\theoremstyle{plain}
\newtheorem{thm}{Theorem}[section]
\newtheorem{lem}[thm]{Lemma}
\newtheorem{cor}[thm]{Corollary}
\newtheorem{prop}[thm]{Proposition}
\newtheorem{conj}[thm]{Conjecture}
\theoremstyle{definition}
\newtheorem{defn}[thm]{Definition}
\newtheorem{rmk}[thm]{Remark}
\newcommand{\im}{{\rm im}}
\newcommand{\sC}{{\mathcal C}}
\newcommand{\sE}{{\mathcal E}}
\newcommand{\sM}{{\mathcal M}}
\newcommand{\sN}{{\mathcal N}}
\newcommand{\sR}{{\mathcal R}}
\newcommand{\F}{{\mathbb F}}
\renewcommand{\P}{{\mathbb P}}
\newcommand{\Q}{{\mathbb Q}}
\newcommand{\U}{{\mathbb U}}
\newcommand{\Z}{{\mathbb Z}}
\newcommand{\res}{{\text{\sf res}\hspace{.1ex} }}
\newcommand*{\QEDB}{\hfill\ensuremath{\square}}
\def\NDT{{\fontencoding{T5}\selectfont Nguy\~ \ecircumflex n Duy T\^an}}
\begin{document}
\title{Triple Massey products vanish over all fields}
\begin{abstract} We show that the absolute Galois group of any field has the vanishing triple Massey product property. Several corollaries for the structure of maximal pro-$p$-quotient  of absolute Galois groups are deduced. Furthermore, the vanishing of some higher Massey products is proved.
\end{abstract}
 \author{ J\'an Min\'a\v{c} and \NDT}
\address{Department of Mathematics, Western University, London, Ontario, Canada N6A 5B7}
\email{minac@uwo.ca}
 \address{Department of Mathematics, Western University, London, Ontario, Canada N6A 5B7 and Institute of Mathematics, Vietnam Academy of Science and Technology, 18 Hoang Quoc Viet, 10307, Hanoi - Vietnam } 
\email{duytan@math.ac.vn}
\thanks{JM is partially supported  by the Natural Sciences and Engineering Research Council of Canada (NSERC) grant R0370A01. NDT is partially supported  by the National Foundation for Science and Technology Development (NAFOSTED) grant 101.04-2014.34}

\maketitle
\section{Introduction}
Let $F$ be a field and $F_s$ a separable closure of $F$. The Galois group $G_F:= {\rm Gal}(F_s/F)$ is called the absolute Galois group of $F$. Every such Galois group is a profinite group. One may ask what special properties absolute Galois groups have among all profinite groups. This is a difficult problem, and at this moment only few properties have been found. However those discovered properties are of great interest and considerable depth. 

In the classical papers \cite{AS1,AS2} published in 1927, E. Artin and O. Schreier developed a theory of real fields, and they showed in particular that the only non-trivial finite subgroups of absolute Galois groups  are groups of order 2. More recently, in some remarkable work M. Rost and V. Voevodsky proved the Bloch-Kato conjecture, thereby establishing a very special property of Galois cohomology of absolute Galois groups. Relatively recently, two new conjectures, the Vanishing $n$-Massey Conjecture and the Kernel $n$-Unipotent Conjecture were proposed ( see \cite{MT1} and \cite{MT2}). These conjectures are based on a number of previous considerations. One motivation is coming from topological considerations. (See \cite{DGMS} and \cite{HW}.) Another motivation is a program to describe various $n$-central series of absolute Galois groups as kernels of simple Galois representations. (See \cite{Ef, EM1,EM2, MSp,Vi}.)

In this paper we shall consider only the special case of the Vanishing $n$-Massey Conjecture when $n=3$ with the exception of Section ~\ref{sec: higher Massey} when we consider $n>3$ as well.
In the papers \cite{MT1, MT2}, the Vanishing $n$-Massey Conjecture was formulated only in the case when the base field $F$ contains a primitive $p$-th root of unity. In this paper we consider a stronger version of this conjecture when there is no condition on the field $F$. 
For the sake of simplicity we shall recall below the definition of $n$-Massey products only in the case $n=3$, and we refer the reader to \cite[Sections 2 and 3]{MT1} for the more general case. See \cite[Definition 3.3]{MT1} for a definition of the vanishing $n$-fold Massey product property. Also see Section~\ref{sec: higher Massey} for reviews of some basic definitions and facts related to $n$-fold Massey products.
\begin{conj}[Vanishing $n$-Massey Conjecture]
\label{conj:vanishing n-Massey}
 Let $p$ be a prime number and $n\geq 3$ an integer. Let $F$ be a field. Then the absolute Galois group $G_F$ of $F$ has the vanishing $n$-fold Massey product property with respect to $\F_p$.
\end{conj}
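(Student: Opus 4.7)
The plan is to reformulate the vanishing of $n$-fold Massey products as a lifting problem for continuous Galois representations into unipotent groups, reduce to a field containing a primitive $p$-th root of unity, and then induct on $n$ with the paper's $n=3$ theorem as the base. Writing $U_{n+1}(\F_p)$ for the upper-triangular unipotent $(n+1)\times(n+1)$ matrices over $\F_p$ and $Z$ for its centre (the top-right entry), a defining system for $\langle\chi_1,\ldots,\chi_n\rangle$ with $\chi_i\in H^1(G_F,\F_p)$ is exactly a continuous homomorphism $\bar\rho\colon G_F\to U_{n+1}(\F_p)/Z$ whose superdiagonal entries realise the $\chi_i$, and the Massey product vanishes iff $\bar\rho$ lifts to $\rho\colon G_F\to U_{n+1}(\F_p)$. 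A prime-to-$p$ corestriction/restriction argument along $F(\mu_p)/F$ for odd $p$, combined with a separate Artin--Schreier analysis of formally real fields for $p=2$, reduces the conjecture to the case $\mu_p\subset F$.

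For the base case $n=3$, working over $F\supset\mu_p$, the hypotheses $\chi_1\cup\chi_2=0=\chi_2\cup\chi_3$ translate via Kummer theory and Merkurjev--Suslin into splittings of two cyclic algebras of degree $p$, hence into explicit norm relations over appropriate cyclic extensions. These norms can be patched into a single Galois extension of $F$ whose Galois group embeds in $U_4(\F_p)$ and which supplies the required $\rho$; this is, in outline, the main theorem of the paper.

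For the inductive step $n\geq 4$ one applies the hypothesis to the sub-tuples $(\chi_1,\ldots,\chi_{n-1})$ and $(\chi_2,\ldots,\chi_n)$, obtains two compatible lifts into $U_n(\F_p)$, and tries to glue them into a single homomorphism into $U_{n+1}(\F_p)$. The obstruction to gluing lives in $H^2(G_F,\F_p)$, which by the Rost--Voevodsky theorem is generated by symbols, so its triviality becomes a statement about the vanishing of a specific class in $K_2^M(F)/p$ determined by the $\chi_i$ and the chosen partial lifts.

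The main obstacle, and the reason Conjecture~\ref{conj:vanishing n-Massey} remains open for $n\geq 4$, is precisely this final gluing step: one must leverage the vanishing of every proper sub-Massey product to force the symbol representing the obstruction to be trivial, but the higher-weight compatibility between Massey products and the norm residue isomorphism needed to do so is not presently available. The $n=3$ argument sidesteps the difficulty by building the lifting extension explicitly rather than by inductive obstruction theory; for $n\geq 4$ one appears to need either a substantial generalisation of that construction or a genuinely new cohomological input, and the paper accordingly restricts itself to partial higher-$n$ results in Section~\ref{sec: higher Massey}.
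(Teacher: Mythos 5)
The statement you were handed is a conjecture, and the paper does not prove it: it establishes only the case $n=3$ (Theorem~\ref{thm:main vanishing}) together with some special higher-fold vanishing results in Section~\ref{sec: higher Massey}. Your closing admission that the inductive gluing step for $n\geq 4$ cannot currently be carried out is therefore the correct assessment; there is no argument in the paper that you are missing. For the part that can be proved, your outline tracks the paper reasonably closely: the reformulation via Dwyer's theorem as a lifting problem into $\U_{n+1}(\F_p)$ modulo its centre is exactly Lemma~\ref{lem:vanishingEP} (stated there for $n=3$), and the reduction to fields containing a primitive $p$-th root of unity is Proposition~\ref{prop: reduction coprime p}, proved by Hoechsmann's lemma together with injectivity of restriction to an open subgroup of index prime to $p$.

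Three corrections of detail. First, for $p=2$ no separate Artin--Schreier analysis of formally real fields is needed: every field of characteristic different from $2$ already contains the primitive square root of unity $-1$, so Theorem~\ref{thm:vanishing} applies directly. What does require separate (but trivial) treatment is $\Char F=p$, which you omit: there the maximal pro-$p$-quotient of $G_F$ is free pro-$p$ and vanishing is automatic. Second, for the base case $n=3$ the paper does not construct the $\U_4(\F_p)$-extension explicitly (that is carried out in the companion paper \cite{MT4}); instead it works directly with defining systems: a Heisenberg extension supplies a cochain $\varphi_{ab}$ with prescribed restriction to $\ker\chi_a$, the class $[(A,B,C)]\in H^2(\Gal(E/F),E^\times)$ is shown via Hilbert~90 and the explicit bicyclic resolution of Section~2 to be a sum of two inflated cyclic classes, and the resulting identity $\langle\chi_a,\chi_b,\chi_c\rangle_\varphi=\chi_a\cup\chi_x+\chi_c\cup\chi_y$ allows one to modify the defining system to reach $0$. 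This is a cohomological reworking of Matzri's cyclic-algebra argument, which is closer to what you sketched. Third, your framing of the $n\geq 4$ obstruction as a symbol in $H^2(G_F,\F_p)$ via Rost--Voevodsky is a reasonable way to explain the difficulty, but note that two lifts of the sub-tuples into $\U_n(\F_p)$ need not be compatible on the overlap, so even setting up a single well-defined obstruction class already requires choices that the vanishing of the proper sub-Massey products does not by itself control; as you say, no proof is available, and the conjecture should be reported as open for $n\geq 4$ rather than proved.
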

In this paper we show that the conjecture is true when $n=3$. We will use the word ``triple'' instead of ``3-fold''.
\begin{thm}[= Theorem~\ref{thm:main vanishing}]
 Let $F$ be any field and $p$ a prime number. Then the absolute Galois group $G_F$ of $F$ has the vanishing triple Massey product property with respect to $\F_p$
\end{thm}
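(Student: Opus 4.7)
My plan is to reduce the general case to the already-known case in which the base field contains a primitive $p$-th root of unity $\zeta_p$; this case has been handled in prior work of the authors (cf.\ \cite{MT1,MT2}) and by others. When $p=2$, or when $\zeta_p \in F$, the statement is known, so the substance of the theorem lies in the remaining case where $p$ is odd and $\zeta_p \notin F$.

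In that case I would set $F' := F(\zeta_p)$ and $\Delta := \Gal(F'/F)$. The arithmetic input is that $|\Delta|$ divides $p-1$ and therefore $p \nmid |\Delta|$, so restriction $\res : H^*(G_F,\F_p) \to H^*(G_{F'},\F_p)^\Delta$ is an isomorphism (with inverse $|\Delta|^{-1}\,\mathrm{cor}$). Given classes $\chi_1,\chi_2,\chi_3 \in H^1(G_F,\F_p)$ with $\chi_1\cup\chi_2=0=\chi_2\cup\chi_3$, I would first restrict them to $G_{F'}$ and invoke the known case there: this produces a continuous homomorphism $\rho' : G_{F'} \to U_4(\F_p)$ whose first superdiagonal realizes $\res(\chi_1),\res(\chi_2),\res(\chi_3)$, witnessing that the triple Massey product $\langle \res(\chi_1),\res(\chi_2),\res(\chi_3)\rangle$ contains $0$. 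The heart of the argument is then to descend $\rho'$ to a continuous homomorphism $\rho : G_F \to U_4(\F_p)$ with the prescribed characters $\chi_1,\chi_2,\chi_3$ on its first superdiagonal.

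For the descent I would proceed by induction along the upper central series of $U_4(\F_p)$. At each stage one is confronted with an $H^1(\Delta, M)$ obstruction, where $M$ is a finite $p$-group module for $\Delta$, and this vanishes because $p\nmid|\Delta|$; the residual freedom in $\rho'$, given by twisting by suitable $1$-cochains, then allows me to make $\rho'$ genuinely $\Delta$-equivariant, and it therefore factors through $G_F$. The main obstacle I anticipate is tracking the cyclotomic twist of the $\Delta$-action on the center of $U_4(\F_p)$ and reconciling it with the $\Delta$-invariance of the diagonal characters $\chi_i$; the vanishing of $\chi_1\cup\chi_2$ and $\chi_2\cup\chi_3$ \emph{over $F$ itself} (not merely over $F'$) is precisely what should allow one to kill the obstruction at the central stage and complete the descent.
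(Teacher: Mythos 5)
Your overall reduction is the same as the paper's: adjoin $\zeta_p$, set $F'=F(\zeta_p)$, and exploit that $[F':F]$ divides $p-1$ and hence is prime to $p$, together with the known vanishing over fields containing $\zeta_p$. But the mechanism you propose for pushing the conclusion back down to $F$ has a genuine gap. You plan to take the homomorphism $\rho'\colon G_{F'}\to \U_4(\F_p)$ produced over $F'$, make it $\Delta$-equivariant by twisting, and conclude that "it therefore factors through $G_F$." This confuses subgroup with quotient: $G_{F'}$ is an \emph{open subgroup} of $G_F$, not a quotient, so the problem is to produce a lift on the larger group $G_F$, not to descend along a projection. Even granting a precise $\Delta$-equivariance statement (which is itself delicate, since $\Delta$ acts on $H^*(G_{F'},\cdot)$ but has no canonical action on cochains or on $\rho'$ itself), the extension $1\to G_{F'}\to G_F\to\Delta\to 1$ need not split, so equivariance of $\rho'$ does not by itself yield any homomorphism defined on $G_F$; the obstructions you would have to control are not the $H^1(\Delta,M)$ groups you name, and your sketch of "induction along the upper central series" does not identify them. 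The paper avoids all of this by never descending $\rho'$: via Dwyer's theorem the vanishing property is equivalent to solving a weak embedding problem with \emph{abelian} kernel $M=\ker(\U_4(\F_p)\to(\F_p)^3)$, Hoechsmann's lemma identifies the obstruction with a single class $\phi^*(\epsilon)\in H^2(G_F,M)$, and since $[G_F:G_{F'}]$ is prime to $p$ and $M$ is a $p$-group, restriction $H^2(G_F,M)\to H^2(G_{F'},M)$ is injective; the solvability over $F'$ kills the restricted class, hence the class itself. That restriction--corestriction step is the correct sharpening of your $|\Delta|^{-1}\mathrm{cor}$ idea, applied to the one class that actually matters rather than to the homomorphism.

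A second, smaller omission: you dismiss the case $p=2$ and $\zeta_p\in F$ as known and treat "$p$ odd, $\zeta_p\notin F$" as the only remaining case, but when $\mathrm{char}\,F=p$ there is no primitive $p$-th root of unity in any extension and the reduction to $F(\zeta_p)$ is vacuous. The paper handles this separately (and easily): in characteristic $p$ the maximal pro-$p$ quotient of $G_F$ is a free pro-$p$ group, so the triple Massey product vanishing is automatic. You should add this case explicitly, and replace the descent-of-$\rho'$ step by the obstruction-class argument (or by an equivalent averaging argument carried out on the level of the single $H^2$ obstruction, where $\Delta$-invariance is meaningful).
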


Below, we briefly recall the definition of triple Massey products and the corresponding vanishing property. 
  Let $G$ be a profinite group and $p$ a prime number. We consider the finite field $\F_p$ as  a trivial discrete $G$-module. Let $\sC^\bullet=(C^\bullet(G,\F_p),\partial,\cup)$ be the differential graded algebra of inhomogeneous continuous cochains of $G$ with coefficients in $\F_p$ (see \cite[Ch.\ I, \S2]{NSW} and \cite[Section 3]{MT1}). We write $H^i(G,\F_p)$ for the corresponding cohomology groups. We denote by $Z^1(G,\F_p)$ the subgroup of $C^1(G,\F_p)$ consisting of all 1-cocycles. Because we use  trivial action on the coefficients $\F_p$, we have $Z^1(G,\F_p)=H^1(G,\F_p)={\rm Hom}(G,\F_p)$. Let $\chi_1,\chi_2,\chi_3$ be elements in $H^1(G,\F_p)$. Assume that 
\[
\chi_1\cup\chi_2=\chi_2\cup\chi_3=0\in H^2(G,\F_p).
\]
In this case we say that the triple Massey product $\langle \chi_1,\chi_2,\chi_3\rangle$ is defined. Then there exist cochains $D_{12}$ and $D_{23}$ in $C^1(G,\F_p)$ such that
\[
\partial D_{12}=\chi_1\cup \chi_2 \; \text{ and } \partial D_{23}=\chi_2\cup\chi_3
\]
in $C^2(G,\F_p)$. Then we say that $D:=\{\chi_1,\chi_2,\chi_3,D_{12},D_{23}\}$ (or sometimes for simplicity $\{D_{12},D_{23}\}$) is a defining system for the triple Massey product $\langle \chi_1,\chi_2,\chi_3\rangle$. Observe that 
\[
\partial (\chi_1\cup D_{23}+ D_{12}\cup\chi_3)=0,
\]
hence $\chi_1\cup D_{23}+D_{12}\cup\chi_3$ is a 2-cocycle. We define the value $\langle \chi_1,\chi_2,\chi_3\rangle_D$ of the triple Massey product $\langle \chi_1,\chi_2,\chi_3\rangle$ with respect to the defining system $D$ to be the cohomology class of $\chi_1\cup D_{23}+\chi_3\cup D_{12}$ in $H^2(G,\F_p)$. The set of all values $\langle \chi_1,\chi_2,\chi_3\rangle_D$ when $D$ runs over the set of all defining systems, is called the triple Massey product 
$\langle \chi_1,\chi_2,\chi_3\rangle \subseteq H^2(G,\F_p)$. If $0\in \langle \chi_1,\chi_2,\chi_3\rangle$, then we say that our triple Massey product vanishes.

\begin{defn}
 We say that $G$ has the {\it vanishing triple Massey product property (with respect to $\F_p$)} if every triple Massey product $\langle \chi_1,\chi_2,\chi_3\rangle$, where $ \chi_1,\chi_2, \chi_3\in H^1(G,\F_p)$, vanishes whenever it is defined.
\end{defn}

The Vanishing 3-Massey Conjecture then claims that for any field $F$ and any prime $p$, the absolute Galois group $G_F$ has the vanishing triple Massey product property. It was proved by  M. Hopkins and K. Wickelgren in \cite{HW} that if $F$ is a global field of characteristic not 2 and $p=2$, then $G_F$ has the vanishing triple Massey product property. In \cite{MT1} it was proved that the result of \cite{HW} is valid for any field $F$. 
In \cite{MT3} it was proved that $G_F$ has  the vanishing triple Massey product property with respect to $\F_p$ for any global field $F$ containing a primitive $p$-th root of unity.
In \cite{EMa}, I. Efrat and E. Matzri provided alternative proofs for the above mentioned  results in \cite{MT1} and \cite{MT3}.
In \cite{Ma}, E. Matzri  proved that for any prime $p$ and for any field $F$  containing a primitive $p$-th root of unity, $G_F$ has the vanishing triple Massey product property. In this paper we shall provide a cohomological proof to the main result in \cite{Ma} (see Theorem~\ref{thm:vanishing}). We also remove the assumption that $F$ contains a primitive  $p$-th root of unity (see Theorem~\ref{thm:main vanishing}). Thus every absolute Galois group has the vanishing triple Massey product property. This is a fundamental
new restriction on absolute Galois groups. See Subsection 4.3 for some significant consequences on the structure of quotients of absolute Galois groups. 
Indeed in Subsection 4.3 we are able to describe some strong restrictions on the shape of relations in Zassenhaus filtration $S_{(2)}$ modulo $S_{(4)}$ for any maximal pro-$p$-quotient of any absolute Galois group $G_F$. This shape of relations excludes the possibility of certain triple commutators occurring in these relations. These are new significant restrictions on the shape of relations in canonical quotients of absolute Galois groups. This description of relations in $S_{(2)}$ modulo $S_{(4)}$ appears to be close to an optimal description.

\raggedbottom
The structure of our paper is as follows. In Section 2 we recall basic material on the cohomology of bicyclic groups. In Section 3 we discuss  Heisenberg extensions. 
 In Section 4, by using the material developed in Sections 2 and 3, we provide an alternative cohomological proof of the main result of \cite{Ma} on the vanishing  of triple Massey products with respect to $\F_p$ when $F$ contains a primitive $p$-th root of unity. 
 Before we received a very nice preprint \cite{Ma} from E. Matzri, we planned to make such a proof but we completed this proof only after we received his preprint. We also want to notice that by referring directly to some results in \cite{Ti} (see Remark~\ref{rmk:Tignol} for a brief explanation), one can avoid using material in Section 2. 
 However we think that Section 2 might be of independent interest. E.Matzri used in his work tools from the theory of central simple algebras. 
 In our paper we use cohomological techniques instead. In Remark~\ref{rmk:modification} we provide yet another short direct variant of the key part of the proof of Theorem~\ref{thm:vanishing}.
Considering the details of our proof makes it possible to prove the vanishing of triple Massey products in a more general setting.
 Consider a formation $\{G,\{G(K)\},N \}$, where $G$ is a profinite group, $\{G(K)\}$ is a collection of open subgroups of $G$ indexed by a set $\Sigma=\{K\}$, and $N$ is a discrete $G$-module. 
 (See \cite[Chapter XI, \S 1]{Se} or \cite[Chapter VI, Section 6.1]{We} for a definition of a formation.)  
 We shall call such a formation $\{G,\{G(K)\},N \}$ a  {\it $p$-Kummer field formation} if it satisfies two axioms:
\begin{enumerate}
\item For each open normal subgroup $G(K)$ of $G$, $H^1(G/G(K),N^{G(K)})=\{1\}$. (Here $N^{G(K)}$ is the set of elements of $N$ which are fixed under the action of every element $\sigma$ in $G(K)$.)
\item There is a short exact sequence of $G$-module
\[
1\longrightarrow \Z/p\Z \longrightarrow N\stackrel{x\mapsto x^p}{\longrightarrow} N\longrightarrow 1,
\]
where $G$ acts trivially on $\Z/p\Z$, and $N$ is written in a multiplicative way.
\end{enumerate}

Recall that axiom (1) above guarantees that each $p$-Kummer field formation is also a field formation. (See \cite[Chapter 6, Section 6.2]{We}.) 
Our main interest is when $G=G_F$, the absolute Galois group of $F$, and $\Sigma$ is the set of all finite separable extensions of $F$, and $N=F_s^\times$. 
But our approach is valid in this more general setting which may have applications in anabelian geometry. Also this approach clarifies the key properties of $G$ which are sufficient for our proofs to go through.
Nearly simultaneously with our arXiv posting of the first version of our paper, I. Efrat and E. Matzri posted \cite{EMa2} on arXiv. The paper \cite{EMa2} is a replacement of \cite{Ma}. In \cite{EMa2}, I. Efrat and E. Matzri also provide  a cohomological approach to Theorem~\ref{thm:vanishing}. 
Their approach has a similar flavor to our proofs  in this paper, but it is still different. We feel that both papers taken together provide a definite complementary insight to the new fundamental property of absolute Galois groups. 
As we mentioned above, in Remark~\ref{rmk:modification} we provide the second alternative proof of the vanishing of  triple Massey products. In this proof we are able to show a specific element of the triple Massey product which vanishes. 
Some results of this paper (e.g. the results on Heisenberg extensions in Subsection 3.2, Lemma~\ref{lem:operators}) have already been used in the construction of important Galois groups. 
Namely in \cite{MT4} we succeeded in extending the crucial ideas in this paper together with further ideas in Galois theory to find explicit constructions of Galois extensions $L/F$ with ${\rm Gal}(L/F)\simeq \U_4(\F_p)$, for all fields $F$ and all primes $p$.
For example Theorem~\ref{thm:modification} and its proof play an important role in finding a crucial submodule of the $E^\times/(E^\times)^p$ where $E:=F(\sqrt[p]{a},\sqrt[p]{c})$, which is needed in our construction of a required Galois extension $L/F$ containing $E/F$ and having Galois group isomorphic to $\U_4(\F_p)$.
In Section 5 we prove the vanishing of $(k+1)$-fold Massey products of the form $\langle \chi_b,\chi_a,\ldots,\chi_a\rangle$ ($k$ copies of $\chi_a$)  and the vanishing of $(k+2)$-fold Massey products of the form $\langle\chi_a,\chi_b,\chi_a,\ldots,\chi_a\rangle$ ($k+1$ copies of $\chi_a$), where $k<p$. 
(See Theorem~\ref{thm:vanishing higher}.) The first vanishing can be deduced also from the results in \cite{Sha}, but the second vanishing appears to be new. (See also \cite[Corollary 3.17]{Wic} for another result on the vanishing of some $n$-fold Massey products.)
\\
\\
{\bf Acknowledgements:}  We are grateful to Jochen G\"{a}rtner and Adam Topaz with whom we initially began correspondence on the vanishing triple Massey products pursuing a different strategy. We are also grateful to Eliyahu Matzri for sending us his beautiful preprint \cite{Ma} shortly before his arXiv posting. Although we did not yet discuss details of our paper with Ido Efrat, Eliyahu Matzri, Danny Neftin and Kirsten Wickelgren, we thank them for their interest and great encouragement.  We are  grateful to an  anonymous referee for his/her  careful reading of our paper and for providing us with insightful comments and valuable suggestions which we used to improve our exposition.

\section{Cohomology of bicyclic groups}
In this section we study the cohomology of cyclic and bicyclic groups.  A number of basic results which we will need subsequently in this paper, are recalled here.  Our main references are \cite[ pages 16-19]{CKM} and \cite[pages 694-697]{Me}.
\subsection{Cohomology of cylic groups}
\label{subsec:cohomology of cyclic groups}
If $G$ is abelian group and $g\in G$ is an element of order n, we denote
\[
D_g:= g-1 \text{ and } N_g:= 1+g+\cdots+g^{n-1}\in \Z[G].
\]

Let $G$ be a cyclic group of order $n$. We choose a generator $s$ of $G$. Recall that $\epsilon \colon \Z[G]\to \Z$ is the augmentation homomorphism with $\epsilon(g)=1$ for all $g\in G$.
Then we have the following resolution of the trivial $G$-module $\Z$:
\[
\cdots \longrightarrow \Z[G] \stackrel{d_1}{\longrightarrow} \Z[G] \stackrel{d_0}{\longrightarrow} \Z[G] \stackrel{\epsilon}{\longrightarrow} \Z \longrightarrow 0,
\]
where $d_i$ is  multiplication by $D_s$ (resp. $N_s$) if $i$ is even (resp. odd).  For any $G$-module $M$, the above resolution determines  a  complex
\[
{\rm Hom}_G(\Z[G],M)\stackrel{d_0^*}{\longrightarrow} {\rm Hom}_G(\Z[G],M)\stackrel{d_1^*}{\longrightarrow} {\rm Hom}_G(\Z[G],M)\longrightarrow \cdots
\]
We make the natural identification ${\rm Hom}_G(\Z[G],M)=M$. Then the above complex becomes 
\[
M \stackrel{d_0^*}{\longrightarrow} M \stackrel{d_1^*}{\longrightarrow} M \stackrel{d_2^*}{\longrightarrow} M\longrightarrow \cdots
\]
This implies in particular that 
\[
H^{2}(G,M) \simeq \hat{H}^0(G,M):=\ker D_s/\im N_s = M^s/M_sA. 
\]
As explained in \cite[Chapter VIII, \S 4]{Se}, the above isomorphism does depend on the choice of generator $s$ and can be described as below. 
The choice of $s$ defines a homomorphism $\chi^s\colon G\to \Q/\Z$ such that $\chi^s(s)=1/n$. The  coboundary $\delta\colon H^1(G,\Q/\Z)\to H^2(G,\Z)$ associated to the short exact sequence of trivial $G$-modules
\[
0\to \Z\to \Q\to \Q/\Z\to 0,
\]
sends $\chi^s$ to an element $\theta_s=\delta \chi^s\in H^2(G,\Z)$. Then the isomorphism 
\[
M^s/N_sM \simeq H^2(G,M),
\]
is the map which sends $x\in M$ to $x\cup \delta\chi^s$.

\subsection{Cohomology of bicyclic groups}
\label{subsec:bicyclic}
Let $G$ be a bicyclic group. We choose two generators $s$, of order $m$, and $t$, of order $n$. 

We define a chain complex $L_\bullet=(L_i)$ as follows: $L_i=\Z[G]^{i+1}$ for all $i\geq 0$, and $d_i\colon L_{i+1}\to L_{i}$ are defined by the following conditions
\[
\begin{aligned}
&d_{2i} e_{2j}& =&N_s e_{2j-1}+D_t e_{2j},\\
&d_{2i}e_{2j+1}& = &D_s e_{2j}- N_t e_{2j+1},\\
&d_{2i+1} e_{2j} &=&N_s e_{2j-1}+N_t e_{2j},\\
&d_{2i+1}e_{2j+1}& =& D_s e_{2j}-D_t e_{2j+1},
\end{aligned}
\]
here, for convenience, we put $e_{-1}=0$ and $(e_0,\ldots,e_i)$ is the canonical basis of $L_i=\Z[G]^{i+1}$.
Then we obtain a free resolution of the trivial $G$-module $\Z$:
\[
\tag{1} \cdots\longrightarrow \Z[G]^4 \stackrel{d_2}{\longrightarrow} \Z[G]^3 \stackrel{d_1}{\longrightarrow} \Z[G]^2 \stackrel{d_0}{\longrightarrow} \Z[G] \stackrel{\epsilon}{\longrightarrow} \Z \longrightarrow 0.
\]

We define $I:=\ker\epsilon$, the augmentation ideal of $\Z[G]$; and $J:=\ker d_0$. Then we obtain the following exact sequence of $\Z[G]$-modules
\[
\tag{2} 0 \longrightarrow J\longrightarrow \Z[G]^2 \stackrel{f}{\longrightarrow} I\longrightarrow 0,
\]
where $f(x,y)=d_0(x,y)= D_t x+ D_sy$. We also consider the following exact sequence
\[
\tag{3} 0 \longrightarrow \Z[G]/\Z N_G\stackrel{g}{\longrightarrow} J \stackrel{h}{\longrightarrow} \Z^2\longrightarrow 0,
\]
where $N_G=\sum_{\sigma\in G} \sigma$, $g(x+\Z N_G)=(D_tx,-D_sx)$ and $h(x,y)=(\epsilon(x)/n,\epsilon(y)/m)$.

Now let $M$ be any $G$-module. The resolution (1) yields the following complex
\[
{\rm Hom}_G(\Z[G],M)\stackrel{d_0^*}{\to} {\rm Hom}_G(\Z[G]^2,M)\stackrel{d_1^*}{\to} {\rm Hom}_G(\Z[G]^3,M) \stackrel{d_2^*}{\to} {\rm Hom}_G(\Z[G]^4,M)\to \cdots
\]
We make the natural identifications ${\rm Hom}_G(\Z[G]^i,M)=M^i$. Then the above complex becomes 
\[
M \stackrel{d_0^*}{\longrightarrow} M^2 \stackrel{d_1^*}{\longrightarrow} M^3 \stackrel{d_2^*}{\longrightarrow} M^4\longrightarrow \cdots
\]
The explicit descriptions of some maps $d_i^*$ in matrix form are given below: 
\[
d_0^*=\begin{bmatrix}
D_t\\
D_s
\end{bmatrix},
d_1^*=\begin{bmatrix}
N_t&0\\
D_s &-D_t\\
0&N_s
\end{bmatrix},
d_2^*=\begin{bmatrix}
D_t& 0 & 0 \\
D_s &-N_t & 0\\
0 &N_s & D_t\\
0& 0 & D_s
\end{bmatrix}.
\]
In particular we have
\[
\begin{aligned}
Z^2(G,M):=&\ker d_2^*=\{(x,y,z)\in M^3\mid D_t(x)=0,D_s(z)=0, D_s(x)=N_t(y), D_t(z)=-N_s(z)\},\\
B^2(G,M):=&\im d_1^*=\{(x,y,z)\in A^3\mid  \exists (c,d)\in A^2: x=N_t(c),y=D_s(c)-D_t(d), z=N_s(d)\},\\
H^2(G,M)\simeq &Z^2(G,M)/B^2(G,M).
\end{aligned}
\]

The exact sequences (2) and (3) yield the following commutative diagram
\[
\tag{*}
\xymatrix{
&{\rm Hom}_G(\Z^2,M) \ar@{->}[d] \ar@{->}[rd]^{u}\\
{\rm Hom}_G(\Z[G]^2,M) \ar@{->}[r] \ar@{->}[rd]^{v}&{\rm Hom}_G(J,M) \ar@{->}[r] \ar@{->}[d] &{\rm Ext}_G^1(I,M) \ar@{->}[r] &0={\rm Ext}_G^1(\Z[G]^2,M) \\
&{\rm Hom}_G(\Z[G]/\Z N_G, M) \ar@{->}[d] \\
&{\rm Ext}_G^1(\Z^2,M)
}.
\]
This diagram implies that we have a natural injection 
\[
\eta\colon {\rm coker}(u)\hookrightarrow {\rm coker}(v).
\]
Note that $\eta$ is an isomorphism if and only if $H^1(G,M)=0$ since ${\rm Ext}_G^1(\Z^2,M)=H^1(G,M)^2$. Under the natural identifications 
\[{\rm Hom}_G(\Z^2,M)=(M^G)^2\; \text{ and } {\rm Hom}_G(\Z[G]^i,M)=M^i,\] 
we shall describe explicitly all objects and maps in the diagram.

First we have
\[
{\rm Hom}_G(\Z[G]/\Z N_G, M) =\{x\in M\mid N_Gx=0\}=:\,_{N_G}M.
\]
The map $v$ becomes $v \colon M^2\longrightarrow \,_{N_G}M$, which sends $(x,y)$ to $D_tx -D_sy$. This follows from the observation that the map $v$ is obtained by applying the functor ${\rm Hom}_{G}(\cdot, M)$  to the composite $\Z[G]/\Z N_G \to J\to \Z[G]^2$, which maps $x\mod \Z N_G$ to $(D_tx,-D_sx)$.

On the other hand, the surjection $d_1\colon \Z[G]^3\twoheadrightarrow J$ yields an injection ${\rm Hom}_G(J,M)\hookrightarrow {\rm Hom}_G(\Z[G]^3,M)=M^3$. If we identify ${\rm Hom}_G(J,M)$ with its image in $M^3$, then 
\[ {\rm Hom}_G(J,M)=\ker d_2^*=:Z^2(G,M), \text{ and } {\rm Ext}^1_G(I,M)=\ker d_2^*/\im d_1^*=H^2(G,M).\]
The map $u$ becomes $u\colon (M^G)^2\longrightarrow H^2(G,M)$, which can be described explicitly as follows. We consider the composite
\[
\varphi\colon \Z[G]^3\stackrel{d_1}{\longrightarrow}  J\stackrel{h}{\longrightarrow} \Z^2.
\]
Then 
\[
\begin{aligned}
\varphi(x,y,z)&= h(N_tx+D_sy,-D_ty+N_sz)\\
&=(\epsilon(N_tx+D_sy)/n, \epsilon(-D_ty+N_sz)/m)\\
&=(\epsilon(x),\epsilon(z)).
\end{aligned}
\]
The Hom-dual of $\varphi$ is the map 
\[
\varphi^*\colon (M^G)^2={\rm Hom}_G(\Z^2,M)\longrightarrow {\rm Hom}_G(J,M)\hookrightarrow {\rm Hom}_G(\Z[G]^3,M)=M^3,\] which is given by $\varphi^*(x,z)=(x,0,z)$. The map $u$ is then given by 
\[
u(x,z)=[(x,0,z)],
\]
where $[(x,0,z)]$ is the class of $(x,0,z)$ in $H^2(G,M)$. 

Let $\sigma\in G/\langle t\rangle$ denote the class of $s$ modulo $\langle t\rangle $. Then $G/\langle t\rangle=\langle \sigma\rangle$ is of order $m$. We have the natural identification $\hat{H}^0(G/\langle t\rangle,M^t)=M^G/N_\sigma(M^t)=H^2(G/\langle t\rangle,M)$ by identifying $a$ with the cup product $a\cup \theta_\sigma$. (See Subsection~\ref{subsec:cohomology of cyclic groups}.) 
Let $\chi^s\colon G\to \Q/\Z$ be a homomorphism such that $\chi^s(s)=1/n$, $\chi^s(t)=0$, and set $\theta_s:=\delta \chi^s\in H^2(G,\Z)$. 
Then we have
\[
[(0,0,z)]= z \cup\theta_s=z\cup \delta \chi^s\in H^2(G,M).
\]
This follows from an observation in \cite[page 18]{CKM} that the inflation map 
\[ {\rm inf}_{G/\langle t\rangle}\colon H^2(G/\langle t\rangle,M)\hookrightarrow H^2(G,M),\]
 is given by
\[
{\rm inf}_{G/\langle t\rangle}(z)=[(0,0,z)].
\]
(Note also that $\chi^s={\rm inf}_{G/\langle t\rangle}(\chi^\sigma)$ and $\theta_s={\rm inf}_{G/\langle t\rangle}\theta_\sigma$.) 

Similarly, we have
\[
[(x,0,0)]= x \cup\theta_t=x\cup \delta \chi^t\in H^2(G,M).
\]
Here $\chi^t\colon G\to \Q/\Z$ is a homomorphism such that $\chi^t(t)=1/n$, $\chi^t(s)=0$, and $\theta_t:=\delta \chi^t\in H^2(G,\Z)$. Therefore
\[
u(x,z)= x\cup\delta\chi^t+z\cup\delta\chi^s.
\]

Observe that we have the following commutative diagram
\[
\xymatrix{
\Z[G] \ar@{->}[r]^{x\mapsto (0,x,0)} \ar@{->>}[d]&\Z[G]^3\ar@{->>}[d]^{d_1}\\
\Z[G]/\Z[G] N_G \ar@{->}^-g[r] &J.
}
\]

This induces the following commutative diagram

\[ 
\xymatrix@C=2cm{
{\rm Hom}_G(\Z[G]^3,M)=M^3 \ar@{->}[r]^{(x,y,z)\mapsto y} &{\rm Hom}_G(\Z[G],M)=M\\
{\rm Hom}_G(J,M)=Z^2(G,M) \ar@{->}[r] \ar@{^{(}->}[u] &{\rm Hom}_G(\Z[G]/\Z N_G,M)=\,_{N_G}M \ar@{^{(}->}[u]
}
\]
In summary, under the identifications  ${\rm Hom}_G(\Z[G]^i,M)=M^i$,  ${\rm Hom}_G(J,M)=Z^2(G,M)$, ${\rm Ext}^1_G(I,M)=Z^2(G,M)/B^2(G,M)=H^2(G,M)$, etc., the diagram (*) becomes
\[
\tag{**}
\xymatrix{
&(M^G)^2 \ar@{->}[d] \ar@{->}[rd]^{u}\\
M^2 \ar@{->}[r] \ar@{->}[rd]^{v}&Z^2(G,M) \ar@{->}[r] \ar@{->}[d] &H^2(G,M) \ar@{->}[r] &0 \\
&\,_{N_G} M \ar@{->}[d] \\
&H^1(G,M)^2
}.
\]
Here  
\[ 
\begin{aligned}
v(x,y)&=D_tx -D_sy.\\
u(x,z)&= x\cup\delta\chi^t + z\cup\delta\chi^s.
\end{aligned}
\]
The natural injection  $\eta \colon {\rm coker}(u) \hookrightarrow {\rm coker}(v)$ is given by
\[
\eta([(x,y,z)])=[y].
\]
\section{Heisenberg extensions}
\subsection{Norm residue symbols}
Let $F$ be a field containing a primitive $p$-th root of unity $\xi$. 
For any element $a$ in $F^\times$, we shall write $\chi_a$  for the  character corresponding to $a$ via the Kummer map $F^\times\to H^1(G_F,\Z/p\Z)={\rm Hom}(G_F,\Z/pZ)$.  From now on we assume that $a$ is not in $(F^\times)^p$. The extension $F(\sqrt[p]{a})/F$ is a Galois extension with Galois group $\langle \sigma_a\rangle\simeq \Z/p\Z$, where $\sigma_a$ satisfies $\sigma_a(\sqrt[p]{a})=\xi\sqrt[p]{a}$. 

The character $\chi_a$ defines a homomorphism $\chi^a\in {\rm Hom}(G_F,\frac{1}p\Z/\Z)\subseteq {\rm Hom}(G_F,\Q/\Z)$ by the formula
\[
\chi^a =\frac{1}{p} \chi_a. 
\]
Let $b$ be any element in $F^\times$.  Then the norm residue symbol can be defined to be
\[
(a,b):= (\chi^a,b):= b\cup \delta \chi^a.
\]

The cup product $\chi_a\cup \chi_b\in H^2(G_F,\Z/p\Z)$ can be interpreted as the norm residue symbol $(a,b)$. More precisely, we consider the exact sequence
\[
0\longrightarrow \Z/p\Z \longrightarrow  F_s^\times \stackrel{x\mapsto x^p}{\longrightarrow} F_s^\times \longrightarrow 1,
\]
where $\Z/p\Z$ has been identified with the group of $p$-th roots of unity $\mu_p$ via the choice of $\xi$. As $H^1(G_F,F_s^\times)=0$, we obtain
\[
0{\longrightarrow} H^2(G_F,\Z/p\Z)\stackrel{i}{\longrightarrow} H^2(G_F,F_s^\times) \stackrel{\times p}{\longrightarrow} H^2(G_F,F_s^\times).
\]
Then one has $i(\chi_a\cup \chi_b)=(a,b)\in H^2(G_F,F_s^\times)$. (See \cite[Chapter XIV, Proposition 5]{Se}.)  The following fact (\cite[Chapter XIV, Proposition 2]{Se}) will also  be used frequently in the sequel.
\begin{prop}
\label{prop:killed by p-cyclic extension}
 We have
\[
\ker\left( H^2(G_F,F_s^\times) \stackrel{\res_{\ker\chi_a}}{\longrightarrow} H^2(G_{F(\sqrt[p]{a})},F_s^\times)\right)=\{(a,b)\mid b\in F^\times\}.
\]
\end{prop}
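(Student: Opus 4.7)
The plan is to compute the kernel via the inflation-restriction sequence. Set $E := F(\sqrt[p]{a})$ and $H := G_E = \ker\chi_a$, so that $H$ is an open normal subgroup of $G_F$ with quotient $G_F/H = \Gal(E/F) = \langle\sigma_a\rangle$ cyclic of order $p$. Hilbert~90 gives $H^1(G_E,F_s^\times)=0$, and the inflation-restriction exact sequence then yields exactness of
\[
0 \longrightarrow H^2(\Gal(E/F), E^\times) \xrightarrow{\mathrm{inf}} H^2(G_F,F_s^\times) \xrightarrow{\res} H^2(G_E,F_s^\times),
\]
so $\ker(\res) = \mathrm{inf}\bigl(H^2(\Gal(E/F), E^\times)\bigr)$. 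This reduces the problem to describing $H^2$ of a cyclic group of order $p$ and tracking its image under inflation.

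Next, since $\Gal(E/F)$ is cyclic of order $p$, the description of cyclic cohomology recalled in Subsection~\ref{subsec:cohomology of cyclic groups} provides the periodicity isomorphism
\[
F^\times/N_{E/F}(E^\times) = \hat H^0(\Gal(E/F), E^\times) \xrightarrow{\sim} H^2(\Gal(E/F), E^\times), \qquad b \longmapsto b \cup \delta\chi^a,
\]
where $\chi^a \in \Hom(\Gal(E/F),\tfrac{1}{p}\Z/\Z)$ sends $\sigma_a$ to $1/p$. Thus every class in $H^2(\Gal(E/F), E^\times)$ is represented by $b\cup\delta\chi^a$ for some $b\in F^\times$.

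Finally, since $\chi^a$ regarded as a character of $G_F$ is the inflation of the character of $\Gal(E/F)$ of the same name, and since inflation commutes both with cup products and with the connecting map $\delta\colon H^1(\,\cdot\,,\Q/\Z)\to H^2(\,\cdot\,,\Z)$, the inflation of $b\cup\delta\chi^a$ to $H^2(G_F,F_s^\times)$ is precisely $b\cup\delta\chi^a=(a,b)$. Composing the two identifications yields $\ker(\res)\subseteq\{(a,b):b\in F^\times\}$. The reverse inclusion is immediate: $\chi^a$ restricts trivially to $G_E$, so $(a,b)|_{G_E}=b\cup\delta(\chi^a|_{G_E})=0$. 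The only genuinely delicate point is the collapse of inflation-restriction via Hilbert~90; once that is in place, the rest is bookkeeping that matches the cup-product representatives to the defining formula of the norm residue symbol.
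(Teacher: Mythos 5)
Your proof is correct and is essentially the argument behind the paper's treatment: the paper does not prove this proposition itself but cites Serre (\emph{Local Fields}, Ch.~XIV, Prop.~2), and the proof there is exactly your route — Hilbert 90 plus inflation--restriction to identify the kernel with $H^2({\rm Gal}(E/F),E^\times)$, then the cyclic periodicity isomorphism $F^\times/N_{E/F}E^\times \simeq H^2({\rm Gal}(E/F),E^\times)$ via cup product with $\delta\chi^{\sigma_a}$, and compatibility of inflation with cup products and $\delta$ to land on $(a,b)=b\cup\delta\chi^a$. No gaps.
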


\subsection{Heisenberg extensions}
In this subsection we provide a short alternative version of some material in \cite[Section 5]{Ma}. (See also \cite[Chapter 2, Section 2.4]{Sha1}.)

Assume that $a,b$ are elements in $F^\times$, which are linearly independent modulo $(F^\times)^p$. Let $K= F(\sqrt[p]{a},\sqrt[p]{b})$. Then $K/F$ is a Galois extension whose Galois group is generated by $\sigma_a$ and $\sigma_b$. Here $\sigma_a(\sqrt[p]{b})=\sqrt[p]{b}$, $\sigma_a(\sqrt[p]{a})=\xi \sqrt[p]{a}$; $\sigma_b(\sqrt[p]{a})=\sqrt[p]{a}$, $\sigma_b(\sqrt[p]{b})=\xi \sqrt[p]{b}$. 

Let $\U_3(\Z/p\Z)$ be the group of all upper-triangular unipotent $3\times 3$-matrix with entries in $\Z/p\Z$. We consider a map $\U_3(\Z/p\Z)\to (\Z/p\Z)^2$ which sends $\begin{bmatrix} 
1 & x & z\\
0 & 1 & y\\
0 & 0 & 1 
\end{bmatrix}$
to $(x,y)$. Then we have the following embedding problem

\[
 \xymatrix{
& & &G_F \ar@{->}[d]^{\bar\rho} \\
0\ar[r]& \Z/p\Z \ar[r] &\U_3(\Z/p\Z)\ar[r] &(\Z/p\Z)^2\ar[r] &1,
}
\]
where $\bar\rho$ is the map $(\chi_a,\chi_b)\colon G_F\to {\rm Gal}(K/F)\simeq (\Z/p\Z)^2$. (The last isomorphism ${\rm Gal}(K/F)\simeq (\Z/p\Z)^2$ is the one which sends $\sigma_a$ to $(1,0)$ and $\sigma_b$ to $(0,1)$.)

Assume that $\chi_a\cup \chi_b=0$. Then the norm residue symbol $(a,b)$ is trivial. Hence there exists $\alpha$ in $F(\sqrt[p]{a})$ such that $N_{F(\sqrt[p]{a})/F}(\alpha)=b$ (see  \cite[Chapter XIV, Proposition 4 (iii)]{Se}). We set 
\[
A_0=\alpha^{p-1} \sigma_a(\alpha^{p-2})\cdots \sigma_a^{p-2}(\alpha)=\prod_{i=0}^{p-2} \sigma_a^{i}(\alpha^{p-i-1}) \in  F(\sqrt[p]{a}).
\]
\begin{lem} Let $f_a$ be an element in $F^\times$. Let $A=f_aA_0$. Then we have
\label{lem:operator}
\[
\frac{\sigma_a(A)}{A}=\frac{N_{F(\sqrt[p]{a})/F}(\alpha)}{\alpha^p}=\frac{b}{\alpha^p}.
\]
\end{lem}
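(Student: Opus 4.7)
The plan is to reduce the identity to a direct computation on the product defining $A_0$, since the factor $f_a$ plays no role.

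First I would observe that $f_a\in F^\times$ is fixed by $\sigma_a$, so
\[
\frac{\sigma_a(A)}{A}=\frac{\sigma_a(f_aA_0)}{f_aA_0}=\frac{\sigma_a(A_0)}{A_0}.
\]
Thus it suffices to establish the identity with $A_0$ in place of $A$. Setting $\beta_i:=\sigma_a^i(\alpha)$ for $0\le i\le p-1$ (noting $\beta_p=\beta_0$ since $\sigma_a$ has order $p$), I would rewrite the defining expression as
\[
A_0=\prod_{i=0}^{p-2}\beta_i^{\,p-1-i}.
\]

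Next I would apply $\sigma_a$ termwise to shift the index:
\[
\sigma_a(A_0)=\prod_{i=0}^{p-2}\beta_{i+1}^{\,p-1-i}=\prod_{j=1}^{p-1}\beta_j^{\,p-j}.
\]
Forming the quotient $\sigma_a(A_0)/A_0$ and collecting exponents of each $\beta_j$ telescopes: the exponent of $\beta_0$ becomes $-(p-1)$, while for each $1\le j\le p-1$ the exponent is $1$. Therefore
\[
\frac{\sigma_a(A_0)}{A_0}=\beta_0^{-(p-1)}\prod_{j=1}^{p-1}\beta_j=\frac{\prod_{j=0}^{p-1}\sigma_a^j(\alpha)}{\alpha^{p}}=\frac{N_{F(\sqrt[p]{a})/F}(\alpha)}{\alpha^{p}}.
\]

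Finally I would invoke the choice of $\alpha$, namely $N_{F(\sqrt[p]{a})/F}(\alpha)=b$, which was guaranteed by the hypothesis $(a,b)=0$ together with Hilbert 90 for the cyclic extension $F(\sqrt[p]{a})/F$ (cf.\ the cited \cite[Ch.~XIV, Prop.~4(iii)]{Se}). Substituting this identity on the right-hand side yields the desired equality $\sigma_a(A)/A=b/\alpha^p$.

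The only genuine obstacle is the bookkeeping of exponents in the telescoping step; once the index shift is performed correctly all cancellations are automatic, so no deeper input than $\sigma_a^p=\id$ and the norm property of $\alpha$ is required.
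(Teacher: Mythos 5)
Your proof is correct and takes essentially the same route as the paper: after discarding $f_a$ (which is fixed by $\sigma_a$), the paper encodes your telescoping of exponents compactly as the identity $(s-1)\sum_{i=0}^{p-2}(p-i-1)s^{i}=\sum_{i=0}^{p-1}s^{i}-p\,s^{0}$ applied multiplicatively to $\alpha$, which is precisely your index-shift computation. The final substitution $N_{F(\sqrt[p]{a})/F}(\alpha)=b$ is simply the standing choice of $\alpha$ made before the lemma (via the vanishing of the norm residue symbol $(a,b)$), as you note.
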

\begin{proof}Observe that $\dfrac{\sigma_a(A)}{A}=\dfrac{\sigma_a(A_0)}{A_0}$.
The lemma then follows from the identity
\[
(s-1)\sum_{i=0}^{p-2} (p-i-1)s^{i} = \sum_{i=0}^{p-1} s^i -p s^0.
\qedhere
\]
\end{proof}

For any representation $\rho\colon G_F\to \U_3(\Z/p\Z)$ and $1\leq i< j\leq 3$, let $\rho_{ij}\colon G_F\to \Z/p\Z$ be the composition of $\rho$ with the projection from $\U_3(\Z/p\Z)$ to its $(i,j)$-coordinate.
\begin{prop}
\label{prop:Heisenberg extension} Assume that $\chi_a\cup \chi_b=0$. Let $f_a$ be an element in $F^\times$. Let  $A=f_aA_0$ be defined as above. Then the homomorphism $\bar{\rho}:= (\chi_a,\chi_b)\colon G_F\to \Z/p\Z\times \Z/p\Z$ lifts to a Heisenberg extension $\rho\colon G_F\to \U_3(\Z/p\Z)$ with ${\rm res}_{\ker\chi_a}(\rho_{13})=\chi_{A}$. 
\end{prop}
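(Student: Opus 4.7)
My plan is to construct $\rho$ explicitly by analysing the action of $G_F$ on a fixed $p$-th root $\sqrt[p]{A}\in F_s$. For $\tau\in G_F$ write $k:=\chi_a(\tau)\in\{0,\ldots,p-1\}$ and $E:=F(\sqrt[p]{a})$; since $\tau|_E=\sigma_a^k$, iterating Lemma~\ref{lem:operator} yields
\[
\tau(A)=A\cdot b^k/\beta_k^p,\qquad \beta_k:=\prod_{i=0}^{k-1}\sigma_a^i(\alpha)\in E.
\]
Passing to $K=F(\sqrt[p]{a},\sqrt[p]{b})$ where $b=(\sqrt[p]{b})^p$ rewrites this as $\tau(A)=A\cdot(\sqrt[p]{b}^{\,k}/\beta_k)^p$; taking $p$-th roots then defines a cochain $c\colon G_F\to\F_p$ by
\[
\tau(\sqrt[p]{A})=\xi^{c(\tau)}\sqrt[p]{A}\cdot\sqrt[p]{b}^{\,k}/\beta_k.
\]

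The main step will be to determine the cocycle identity satisfied by $c$. Comparing the two ways of expanding $(\tau_1\tau_2)(\sqrt[p]{A})$ and using $\tau_1(\sqrt[p]{b})=\xi^{\chi_b(\tau_1)}\sqrt[p]{b}$ together with $\tau_1(\beta_{k_2})=\sigma_a^{k_1}(\beta_{k_2})$ (because $\beta_{k_2}\in E$), the crucial bookkeeping identity
\[
\beta_{k_1}\sigma_a^{k_1}(\beta_{k_2})=b^{\lfloor(k_1+k_2)/p\rfloor}\beta_{(k_1+k_2)\bmod p}
\]
(which rests on $N_{E/F}(\alpha)=b$, and exactly cancels the surplus factor of $b$ appearing in $\sqrt[p]{b}^{\,k_1+k_2}$) leads to
\[
c(\tau_1\tau_2)-c(\tau_1)-c(\tau_2)=\chi_b(\tau_1)\chi_a(\tau_2).
\]
Combined with the standard cochain identity $\partial(\chi_a\chi_b)=\chi_a\cup\chi_b+\chi_b\cup\chi_a$ (where $\chi_a\chi_b$ denotes the pointwise product), the cochain $D:=\chi_a\chi_b-c$ satisfies $\partial D=\chi_a\cup\chi_b$, and therefore
\[
\rho(\tau):=\begin{bmatrix}1&\chi_a(\tau)&D(\tau)\\0&1&\chi_b(\tau)\\0&0&1\end{bmatrix}
\]
is a homomorphism $G_F\to\U_3(\F_p)$ lifting $\bar\rho=(\chi_a,\chi_b)$.

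For the restriction property, evaluate $D$ on $G_E=\ker\chi_a$. The pointwise product $\chi_a\chi_b$ vanishes there, and for $\tau\in G_E$ the defining formula collapses to $\tau(\sqrt[p]{A})=\xi^{c(\tau)}\sqrt[p]{A}$, so Kummer theory over $E$ identifies $c|_{G_E}$ with the character $\chi_A$. This yields $\mathrm{res}_{\ker\chi_a}(\rho_{13})=\chi_A$, as required. The main technical obstacle is the cocycle bookkeeping in the middle step---specifically, the clean cancellation between the excess powers of $b$ appearing in $\sqrt[p]{b}^{\,k_1+k_2}$ and in the product $\beta_{k_1}\sigma_a^{k_1}(\beta_{k_2})$, which is ultimately a reflection of the norm condition $N_{E/F}(\alpha)=b$.
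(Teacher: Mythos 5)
Your cochain-level construction is sound as far as the lifting goes, and it is a genuinely different route from the paper's: the paper builds the field $L=K(\sqrt[p]{A})$ with $K=F(\sqrt[p]{a},\sqrt[p]{b})$, chooses extensions $\tilde\sigma_a,\tilde\sigma_b$ of $\sigma_a,\sigma_b$, checks the relations, and transports an isomorphism ${\rm Gal}(L/F)\simeq \U_3(\Z/p\Z)$ back to $G_F$, whereas you never name ${\rm Gal}(L/F)$: your cochain $c$ (which factors through ${\rm Gal}(L/F)$, so the two arguments are close relatives) is shown, via the norm identity $\beta_{k_1}\sigma_a^{k_1}(\beta_{k_2})=b^{\lfloor (k_1+k_2)/p\rfloor}\beta_{(k_1+k_2)\bmod p}$, to have defect $c(\tau_1\tau_2)-c(\tau_1)-c(\tau_2)=\chi_b(\tau_1)\chi_a(\tau_2)$, and $D=\chi_a\chi_b-c$ then has defect $\chi_a(\tau_1)\chi_b(\tau_2)$, so your matrix $\rho$ is a continuous homomorphism lifting $(\chi_a,\chi_b)$ (continuity because $c$ factors through a finite quotient; surjectivity onto $\U_3(\Z/p\Z)$, if one wants ``Heisenberg extension'' literally, follows since $a,b$ are independent modulo $p$-th powers and any subgroup of $\U_3(\Z/p\Z)$ mapping onto the quotient $(\Z/p\Z)^2$ is the whole group). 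What this buys is an explicit formula $\rho_{13}=\chi_a\chi_b-c$ in place of the paper's generator-and-relation bookkeeping.

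The gap is the final assertion. You correctly obtain $c|_{\ker\chi_a}=\chi_A$, but your $\rho_{13}$ is $D=\chi_a\chi_b-c$, which on $\ker\chi_a$ equals $-\chi_A=\chi_{A^{-1}}$, not $\chi_A$; the stated conclusion does not follow from your own formulas. Nor can it be repaired by tweaking the lift: any two lifts of $(\chi_a,\chi_b)$ have $(1,3)$-entries differing by a character of $G_F$, so ${\rm res}_{\ker\chi_a}(\rho_{13})$ is determined modulo restrictions from $F$, and $\chi_A$ and $-\chi_A$ differ by $2\chi_A$, which is in general not such a restriction. (In fairness, a careful pass through the paper's own proof yields the same sign: the displayed formulas give $\tilde\sigma_b\tilde\sigma_a=c_0\,\tilde\sigma_a\tilde\sigma_b$ with $c_0(\sqrt[p]{A})=\xi\sqrt[p]{A}$, hence $\tilde\sigma_A=\tilde\sigma_a\tilde\sigma_b\tilde\sigma_a^{-1}\tilde\sigma_b^{-1}=c_0^{-1}$ acts on $\sqrt[p]{A}$ by $\xi^{-1}$, so that construction also gives ${\rm res}_{\ker\chi_a}(\rho_{13})=-\chi_A$ up to a multiple of ${\rm res}_{\ker\chi_a}(\chi_b)$; the sign is harmless later, since Corollary 3.4 and all of Section 4 are insensitive to replacing $A$ by $A^{-1}$.) So, as written, your argument proves the proposition with $\chi_A$ replaced by $\chi_{A^{-1}}$; to present it as a proof of the literal statement you must confront this sign explicitly rather than assert ${\rm res}_{\ker\chi_a}(\rho_{13})=\chi_A$ from computations that give its negative.
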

\begin{proof} 
From $\sigma_a(A)/A= b/\alpha^p\in (K^\times)^p$, and $\sigma_b(A)=A$, we see that $\sigma(A)/A \in (K^\times)^p$ for every $\sigma \in {\rm Gal}(K/F)$. This implies that the extension $L:=K(\sqrt[p]{A})/F$ is Galois. Let $\tilde{\sigma}_a\in {\rm Gal}(L/F)$ (resp. $\tilde{\sigma}_b\in {\rm Gal}(L/F)$) be an extension of $\sigma_a$ (resp. $\sigma_b$). Since $\sigma_b(A)=A$, we have $\tilde{\sigma}_b(\sqrt[p]{A})=\xi^j\sqrt[p]{A}$, for some $j\in \Z$. Hence $\tilde{\sigma}_b^p(\sqrt[p]{A})=\sqrt[p]{A}$. This implies that $\tilde{\sigma}_b$ is of order $p$.

On the other hand, we have
\[
\tilde{\sigma}_a(\sqrt[p]{A})^p=\sigma_a(A) =A \frac{b}{\alpha^p}.
\]
Hence $\tilde{\sigma}_a(\sqrt[p]{A})=\xi^i \sqrt[p]{A}\frac{\sqrt[p]{b}}{\alpha}$, for some $i\in \Z$. Then
\[
\tilde{\sigma}_a^p(\sqrt[p]{A})= \sqrt[p]{A} \frac{b}{N_{F(\sqrt[p]{a})/F}(\alpha)}=\sqrt[p]{A}.
\]
This implies that $\tilde{\sigma}_a$ is also of order $p$. We have
\[
\begin{aligned}
\tilde{\sigma}_a\tilde{\sigma}_b(\sqrt[p]{A}) &= \tilde{\sigma}_a(\xi^j\sqrt[p]{A})=\xi^{i+j}\sqrt[p]{A}\frac{\sqrt[p]{b}}{\alpha},\\
\tilde{\sigma}_b\tilde{\sigma}_a(\sqrt[p]{A}) &=\tilde{\sigma}_b(\xi^i \sqrt[p]{A}\frac{\sqrt[p]{b}}{\alpha})=\xi^{i+j}\sqrt[p]{A}\frac{\xi\sqrt[p]{b}}{\alpha}.
\end{aligned}
\]
We set $ \tilde{\sigma}_{A}:= \tilde{\sigma}_a \tilde{\sigma}_b\tilde{\sigma}_a^{-1}\tilde{\sigma}_b^{-1}$. Then
\[
\tilde{\sigma}_{A}(\sqrt[p]{A})=\xi\sqrt[p]{A}.
\]
This implies that $\tilde{\sigma}_{A}$ is of order $p$ and that ${\rm Gal}(L/F)$ is generated by $\tilde{\sigma}_a$ and $\tilde{\sigma}_b$. We also have
\[
\begin{aligned}
\tilde{\sigma}_a \tilde{\sigma}_{A}= \tilde{\sigma}_{A}\tilde{\sigma}_a, \;\text{ and } \tilde{\sigma}_b \tilde{\sigma}_{A}= \tilde{\sigma}_{A}\tilde{\sigma}_b.
\end{aligned}
\]
We can define an isomorphism $\varphi \colon {\rm Gal}(L/F)\to \U_3(\Z/p\Z)$ by letting
\[
\tilde{\sigma}_a \mapsto x:=\begin{bmatrix}
1& 1 & 0 \\
0& 1 & 0 \\
0& 0 & 1
\end{bmatrix},
\tilde{\sigma}_b\mapsto y:=
\begin{bmatrix}
1& 0 & 0 \\
0& 1 & 1 \\
0& 0 & 1 
\end{bmatrix},
\tilde{\sigma}_{A}\mapsto z:=
\begin{bmatrix}
1& 0 & 1 \\
0& 1 & 0 \\
0& 0 & 1 
\end{bmatrix}.
\]
Then the composition $\rho\colon G_F\to {\rm Gal}(L/F)\stackrel{\varphi}{\to} \U_3(\Z/p\Z)$ is the desired lifting of $\bar{\rho}$.
\end{proof}
\begin{cor}
\label{cor: Heisenberg} Let the notation be as in Proposition~\ref{prop:Heisenberg extension}. Let $\varphi_{ab}$ be the map $-\rho_{13}\colon G_F\to \Z/p\Z$. Then 
\[
d\varphi_{ab}= \chi_a\cup \chi_b \text { and } \res_{\ker\chi_a}(\varphi_{ab})=-\chi_A.
\]
\end{cor}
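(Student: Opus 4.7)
The corollary is essentially a bookkeeping consequence of Proposition~\ref{prop:Heisenberg extension}, with one direct cohomological computation to supply the coboundary identity. The plan is to extract both statements from the matrix form of a homomorphism $\rho : G_F \to \U_3(\Z/p\Z)$ that reduces mod center to $\bar\rho = (\chi_a,\chi_b)$.

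\textbf{Step 1: The coboundary identity.} Writing
\[
\rho(g) = \begin{bmatrix} 1 & \chi_a(g) & \rho_{13}(g) \\ 0 & 1 & \chi_b(g) \\ 0 & 0 & 1 \end{bmatrix},
\]
the equality $\rho(g_1 g_2) = \rho(g_1)\rho(g_2)$ forces, by comparing the $(1,3)$-entry after matrix multiplication,
\[
\rho_{13}(g_1 g_2) = \rho_{13}(g_1) + \rho_{13}(g_2) + \chi_a(g_1)\chi_b(g_2).
\]
Since the coefficients are trivial, the $1$-cochain coboundary reads $d\rho_{13}(g_1,g_2) = \rho_{13}(g_1) - \rho_{13}(g_1 g_2) + \rho_{13}(g_2)$, and the previous identity yields $d\rho_{13} = -\chi_a \cup \chi_b$. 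Multiplying by $-1$ gives $d\varphi_{ab} = \chi_a \cup \chi_b$, which is the first assertion.

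\textbf{Step 2: The restriction identity.} This is a direct translation of Proposition~\ref{prop:Heisenberg extension}. By hypothesis $\varphi_{ab} = -\rho_{13}$, and Proposition~\ref{prop:Heisenberg extension} says precisely that $\res_{\ker\chi_a}(\rho_{13}) = \chi_A$. Applying the restriction map is linear, so
\[
\res_{\ker\chi_a}(\varphi_{ab}) = -\res_{\ker\chi_a}(\rho_{13}) = -\chi_A.
\]

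\textbf{Main obstacle.} There is essentially no obstacle: the first statement is a one-line matrix computation using only that $\rho$ is a homomorphism lifting $(\chi_a,\chi_b)$, and the second statement is a sign change applied to a result already proved. The only point requiring any care is the sign convention in the formula $d\rho_{13} = -\chi_a \cup \chi_b$ (i.e.\ getting the minus sign right in the definition of the cup product $(\chi_a\cup\chi_b)(g_1,g_2) = \chi_a(g_1)\chi_b(g_2)$), which explains why the corollary introduces $\varphi_{ab} := -\rho_{13}$ rather than $\rho_{13}$ itself.
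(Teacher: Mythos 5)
Your proof is correct and follows essentially the same route as the paper: the identity $d\varphi_{ab}=\chi_a\cup\chi_b$ is obtained exactly as in the paper by reading off the $(1,3)$-entry of the homomorphism property of $\rho$ (with $\rho_{12}=\chi_a$, $\rho_{23}=\chi_b$), and the restriction statement $\res_{\ker\chi_a}(\varphi_{ab})=-\chi_A$ is, as you say, just the sign-changed restatement of Proposition~\ref{prop:Heisenberg extension}, which the paper leaves implicit.
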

\begin{proof} Since $\rho\colon G_F\to \U_3(\Z/p\Z)$ is a homomorphism, we obtain
\[
d\varphi_{ab}(\sigma,\tau)= \rho_{13}(\sigma\tau)-\rho_{13}(\sigma)-\rho_{13}(\tau)=\rho_{12}(\sigma)\rho_{23}(\tau)=(\chi_a\cup\chi_b)(\sigma,\tau).
\]
Therefore $d\varphi_{ab}= \chi_a\cup \chi_b$, as desired.
\end{proof}
\section{Triple Massey products}
\subsection{Triple Massey products over fields containing primitive $p$-th roots of unity}
In this subsection we assume that $F$ is a field containing a primitive $p$-th root of unity.  Let $a$, $b$ and $c$ be elements in $F^\times$. Assume further that the triple Massey product $\langle \chi_a,\chi_b,\chi_c\rangle $ is defined, i.e., we have $\chi_a\cup\chi_b=0=\chi_b\cup \chi_c$.  Until Theorem~\ref{thm:vanishing}, we always assume further that $a$ and $c$ are linearly independent modulo $(F^\times)^p$, and that $a$ and $b$ are linearly independent modulo $(F^\times)^p$. We also fix two elements $f_a$ and $f_c$ in $F^\times$. Let $A_0$ be the element defined right before Lemma~\ref{lem:operator}. Let $A=f_aA_0$.

By Corollary~\ref{cor: Heisenberg}, there is a map $\varphi_{ab}\colon G_F\to \Z/p\Z$ such that
\[
d\varphi_{ab}= \chi_a\cup \chi_b \in C^2(G_F,\Z/p\Z) \text { and } \res_{\ker\chi_a}(\varphi_{ab})=-\chi_A.
\]
Since $\chi_b\cup\chi_c=0$ in $H^2(G_F,\Z/p\Z)$, there exists a map $\varphi_{bc}\colon G_F\to \Z/p\Z$ such that
\[
d\varphi_{bc}= \chi_b\cup \chi_c \in C^2(G_F,\Z/p\Z).
\]
Then 
\[
\langle \chi_a,\chi_b,\chi_c\rangle_\varphi := \chi_a\cup \varphi_{bc}+\varphi_{ab}\cup\chi_c 
\]
is an element in the triple Massey product $\langle \chi_a,\chi_b,\chi_c\rangle$.

We consider the following commutative diagram
\[
\xymatrix{
H^2(G_F,\Z/p\Z) \ar@{^{(}->}[r]^i \ar@{->}[d]^{\res_{\ker\chi_a}} & H^2(G_F,F_s^\times) \ar@{->}[d]^{\res_{\ker\chi_a}}\\
H^2(G_{F(\sqrt[p]{a})},\Z/p\Z) \ar@{^{(}->}[r]^i & H^2(G_{F(\sqrt[p]{a})},F_s^\times).
}
\]

\begin{lem}
\label{lem:restriction of Massey} We have

\begin{enumerate}
\item $
{\rm res}_{\ker \chi_a}(\langle \chi_a,\chi_b,\chi_c\rangle_\varphi)= \res_{\ker\chi_a}(\chi_c)\cup\chi_A \in H^2(G_{F(\sqrt[p]{a})},\Z/p\Z).
$
\item 
$
{\rm res}_{\ker \chi_a}(i(\langle \chi_a,\chi_b,\chi_c\rangle_\varphi))=  (c,A) \in H^2(G_{F(\sqrt[p]{a})},F_s^\times).
$
\end{enumerate}
\end{lem}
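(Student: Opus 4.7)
The plan is to compute the restriction directly from the cochain formula for $\langle\chi_a,\chi_b,\chi_c\rangle_\varphi$, exploiting the fact that $\ker\chi_a = G_{F(\sqrt[p]{a})}$ annihilates $\chi_a$ and that Corollary~\ref{cor: Heisenberg} tells us exactly what $\varphi_{ab}$ restricts to.

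\textbf{Part (1).} By definition,
\[
\langle \chi_a,\chi_b,\chi_c\rangle_\varphi = \chi_a\cup \varphi_{bc}+\varphi_{ab}\cup \chi_c.
\]
Restricting to $H=\ker\chi_a$, the first summand vanishes since $\res_H(\chi_a)=0$. (Note that $\res_H(\varphi_{bc})$ and $\res_H(\varphi_{ab})$ are honest cocycles on $H$: from $d\varphi_{ab}=\chi_a\cup\chi_b$ and $d\varphi_{bc}=\chi_b\cup\chi_c$ the latter is automatic, while for the former we use that $\chi_a|_H=0$.) Hence in $H^2(H,\Z/p\Z)$,
\[
\res_H(\langle\chi_a,\chi_b,\chi_c\rangle_\varphi)=\res_H(\varphi_{ab})\cup\res_H(\chi_c).
\]
By Corollary~\ref{cor: Heisenberg}, $\res_H(\varphi_{ab})=-\chi_A$. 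Finally, graded commutativity of the cup product on $H^1$ gives $-\chi_A\cup\res_H(\chi_c)=\res_H(\chi_c)\cup\chi_A$, which is assertion (1).

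\textbf{Part (2).} Apply the map $i\colon H^2(-,\Z/p\Z)\hookrightarrow H^2(-,F_s^\times)$ to (1) and use the commutativity of the displayed square to move $i$ past $\res_H$:
\[
\res_H\bigl(i(\langle\chi_a,\chi_b,\chi_c\rangle_\varphi)\bigr)=i\bigl(\res_H(\chi_c)\cup\chi_A\bigr).
\]
Recalling from the paragraph preceding Proposition~\ref{prop:killed by p-cyclic extension} that $i$ converts the cup product of two Kummer characters into the corresponding norm residue symbol, we have $i\bigl(\res_H(\chi_c)\cup\chi_A\bigr)=(c,A)$ in $H^2(G_{F(\sqrt[p]{a})},F_s^\times)$, yielding (2).

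Neither step should present any real obstacle: once one notices that restriction kills $\chi_a$, part (1) reduces to a single invocation of Corollary~\ref{cor: Heisenberg} plus graded commutativity, and part (2) is a formal consequence of the naturality of $i$ together with the comparison of cup products and norm residue symbols. The only point requiring care is the sign introduced by graded commutativity of degree-one classes, which is precisely why the statement displays $\res_H(\chi_c)\cup\chi_A$ rather than $\chi_A\cup\res_H(\chi_c)$.
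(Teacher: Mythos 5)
Your proposal is correct and follows essentially the same route as the paper: restriction to $\ker\chi_a$ kills the $\chi_a\cup\varphi_{bc}$ term, Corollary~\ref{cor: Heisenberg} identifies $\res_{\ker\chi_a}(\varphi_{ab})$ with $-\chi_A$, anticommutativity of degree-one cup products gives (1), and (2) is the commutative square plus the identification of $i$ applied to a cup product of Kummer characters with the norm residue symbol over $F(\sqrt[p]{a})$. One inessential slip in your parenthetical: $\res_{\ker\chi_a}(\varphi_{bc})$ need not be a cocycle (its coboundary is $\res_{\ker\chi_a}(\chi_b)\cup\res_{\ker\chi_a}(\chi_c)$, which need not vanish), but this does not affect the argument since $\res_{\ker\chi_a}(\chi_a)$ is identically zero as a cochain, so that summand dies at the cochain level regardless.
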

\begin{proof}
(1) We have
\[
\begin{aligned}
{\rm res}_{\ker \chi_a}(\langle \chi_a,\chi_b,\chi_c\rangle_\varphi)&= \res_{\ker\chi_a}(\chi_a\cup \varphi_{bc}+\varphi_{ab}\cup\chi_c )\\
&= \res_{\ker\chi_a}(\chi_a)\cup \res_{\ker\chi_a}(\varphi_{bc})+ \res_{\ker\chi_a}(\varphi_{ab})\cup\res_{\ker\chi_a}(\chi_c)\\
&= -\chi_A\cup \res_{\ker\chi_a}(\chi_c)\\
& = \res_{\ker\chi_a}(\chi_c)\cup \chi_A.
\end{aligned}
\]
(2) This follows from (1) and the commutativity of the above diagram.
\end{proof}

Let $E= F(\sqrt[p]{a},\sqrt[p]{c})$. Since $\chi_a\cup\chi_b=0=\chi_b\cup \chi_c$, we have $(a,b)=(b,c)=0$. Thus there are $\alpha$ in $F(\sqrt[p]{a})$ and $\gamma$ in $F(\sqrt[p]{c})$ such that
\[
N_{E/F(\sqrt[p]{a})}(\alpha)=b=N_{E/F(\sqrt[p]{c})}(\gamma).
\]

Let $G$ be the Galois group ${\rm Gal}(E/F)$. Then $G=\langle \sigma_a,\sigma_c \rangle $, where $\sigma_a\in G$ (respectively $\sigma_c\in G$) is an extension of $\sigma_a\in {\rm Gal}(F(\sqrt[p]{a})/F)$ (respectively $\sigma_c\in {\rm Gal}(F(\sqrt[p]{c})/F)$). We define 
\[
\begin{aligned}
C_0=\prod_{i=0}^{p-2} \sigma_c^{i}(\gamma^{p-i-1}) \in  F(\sqrt[p]{a}), 
\end{aligned}
\] 
$C:=f_cC_0$, and define $B:=\gamma/\alpha$. 
\begin{lem}
\label{lem:operators}
 We have
\begin{enumerate}
\item $\dfrac{\sigma_a(A)}{A}=N_{\sigma_c}(B)$.
\item $\dfrac{\sigma_c(C)}{C}=N_{\sigma_a}(B)^{-1}$.
\end{enumerate}
\end{lem}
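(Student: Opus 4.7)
The plan is to reduce both identities to direct computations of norms of $B=\gamma/\alpha$, using Lemma~\ref{lem:operator} on the left-hand side and exploiting the fact that each of $\alpha,\gamma$ is fixed by the generator of the ``other'' cyclic factor of $G=\Gal(E/F)$.

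First, I would apply Lemma~\ref{lem:operator} twice, once for $A=f_aA_0$ and once for $C=f_cC_0$. Since $f_a,f_c\in F^\times$ are fixed by every element of $G$, they cancel in $\sigma_a(A)/A$ and $\sigma_c(C)/C$, so the lemma (together with the symmetric version obtained by swapping the roles of $a$ with $c$ and $\alpha$ with $\gamma$) gives
\[
\frac{\sigma_a(A)}{A}=\frac{N_{F(\sqrt[p]{a})/F}(\alpha)}{\alpha^p}=\frac{b}{\alpha^p},\qquad \frac{\sigma_c(C)}{C}=\frac{N_{F(\sqrt[p]{c})/F}(\gamma)}{\gamma^p}=\frac{b}{\gamma^p},
\]
where the final equalities use $N_{E/F(\sqrt[p]{a})}(\alpha)=\cdots=b$ and the analogous statement for $\gamma$.

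Next I would compute the right-hand sides. Under the identification $G=\langle\sigma_a\rangle\times\langle\sigma_c\rangle$, the element $\sigma_c\in G$ fixes $\sqrt[p]{a}$ and hence fixes every element of $F(\sqrt[p]{a})$; in particular it fixes $\alpha$. Thus
\[
N_{\sigma_c}(B)=\prod_{i=0}^{p-1}\sigma_c^i\!\left(\frac{\gamma}{\alpha}\right)=\frac{\prod_{i=0}^{p-1}\sigma_c^i(\gamma)}{\alpha^p}.
\]
Now $\sigma_c$ generates $\Gal(E/F(\sqrt[p]{a}))$ and restricts to the chosen generator of $\Gal(F(\sqrt[p]{c})/F)$, so for $\gamma\in F(\sqrt[p]{c})$ we have $\prod_{i=0}^{p-1}\sigma_c^i(\gamma)=N_{F(\sqrt[p]{c})/F}(\gamma)=b$. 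Combining, $N_{\sigma_c}(B)=b/\alpha^p=\sigma_a(A)/A$, which is assertion (1). An entirely symmetric computation (swap the roles of $a$ with $c$, $\alpha$ with $\gamma$) yields $N_{\sigma_a}(B)=\gamma^p/b$, and inverting gives $N_{\sigma_a}(B)^{-1}=b/\gamma^p=\sigma_c(C)/C$, which is assertion (2).

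There is no real obstacle: the one place to be careful is the bookkeeping of which generator acts on which subfield — specifically, checking that the restriction $\sigma_c|_{F(\sqrt[p]{c})}$ is the chosen generator $\sigma_c$ of $\Gal(F(\sqrt[p]{c})/F)$ (so that its orbit product on $\gamma$ really gives the full norm $b$), and dually for $\sigma_a$. Once this identification is in place, each identity collapses to a one-line comparison with the conclusion of Lemma~\ref{lem:operator}.
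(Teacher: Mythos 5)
Your proof is correct and follows essentially the same route as the paper: both apply Lemma~\ref{lem:operator} (and its symmetric analogue with $a,\alpha$ replaced by $c,\gamma$) to get $\sigma_a(A)/A=b/\alpha^p$ and $\sigma_c(C)/C=b/\gamma^p$, and then rewrite these as $N_{\sigma_c}(\gamma)/N_{\sigma_c}(\alpha)=N_{\sigma_c}(B)$ and $N_{\sigma_a}(\alpha)/N_{\sigma_a}(\gamma)=N_{\sigma_a}(B)^{-1}$ using that $\sigma_c$ fixes $\alpha$ and $\sigma_a$ fixes $\gamma$. Your extra remark about checking which generator restricts to which cyclic quotient is exactly the implicit bookkeeping in the paper's one-line computation.
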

\begin{proof}
(1) From Lemma~\ref{lem:operator}, we have
\[
\dfrac{\sigma_a(A)}{A}= \frac{b}{\alpha^p}=\frac{N_{\sigma_c}(\gamma)}{N_{\sigma_c}(\alpha)}=N_{\sigma_c}(B). 
\]
(2)
 From Lemma~\ref{lem:operator}, we have
\[
\dfrac{\sigma_c(C)}{C}= \frac{b}{\gamma^p}=\frac{N_{\sigma_a}(\alpha)}{N_{\sigma_a}(\gamma)}=N_{\sigma_a}(B)^{-1}. 
\]
\end{proof}

We consider $E^\times$ as a $G$-module under the Galois action. The diagram (**) in Subsection~\ref{subsec:bicyclic} becomes
\[
\tag{***}
\xymatrix{
&(F^\times)^2 \ar@{->}[d] \ar@{->}[rd]^{u}\\
(E^\times)^2 \ar@{->}[r] \ar@{->}[rd]^{v}&Z^2(G,E^\times) \ar@{->}[r] \ar@{->}[d] &H^2(G,E^\times) \ar@{->}[r] &0 \\
&\,_{N_G} E^\times \ar@{->}[d] \\
&0
}.
\]
Here  
\[ 
\begin{aligned}
v(x,y)&= \frac{\sigma_c(x)}{x}\frac{y}{\sigma_a(y)},\\
u(x,z)&= x\cup\delta\chi^{\sigma_c} + z\cup\delta\chi^{\sigma_a}.
\end{aligned}
\]
The natural isomorphism  $\eta \colon {\rm coker}(u) \hookrightarrow {\rm coker}(v)$ is given by
\[
\eta[(x,y,z)]=[y].
\]

In the following results in this subsection (except in Theorem~\ref{thm:vanishing}) we use the presentation of cohomology classes as in Section 2.
\begin{cor}
The  triple $(A,B,C)$ is  an element in $Z^2(G,E^\times)$. 
\end{cor}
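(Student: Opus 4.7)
The plan is simply to verify directly the four defining equations of $Z^2(G, E^\times)$ extracted from the explicit description of $\ker d_2^*$ in Subsection~\ref{subsec:bicyclic}. Comparing the formula $v(x,y) = \sigma_c(x)/x \cdot y/\sigma_a(y)$ appearing in diagram (***) with the general formula $v(x,y) = D_t x - D_s y$ of Subsection~\ref{subsec:bicyclic}, I identify $t = \sigma_c$ and $s = \sigma_a$. Translating the conditions defining $\ker d_2^*$ into the multiplicative notation of the $G$-module $E^\times$, membership of $(A,B,C)$ in $Z^2(G, E^\times)$ amounts to the four equations
\[
\sigma_c(A) = A, \qquad \frac{\sigma_a(A)}{A} = N_{\sigma_c}(B), \qquad \frac{\sigma_c(C)}{C} = N_{\sigma_a}(B)^{-1}, \qquad \sigma_a(C) = C.
\]

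The first and fourth equations are immediate from the construction of $A$ and $C$: since $\alpha \in F(\sqrt[p]{a})$ and $\sigma_a$ preserves $F(\sqrt[p]{a})$, the element $A_0 = \prod_{i=0}^{p-2} \sigma_a^i(\alpha^{p-i-1})$ lies in $F(\sqrt[p]{a})$, and then $A = f_a A_0 \in F(\sqrt[p]{a})$ is fixed by $\sigma_c$. Symmetrically, $C = f_c C_0 \in F(\sqrt[p]{c})$ is fixed by $\sigma_a$.

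The second and third equations are precisely the content of Lemma~\ref{lem:operators}(1) and (2), respectively. Combining these four verifications yields $(A,B,C) \in \ker d_2^* = Z^2(G, E^\times)$.

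The only real obstacle is bookkeeping: one must correctly match the additive conventions of Subsection~\ref{subsec:bicyclic} with the multiplicative action on $E^\times$, and unambiguously fix the roles of $s$ and $t$ among the generators $\sigma_a, \sigma_c$ of $G$ via the explicit form of $v$ in diagram (***). Once this dictionary is in place, the statement reduces to the three facts already established, namely the inclusions $A \in F(\sqrt[p]{a})$ and $C \in F(\sqrt[p]{c})$, and the two identities of Lemma~\ref{lem:operators}.
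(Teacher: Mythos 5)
Your proposal is correct and is essentially the paper's own argument: the paper's one-line proof (``this follows immediately from the previous lemma'') is exactly your verification, with the two cocycle conditions $D_{\sigma_a}(A)=N_{\sigma_c}(B)$ and $D_{\sigma_c}(C)=-N_{\sigma_a}(B)$ given by Lemma~\ref{lem:operators} and the remaining two being the obvious facts that $\sigma_c$ fixes $A\in F(\sqrt[p]{a})$ and $\sigma_a$ fixes $C\in F(\sqrt[p]{c})$. Your dictionary $s=\sigma_a$, $t=\sigma_c$ read off from the map $v$ in diagram (***) is the intended one, so nothing is missing.
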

\begin{proof} This follows immediately from the previous lemma.
\end{proof}

\begin{lem}
\label{lem: in image}
The element $[(A,B,C)]$ is in the image of $u$. 
\end{lem}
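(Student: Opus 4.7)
The plan is to combine the formal reduction provided by diagram $(***)$ of Subsection~\ref{subsec:bicyclic} with Hilbert 90. Since $H^1(G,E^\times)=0$ by Hilbert 90 for the biquadratic extension $E/F$, the natural injection $\eta\colon{\rm coker}(u)\hookrightarrow{\rm coker}(v)$ is in fact an isomorphism (as recorded at the end of that subsection, where the ${\rm iff}$-criterion $H^1(G,M)=0$ is noted). Because $\eta([(A,B,C)])=[B]$, proving the lemma is equivalent to showing that $B=\gamma/\alpha$ belongs to $\im(v)$.

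To produce $x,y\in E^\times$ with $v(x,y)=\sigma_c(x)/x\cdot y/\sigma_a(y)=B$, I would first search for $y$ such that $B\cdot\sigma_a(y)/y$ lies in $\ker N_{\sigma_c}$; then Hilbert 90 applied to the cyclic extension $E/F(\sqrt[p]{a})$ supplies $x$ with $\sigma_c(x)/x=B\cdot\sigma_a(y)/y$, completing the factorization. Applying $N_{\sigma_c}$ to this condition and using Lemma~\ref{lem:operators}, which gives $N_{\sigma_c}(B)=b/\alpha^p$, the task reduces to finding $y$ so that $Y:=N_{\sigma_c}(y)\in F(\sqrt[p]{a})^\times$ satisfies $\sigma_a(Y)/Y=\alpha^p/b$. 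By Lemma~\ref{lem:operator}, the element $Y=A_0^{-1}$ solves this equation, and every other solution differs from $A_0^{-1}$ by an element of $F^\times$.

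The main obstacle is the last realization step: constructing $y\in E^\times$ whose $\sigma_c$-norm is $A_0^{-1}$ up to a factor in $F^\times$. This does not follow from Hilbert 90 alone (which addresses only single cyclic factors); it requires exploiting the explicit form $A_0=\prod_{i=0}^{p-2}\sigma_a^i(\alpha^{p-i-1})$ together with the companion element $\gamma\in F(\sqrt[p]{c})^\times$ provided by $\chi_b\cup\chi_c=0$ and satisfying $N_{F(\sqrt[p]{c})/F}(\gamma)=b=N_{F(\sqrt[p]{a})/F}(\alpha)$. A hint that such a $y$ should exist is the observation that for $c'=A_0$ and $d'=C_0$ the coboundary formula $d_1^\ast(c',d')=(A_0^p,B^p,C_0^p)$ holds, so the cohomology class $[(A_0,B,C_0)]$ is already $p$-torsion; this strongly suggests that the desired $y$ can be built as a monomial expression in conjugates of $\alpha$ and $\gamma$ whose $\sigma_c$-norm telescopes, via the identity $N_{\sigma_c}(\gamma)=b$, onto $A_0^{-1}$ times an element of $F^\times$. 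Once such $y$ is exhibited, $x$ is produced by Hilbert 90 and $v(x,y)=B$ concludes the proof.
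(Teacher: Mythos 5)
Your reduction is sound and agrees with the paper's first step: since $H^1(G,E^\times)=0$, $\eta$ is an isomorphism, $\eta([(A,B,C)])=[B]$, and the lemma is equivalent to $B\in\im v$. But from there your argument has a genuine gap, which you yourself flag: you reduce everything to producing $y\in E^\times$ with $N_{\sigma_c}(y)\in A_0^{-1}F^\times$ (so that Hilbert 90 for $\langle\sigma_c\rangle$ then supplies $x$), and you never construct such a $y$. The supporting observations you offer do not close this: the fact that $(A_0,B,C_0)^p=d_1^*(A_0,C_0)$ is a coboundary only says the class is $p$-torsion, which is automatic since $H^2(G,E^\times)$ is killed by $|G|=p^2$, and the suggestion that $y$ "should" be a monomial in conjugates of $\alpha$ and $\gamma$ is a heuristic, not a proof. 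As written, the key existence statement is assumed rather than established.

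The paper closes exactly this gap by applying Hilbert 90 not to $\langle\sigma_c\rangle$ but to the \emph{diagonal} cyclic subgroup $\langle\sigma_a\sigma_c\rangle$: since $\gamma$ is fixed by $\sigma_a$ and $\alpha$ by $\sigma_c$, one has $N_{\sigma_a\sigma_c}(B)=N_{\sigma_c}(\gamma)/N_{\sigma_a}(\alpha)=b/b=1$, hence $B=\sigma_a\sigma_c(e)/e$ for some $e\in E^\times$, and the identity $\dfrac{\sigma_a\sigma_c(e)}{e}=\dfrac{\sigma_c(\sigma_a(e))}{\sigma_a(e)}\cdot\dfrac{e^{-1}}{\sigma_a(e^{-1})}$ exhibits $B=v(\sigma_a(e),e^{-1})$ directly, with no need to solve your norm condition first. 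In fact this trick retroactively produces the element you were looking for: as the paper shows in the proof of Theorem~\ref{thm:modification}, $N_{\sigma_c}(e)/A_0$ is $\sigma_a$-invariant, hence lies in $F^\times$, so $y=e^{-1}$ satisfies $N_{\sigma_c}(y)\in A_0^{-1}F^\times$. So your target was correct and attainable, but the missing idea is the use of Hilbert 90 for the product generator $\sigma_a\sigma_c$, and without it the proof is incomplete.
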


\begin{proof}
Since  $\eta$ is an isomorphism, it suffices to show that $\eta[(A,B,C)]$ is in the image of $v$.

We have 
\[ N_{\sigma_a\sigma_c}(B)=\frac{N_{\sigma_a\sigma_c}(\alpha)}{N_{\sigma_a\sigma_c}(\gamma)}=\frac{N_{\sigma_a}(\alpha)}{N_{\sigma_c}(\gamma)}=\frac{b}{b}=1.
\]
 Hence by Hilbert's Theorem 90, there exists $e \in E^\times$ such  that 
 \[B=\dfrac{\sigma_a\sigma_c(e)}{e} = \frac{\sigma_c(\sigma_a(e))}{\sigma_a(e)} \frac{e^{-1}}{\sigma_a(e^{-1})}.
 \]
Therefore $B$ is in $\im v$, as desired.
\end{proof}
\begin{cor}
\label{cor:decomposable} There exists $x,y\in F^\times$ such that
\[
{\rm inf}([(A,B,C)])= (a,x)+(c,y)\in H^2(G_F,F_s^\times).
\]
\end{cor}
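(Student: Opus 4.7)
The plan is to read off the conclusion directly from Lemma~\ref{lem: in image} by unpacking the definition of $u$ and then pushing everything through the inflation map $H^2(G,E^\times)\to H^2(G_F,F_s^\times)$, using that inflation is compatible with cup products and with the connecting map~$\delta$.

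First, by Lemma~\ref{lem: in image} there exist elements $x_0,z_0\in F^\times$ with
\[
[(A,B,C)] \;=\; u(x_0,z_0) \;=\; x_0\cup\delta\chi^{\sigma_c} + z_0\cup\delta\chi^{\sigma_a} \quad\text{in } H^2(G,E^\times),
\]
by the explicit formula for $u$ given in diagram (***). I would then apply the inflation map $H^2(G,E^\times)\to H^2(G_F,F_s^\times)$ (obtained by composing the usual inflation with the map induced by $E^\times\hookrightarrow F_s^\times$). Since the elements $x_0,z_0$ lie in $F^\times$, their images in $H^0$ are preserved, and inflation commutes with cup products, giving
\[
{\rm inf}([(A,B,C)]) \;=\; x_0\cup{\rm inf}(\delta\chi^{\sigma_c}) + z_0\cup{\rm inf}(\delta\chi^{\sigma_a}).
\]

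The key identification is that $\chi^{\sigma_a}\colon G\to \tfrac{1}{p}\Z/\Z$ inflates along the restriction $G_F\twoheadrightarrow G={\rm Gal}(E/F)$ to the character $\chi^a\colon G_F\to \tfrac{1}{p}\Z/\Z$ of Subsection~3.1, and similarly $\chi^{\sigma_c}$ inflates to $\chi^c$. This is immediate from the definitions: both $\chi^{\sigma_a}$ and $\chi^a$ send an element acting on $\sqrt[p]{a}$ as $\sqrt[p]{a}\mapsto\xi^k\sqrt[p]{a}$ and fixing $\sqrt[p]{c}$ to $k/p$. Since inflation commutes with the Bockstein $\delta$ associated to $0\to\Z\to\Q\to\Q/\Z\to 0$, we get ${\rm inf}(\delta\chi^{\sigma_a})=\delta\chi^a$ and ${\rm inf}(\delta\chi^{\sigma_c})=\delta\chi^c$.

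Plugging back in and invoking the definition $(a,x):=x\cup\delta\chi^a$ of the norm residue symbol from Subsection~3.1, I obtain
\[
{\rm inf}([(A,B,C)]) \;=\; x_0\cup\delta\chi^c + z_0\cup\delta\chi^a \;=\; (c,x_0) + (a,z_0).
\]
Setting $y:=x_0$ and $x:=z_0$ gives the claimed identity. There is essentially no obstacle at this stage: the real content was already packaged into Lemma~\ref{lem: in image} (i.e.\ into the Hilbert 90 argument applied to $B$), and Corollary~\ref{cor:decomposable} is just the naturality statement translating the ``abstract'' decomposition provided by $u$ into a concrete sum of two norm residue symbols on $G_F$.
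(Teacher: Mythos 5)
Your proposal is correct and follows essentially the same route as the paper: invoke Lemma~\ref{lem: in image} to write $[(A,B,C)]=u(x_0,z_0)$, use the explicit formula for $u$ from diagram (***), and push through inflation (which commutes with cup products and with $\delta$, and carries $\chi^{\sigma_a},\chi^{\sigma_c}$ to $\chi^a,\chi^c$) to recognize the result as $(a,x)+(c,y)$. In fact your write-up is slightly more careful than the paper's, which contains small typographical slips in exactly the places you spell out.
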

\begin{proof}
Since $[(A,B,C)]$ is in the image of $u$, there exist $x$ and $y$ in $F^\times$ such that
\[
[(A,B,C)]= x\cup \delta \chi^{\sigma_a}+y\cup \chi^{\sigma_c}.
\]
This implies that
\[
\begin{aligned}
{\rm inf} ([(A,B,C)])&= x\cup \delta({\rm inf}(\chi^{\sigma_a})) +y\cup \delta({\rm inf}(\chi^{\sigma_c}))\\
&=x\cup\delta\chi^a+ y\cup \delta\chi^c\\
&= (x,a)+(y,b),
\end{aligned}
\]
as desired.
\end{proof}
\begin{rmk}
\label{rmk:Tignol} Observe that ${\rm coker}(v) = \hat{H}^{-1}(G,E^\times)$. We can identify naturally the group ${\rm coker}(v)$ with  group ${\rm U}_{\bold b}(G,E^\times)$ (the notation being as in \cite{Ti}). (See \cite[Remark after Corollary 1.6]{Ti}.) Then the composition map 
\[H^2(G,E^\times)\to{\rm coker}(u)\stackrel{\eta}{\to} {\rm coker}(v)={\rm U}_{\bold b}(G,E^\times),\]
 is exactly the map which was denoted by $\epsilon$ in \cite[page 423]{Ti}, $\epsilon\colon H^2(G,E^\times)\to {\rm U}_{\bold b}(G,E^\times)$.
  The proof of Lemma~\ref{lem: in image} shows that the element $[(A,B,C)]$ is in the kernel of $\epsilon$.

We have the following exact sequence
 \[
 0 \longrightarrow H^2(G,E^\times) \stackrel{{\rm inf}}{\longrightarrow} H^2(G_F,F_s^\times)\stackrel{{\rm res}}{\longrightarrow} H^2(G_E,F_s^\times). 
 \]
If we make the natural identification $H^2(G_F,F_s^\times)={\rm Br(F)}$ and $H^2(G_E,F_s^\times)={\rm Br}(E)$, then one can check the map 
\[ {\rm \inf}\colon H^2(G,E^\times) \stackrel{\simeq}{\longrightarrow} {\rm Br}(E/F):=\ker({\rm Br}(F)\to {\rm Br}(E)),\]
is  the natural isomorphism  $\pi \colon  H^2(G,E^\times) \longrightarrow {\rm Br}(E/F)$ mentioned in \cite[page 427]{Ti}. Then Corollary~\ref{cor:decomposable} follows from \cite[Proposition 1.5]{Ti}.
\QEDB
 \end{rmk}

We consider the following commutative diagram
\[
\xymatrix{
H^2(G,E^\times) \ar@{^{(}->}^{\rm inf}[r] \ar@{->}_{\res_{G/\langle\sigma_a\rangle}}[d] & H^2(G_F,F_s^\times) \ar@{->}^{\res}[r] \ar@{->}_{\res_{\ker\chi_a}}[d]&H^2(G_E,F_s^\times)\ar@{=}[d] \\
H^2(\langle\sigma_c\rangle,E^\times) \ar@{^{(}->}^{\rm inf}[r] & H^2(G_{F(\sqrt[p]{a})},F_s^\times) \ar@{->}^{\res}[r] &H^2(G_E,F_s^\times).
}
\]

\begin{lem}
\label{lem:restriction}
We have
\begin{enumerate}
\item $\res_{G/\langle \sigma_a\rangle}([A,B,C])=A\cup \delta\chi^{\sigma_c}\in H^2(\langle\sigma_c\rangle,E^\times)$.
\item $\res_{\ker\chi_a}({\rm inf}([A,B,C]))=(c,A)$ in $H^2(G_{F(\sqrt[p]{a})},F_s^\times)$.
\end{enumerate}
\end{lem}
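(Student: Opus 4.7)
The plan is to prove part~(1) first, by a direct cocycle computation within the framework of Section~\ref{subsec:bicyclic}, and then to deduce part~(2) by chasing the commutative diagram displayed immediately before the lemma.

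For part~(1), I would exploit the key observation that the $\Z[G]$-free resolution $L_\bullet$ introduced in Section~\ref{subsec:bicyclic} is simultaneously a free $\Z[\langle\sigma_c\rangle]$-resolution of $\Z$, since $\Z[G]$ is free of rank $p$ over $\Z[\langle\sigma_c\rangle]$ (using that $G=\langle\sigma_a\rangle\times\langle\sigma_c\rangle$ in our setting). Under this identification the restriction map $\res_{G/\langle\sigma_a\rangle}\colon H^2(G,E^\times)\to H^2(\langle\sigma_c\rangle,E^\times)$ is induced by the inclusion of $\Z[G]$-linear cochains into $\Z[\langle\sigma_c\rangle]$-linear cochains on $L_\bullet$. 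I would then compare this with the standard computation via the two-periodic cyclic resolution of Subsection~\ref{subsec:cohomology of cyclic groups} by constructing an explicit $\Z[\langle\sigma_c\rangle]$-chain map between the two resolutions, and I would track the image of $(A,B,C)$ through this comparison. Using that $\sigma_c(A)=A$ (so that $A\in E^{\sigma_c,\times}$) together with the relations $\sigma_a(A)/A=N_{\sigma_c}(B)$ and $\sigma_c(C)/C=N_{\sigma_a}(B)^{-1}$ from Lemma~\ref{lem:operators} to absorb the $B$ and $C$ contributions into coboundaries, one finds that the class of $(A,B,C)$ in $E^{\sigma_c,\times}/N_{\sigma_c}(E^\times)$ is represented by $A$, which under the isomorphism $x\mapsto x\cup\delta\chi^{\sigma_c}$ becomes $A\cup\delta\chi^{\sigma_c}$, as claimed.

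Part~(2) follows from~(1) together with commutativity of the displayed diagram:
\[
\res_{\ker\chi_a}\bigl({\rm inf}([A,B,C])\bigr)
={\rm inf}\bigl(\res_{G/\langle\sigma_a\rangle}([A,B,C])\bigr)
={\rm inf}(A\cup\delta\chi^{\sigma_c}).
\]
The Kummer character $\chi^c\in\Hom(G_F,\Q/\Z)$ restricted to $G_{F(\sqrt[p]{a})}$ factors through $\Gal(E/F(\sqrt[p]{a}))=\langle\sigma_c\rangle$, where it agrees with $\chi^{\sigma_c}$; hence ${\rm inf}(\chi^{\sigma_c})=\chi^c|_{G_{F(\sqrt[p]{a})}}$, and therefore
\[
{\rm inf}(A\cup\delta\chi^{\sigma_c})
=A\cup\delta\bigl(\chi^c|_{G_{F(\sqrt[p]{a})}}\bigr)
=(c,A)\in H^2(G_{F(\sqrt[p]{a})},F_s^\times)
\]
by the very definition of the norm residue symbol.

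The main obstacle is the cocycle-level identification underlying part~(1): a priori the restricted class could depend on all three entries of the triple $(A,B,C)$, and the real work lies in verifying, via the explicit chain map between the two $\Z[\langle\sigma_c\rangle]$-resolutions and the cocycle relations above, that only $A$ survives modulo $N_{\sigma_c}$-norms. Once this is in hand, part~(2) is a short diagram chase.
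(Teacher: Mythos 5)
Your argument is correct and is essentially the paper's: part (2) is the identical inflation--restriction diagram chase, using ${\rm inf}(\chi^{\sigma_c})=\res_{\ker\chi_a}\chi^c$ and the definition of the norm residue symbol, while for part (1) the paper simply quotes \cite[page 17]{CKM} for exactly the fact your chain-map comparison would establish, namely that restriction to the subgroup $\langle\sigma_c\rangle$ sends $[(x,y,z)]$ to $[x]\in (E^\times)^{\sigma_c}/N_{\sigma_c}(E^\times)$, identified with $x\cup\delta\chi^{\sigma_c}$. One small simplification: the evident chain map from the periodic $\Z[\langle\sigma_c\rangle]$-resolution into $L_\bullet$ sends the degree-two generator to $e_0$, so the restricted cocycle is literally $A$ (only the condition $\sigma_c(A)=A$ from the cocycle relations is used), and there are no $B$- or $C$-contributions that need to be absorbed via Lemma~\ref{lem:operators}.
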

\begin{proof}
(1) By \cite[page 17]{CKM} and under the identification $(E^\times)^{\sigma_c}/N_{\sigma_c}(E^\times)=H^2(\langle \sigma_c\rangle,E^\times)$ via identifying $x$ with the cup product $x\cup \delta(\chi^{\sigma_c})$, we have
\[
\res_{G/\langle \sigma_a\rangle}([A,B,C])= [A]=A\cup \delta\chi^{\sigma_c}. 
\]

(2) From the commutativity of the above diagram, we obtain
\[
\begin{aligned}
\res_{\ker\chi_a}({\rm inf}([A,B,C]))&= {\rm inf} (\res_{G/\langle \sigma_a\rangle}([A,B,C]))\\
&={\rm inf} (A\cup \delta(\chi^{\sigma_c}))\\
&=A\cup \delta({\rm inf}(\chi^{\sigma_c}))\\
&=A\cup \delta\chi^c\\
&=(c,A),
\end{aligned}
\]
as desired.
\end{proof}

\begin{cor}
\label{cor:relationship}
There exists $x\in F^\times$ such that 
\[
i(\langle \chi_a,\chi_b,\chi_c\rangle_{\varphi}) = {\rm inf}([A,B,C])+(a,x) \in H^2(G_F,F_s^\times).
\]
\end{cor}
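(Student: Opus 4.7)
The plan is to compare the two cohomology classes $i(\langle \chi_a,\chi_b,\chi_c\rangle_\varphi)$ and ${\rm inf}([A,B,C])$ in $H^2(G_F,F_s^\times)$ by restricting them to the open subgroup $\ker\chi_a = G_{F(\sqrt[p]{a})}$, and then invoking the description of the kernel of $\res_{\ker\chi_a}$ given in Proposition~\ref{prop:killed by p-cyclic extension}.

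First, I would apply the restriction $\res_{\ker\chi_a}\colon H^2(G_F,F_s^\times)\to H^2(G_{F(\sqrt[p]{a})},F_s^\times)$ to both classes. By Lemma~\ref{lem:restriction of Massey}(2), the image of $i(\langle \chi_a,\chi_b,\chi_c\rangle_\varphi)$ under this map equals the norm residue symbol $(c,A)$. By Lemma~\ref{lem:restriction}(2), the image of ${\rm inf}([A,B,C])$ under the same map is also $(c,A)$. Consequently, the difference
\[
i(\langle \chi_a,\chi_b,\chi_c\rangle_\varphi) - {\rm inf}([A,B,C]) \in H^2(G_F,F_s^\times)
\]
lies in the kernel of $\res_{\ker\chi_a}$.

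Now I would apply Proposition~\ref{prop:killed by p-cyclic extension}, which identifies that kernel with the set of symbols $\{(a,x)\mid x\in F^\times\}$. Thus there exists $x\in F^\times$ with
\[
i(\langle \chi_a,\chi_b,\chi_c\rangle_\varphi) - {\rm inf}([A,B,C]) = (a,x),
\]
which is the desired equality. There is essentially no obstacle here: both preparatory lemmas have already been proved, and the only real content is the observation that the two restricted classes coincide on the nose (not merely up to the kernel), so that the entire difference lands in the kernel of $\res_{\ker\chi_a}$ and is therefore killed by a symbol of the form $(a,x)$.
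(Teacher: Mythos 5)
Your proof is correct and follows the paper's argument exactly: both restrict $i(\langle \chi_a,\chi_b,\chi_c\rangle_\varphi)$ and ${\rm inf}([A,B,C])$ to $\ker\chi_a$, note via Lemma~\ref{lem:restriction of Massey}(2) and Lemma~\ref{lem:restriction}(2) that both restrictions equal $(c,A)$, and then apply Proposition~\ref{prop:killed by p-cyclic extension} to write the difference as a symbol $(a,x)$. No gaps.
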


\begin{proof}
From Lemma~\ref{lem:restriction of Massey} and Lemma~\ref{lem:restriction} we have
\[
\res_{\ker\chi_a}(i(\langle \chi_a,\chi_b,\chi_c\rangle_{\varphi}))= \res_{\ker\chi_a}({\rm inf}([A,B,C])) \in H^2(G_{F(\sqrt[p]{a}}),F_s^\times)).
\]
The statement then follows from Proposition~\ref{prop:killed by p-cyclic extension}.
\end{proof}
\begin{cor}
\label{cor:vanishing} There exist $x$ and $y$ in $F^\times$ such that
\[
\langle \chi_a,\chi_b,\chi_c\rangle_{\varphi}= \chi_a\cup\chi_x + \chi_c\cup\chi_y\in  H^2(G_F,\Z/p\Z).
\]
In particular, the triple Massey product $\langle \chi_a,\chi_b,\chi_c\rangle$ contains 0.
\end{cor}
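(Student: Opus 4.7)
The plan is to assemble Corollary~\ref{cor:decomposable} and Corollary~\ref{cor:relationship}, then pull the result back to $\Z/p\Z$-coefficients via the injection $i$, and finally exploit the indeterminacy of the defining system to land on $0$.

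First I would combine the two preceding corollaries. By Corollary~\ref{cor:relationship} there is some $x_1 \in F^\times$ with
\[
i(\langle \chi_a,\chi_b,\chi_c\rangle_{\varphi}) \;=\; {\rm inf}([A,B,C]) + (a,x_1) \quad\in H^2(G_F,F_s^\times),
\]
and by Corollary~\ref{cor:decomposable} there exist $x_0,y \in F^\times$ with ${\rm inf}([A,B,C]) = (a,x_0) + (c,y)$. Using bilinearity of the norm residue symbol and setting $x := x_0 x_1$, I obtain
\[
i(\langle \chi_a,\chi_b,\chi_c\rangle_{\varphi}) \;=\; (a,x) + (c,y) \quad\in H^2(G_F,F_s^\times).
\]

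Second, since $(a,z) = i(\chi_a\cup\chi_z)$ and $(c,z) = i(\chi_c\cup\chi_z)$, and since $i\colon H^2(G_F,\Z/p\Z) \hookrightarrow H^2(G_F,F_s^\times)$ is injective (as recalled at the top of Subsection~3.1), the identity above descends to
\[
\langle \chi_a,\chi_b,\chi_c\rangle_{\varphi} \;=\; \chi_a\cup\chi_x + \chi_c\cup\chi_y \quad\in H^2(G_F,\Z/p\Z),
\]
which is the first assertion.

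Third, for the ``in particular'' claim, I would modify the defining system. Since $\chi_x,\chi_y\in H^1(G_F,\Z/p\Z)$ are cocycles (they are homomorphisms), the collection
\[
\varphi' \;=\; \{\chi_a,\chi_b,\chi_c,\;\varphi_{ab}+\chi_y,\;\varphi_{bc}-\chi_x\}
\]
still satisfies $\partial(\varphi_{ab}+\chi_y) = \chi_a\cup\chi_b$ and $\partial(\varphi_{bc}-\chi_x) = \chi_b\cup\chi_c$, so it is a valid defining system. Its value is
\[
\langle \chi_a,\chi_b,\chi_c\rangle_{\varphi'} \;=\; \langle \chi_a,\chi_b,\chi_c\rangle_{\varphi} \;-\; \chi_a\cup\chi_x \;+\; \chi_y\cup\chi_c.
\]
Using graded commutativity in $H^2(G_F,\Z/p\Z)$, we have $\chi_y\cup\chi_c = -\chi_c\cup\chi_y$, so the right-hand side equals $(\chi_a\cup\chi_x+\chi_c\cup\chi_y) - \chi_a\cup\chi_x - \chi_c\cup\chi_y = 0$. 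Thus $0\in\langle\chi_a,\chi_b,\chi_c\rangle$.

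There is no real obstacle beyond sign bookkeeping; the entire structural content sits in Corollaries~\ref{cor:decomposable} and~\ref{cor:relationship}. The only subtle point is ensuring that graded commutativity on cohomology classes (rather than on cochains) suffices to cancel the $\chi_c\cup\chi_y$ contribution, which it does because the Massey product value is defined precisely as a cohomology class in $H^2(G_F,\Z/p\Z)$.
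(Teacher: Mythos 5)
Your proposal is correct and follows essentially the same route as the paper: combine Corollaries~\ref{cor:decomposable} and~\ref{cor:relationship}, use bilinearity of the norm residue symbol and injectivity of $i$ to descend to $\Z/p\Z$-coefficients, and then modify the defining system by $\varphi_{ab}\mapsto\varphi_{ab}+\chi_y$, $\varphi_{bc}\mapsto\varphi_{bc}-\chi_x$ to exhibit the value $0$. The only difference is cosmetic: you make explicit the graded-commutativity step $\chi_y\cup\chi_c=-\chi_c\cup\chi_y$ (valid on cohomology classes, which is all that is needed since the Massey product value lives in $H^2$), which the paper leaves implicit.
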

\begin{proof}
By Corollaries~\ref{cor:decomposable} and \ref{cor:relationship}, there are $x_1,x_2$ and $y$ in $F^\times$ such that
\[
i(\langle \chi_a,\chi_b,\chi_c\rangle_{\varphi}) = {\rm inf}([A,B,C])+(a,x_2) = (a,x_1)+ (c,y)+(a,x_2) \in H^2(G_F,F_s^\times).
\]
Let $x:=x_1x_2$. Since $i\colon H^2(G_F,\Z/p\Z)\to H^2(G_F,F_s^\times)$ is injective, we obtain 
\[ \langle \chi_a,\chi_b,\chi_c\rangle_{\varphi}=\chi_a\cup\chi_x + \chi_c\cup \chi_y, \]
as desired.

For the last statement, we replace $\varphi_{ab}$ by $\varphi_{ab}^\prime=\varphi_{ab}+\chi_y$, and $\varphi_{bc}$ by $\varphi_{bc}^\prime=\varphi_{bc}-\chi_x$. (Here we consider $\chi_x$ and $\chi_y$ as  elements in $Z^1(G_F,\Z/p\Z)$.) Then $\varphi^\prime=\{ \varphi_{ab}^\prime,\varphi_{bc}^\prime\}$ is also a defining system for the triple Massey product $ \langle \chi_a,\chi_b,\chi_c\rangle$. Hence $ \langle \chi_a,\chi_b,\chi_c\rangle$ contains
\[  \langle \chi_a,\chi_b,\chi_c\rangle_{\varphi^\prime}=   \langle \chi_a,\chi_b,\chi_c\rangle_{\varphi}-\chi_a\cup \chi_x-\chi_c\cup\chi_y=0.
\qedhere
\]
\end{proof}

\begin{thm}
\label{thm:vanishing}
 Let $F$ be a field containing a primitive $p$-th root of unity. Let  $a$, $b$ and $c$ be elements in $F^\times$. The triple Massey product $\langle \chi_a,\chi_b,\chi_c\rangle$ contains 0 whenever it is defined.
\end{thm}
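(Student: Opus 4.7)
The plan is to deduce Theorem~\ref{thm:vanishing} from Corollary~\ref{cor:vanishing} by a case analysis that removes its two linear-independence hypotheses. First I would handle the degenerate situations where one of $\chi_a,\chi_b,\chi_c$ vanishes: if $\chi_b=0$, the defining system $D_{12}=D_{23}=0$ has Massey value $0$; if $\chi_a=0$ (respectively $\chi_c=0$), take $D_{12}=0$ (resp.\ $D_{23}=0$) together with any cochain satisfying the remaining cocycle condition, again yielding value $0$.

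Assume henceforth that $\chi_a,\chi_b,\chi_c$ are all nonzero in $H^1(G_F,\F_p)$. Suppose $\chi_a$ and $\chi_c$ are $\F_p$-linearly independent. If $\chi_a,\chi_b$ are also independent, Corollary~\ref{cor:vanishing} applies directly. Otherwise $\chi_b=l\chi_a$ with $l\in\F_p^\times$, whence $\chi_c,\chi_b$ are still independent; Corollary~\ref{cor:vanishing} applied to $\langle\chi_c,\chi_b,\chi_a\rangle$ (i.e.\ with the roles of $a$ and $c$ interchanged) then gives $0\in\langle\chi_c,\chi_b,\chi_a\rangle$, and by the standard reversal symmetry of triple Massey products (up to sign and indeterminacy), this forces $0\in\langle\chi_a,\chi_b,\chi_c\rangle$ as well.

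The remaining case --- and the main obstacle --- is when $\chi_c=k\chi_a$ for some $k\in\F_p^\times$, where the bicyclic framework of Subsection~4.1 degenerates because $F(\sqrt[p]{a},\sqrt[p]{c})$ becomes cyclic. Here I would argue by perturbation: choose $a'\in F^\times$ with $\chi_{a'}\cup\chi_b=0$, $\chi_{a'}$ linearly independent from $\chi_a$, and both $(\chi_{a'},\chi_b)$ and $(\chi_a+\chi_{a'},\chi_b)$ independent pairs. Then Corollary~\ref{cor:vanishing} applies both to $\langle\chi_{a'},\chi_b,\chi_c\rangle$ and to $\langle\chi_a+\chi_{a'},\chi_b,\chi_c\rangle$, giving trivialising defining systems for each. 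Using the additivity of the triple Massey product in its first slot on the subspace annihilated by $\cup\chi_b$, and writing $\chi_a=(\chi_a+\chi_{a'})-\chi_{a'}$, one combines these defining systems to produce one for $\langle\chi_a,\chi_b,\chi_c\rangle$ of value $0$.

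The delicate step is the existence of the auxiliary $a'$: this requires $\dim_{\F_p}\ker(\cup\chi_b\colon H^1\to H^2)$ to be large enough to avoid the finitely many bad linear conditions. When this dimension is too small, $\chi_a,\chi_b,\chi_c$ are forced to lie in a single one-dimensional $\F_p$-subspace, and I would treat this last exceptional subcase directly by exhibiting a trivialising defining system built from a Heisenberg-type extension as in Subsection~3.2, exploiting $\chi_a\cup\chi_a=0$ and the resulting symmetry of the relevant cup-product cocycles.
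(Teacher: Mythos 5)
Your first two reductions are fine: the degenerate cases with a trivial character, the direct use of Corollary~\ref{cor:vanishing} when both independence hypotheses hold, and the reversal trick $\langle\chi_a,\chi_b,\chi_c\rangle=\pm\langle\chi_c,\chi_b,\chi_a\rangle$ for the case $\chi_b\in\langle\chi_a\rangle$, $\chi_c\notin\langle\chi_a\rangle$ are all legitimate (the paper instead treats that case with the explicit cochain $-\chi_a^2/2$ for $p$ odd, but your route works). The genuine gap is in your treatment of the remaining case $\chi_c=k\chi_a$. The ``additivity in the first slot'' only yields $\langle u,\chi_b,\chi_c\rangle+\langle v,\chi_b,\chi_c\rangle\subseteq\langle u+v,\chi_b,\chi_c\rangle$ when the two defining systems share the same second cochain $D_{23}$. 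Corollary~\ref{cor:vanishing} gives you trivializing systems $(P_{12},P_{23})$ for $\langle\chi_a+\chi_{a'},\chi_b,\chi_c\rangle$ and $(Q_{12},Q_{23})$ for $\langle\chi_{a'},\chi_b,\chi_c\rangle$ with no control on $P_{23},Q_{23}$; combining them as $(P_{12}-Q_{12},P_{23})$ produces the value $-\chi_{a'}\cup(P_{23}-Q_{23})$, a class in $\chi_{a'}\cup H^1$. Since the indeterminacy of $\langle\chi_a,\chi_b,\chi_c\rangle$ here is $\chi_a\cup H^1+H^1\cup\chi_c=\chi_a\cup H^1$, and $\chi_{a'}$ was chosen independent of $\chi_a$, this cross-term need not be absorbable, so vanishing of the two auxiliary products does not give vanishing of the original one. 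In addition, the existence of $a'$ is not secured and your fallback dichotomy is false: $\ker(\cup\,\chi_b)$ can be exactly $\langle\chi_a,\chi_b\rangle$ with $\chi_b$ independent of $\chi_a$ (so the three characters do not lie in one line), and for $p=2$ the constraints $\chi_{a'}\notin\langle\chi_a\rangle$, $\chi_{a'}\notin\langle\chi_b\rangle$, $\chi_a+\chi_{a'}\notin\langle\chi_b\rangle$ are then unsatisfiable; the final ``Heisenberg-type'' subcase is only a sketch.

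The case $\chi_c\in\langle\chi_a\rangle$ actually needs none of this machinery, and this is how the paper disposes of it. For any defining system $\varphi=\{\varphi_{ab},\varphi_{bc}\}$ the value is $\chi_a\cup\varphi_{bc}+\varphi_{ab}\cup\chi_c$, and restricting to $\ker\chi_a$ kills both $\chi_a$ and $\chi_c$ (the latter because it is proportional to $\chi_a$), so the restriction of the value to $G_{F(\sqrt[p]{a})}$ is $0$. Proposition~\ref{prop:killed by p-cyclic extension} then forces $\langle\chi_a,\chi_b,\chi_c\rangle_\varphi=\chi_a\cup\chi_x$ for some $x\in F^\times$, and replacing $\varphi_{bc}$ by $\varphi_{bc}-\chi_x$ gives a defining system with value $0$. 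This restriction argument covers the whole dependent case (including when $\chi_b$ is also proportional to $\chi_a$) without any auxiliary element, and is what you should substitute for the perturbation step.
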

\begin{proof} Assume that $\langle \chi_a,\chi_b,\chi_c\rangle$ is defined. We can also assume that $a$, $b$ and $c$ are not in $(F^\times)^p$.
\\
\\
{\bf Case 1:} Assume that $a$ and $c$ are linearly dependent modulo $(F^\times)^p$. Let $\varphi=\{\varphi_{ab},\varphi_{bc}\}$ be a defining system for $\langle \chi_a,\chi_b,\chi_c\rangle$. We have
\[
\begin{aligned}
{\rm res}_{\ker \chi_a}(\langle \chi_a,\chi_b,\chi_c\rangle_\varphi)&= \res_{\ker\chi_a}(\chi_a\cup \varphi_{bc}+\varphi_{ab}\cup\chi_c )\\
&= \res_{\ker\chi_a}(\chi_a)\cup \res_{\ker\chi_a}(\varphi_{bc})+ \res_{\ker\chi_a}(\varphi_{ab})\cup\res_{\ker\chi_a}(\chi_c)\\
&= 0\cup \res_{\ker\chi_a}(\varphi_{bc})+ \res_{\ker\chi_a}(\varphi_{ab})\cup 0\\
& = 0.
\end{aligned}
\]
By Proposition~\ref{prop:killed by p-cyclic extension}, $\langle \chi_a,\chi_b,\chi_c\rangle_\varphi=\chi_a\cup\chi_x$ for some $x\in F^\times$. 
We replace $\varphi_{bc}$ by $\varphi_{bc}^\prime=\varphi_{bc}-\chi_x$. (Here we consider $\chi_x$ as an element in $Z^1(G_F,\Z/p\Z)$.) Then $\varphi^\prime=\{ \varphi_{ab},\varphi_{bc}^\prime\}$ is also a defining system for the triple Massey product $ \langle \chi_a,\chi_b,\chi_c\rangle$. Hence $ \langle \chi_a,\chi_b,\chi_c\rangle$ contains
\[  \langle \chi_a,\chi_b,\chi_c\rangle_{\varphi^\prime}=   \langle \chi_a,\chi_b,\chi_c\rangle_{\varphi}-\chi_a\cup \chi_x=0.
\]
 \\
\\
{\bf Case 2:} Assume that $a$ and $b$ are linearly dependent modulo $(F^\times)^p$. Then $\chi_b=\lambda \chi_a$, for some $\lambda\in \Z/p\Z$. 
By the linearity of Massey products, we have
\[
\lambda \langle \chi_a,\chi_a,\chi_c\rangle\subseteq\langle \chi_a,\lambda\chi_a,\chi_c\rangle=\langle \chi_a,\chi_b,\chi_c\rangle.
\]
(See \cite[Lemma 6.2.4 (ii)]{Fe}.) So it is enough to show that $\langle \chi_a,\chi_a,\chi_c\rangle$ contains 0. 
If $p=2$ then $\langle \chi_a,\chi_b,\chi_c\rangle$ contains 0 by \cite[Theorem 1.2]{MT1}. So we may and shall assume that $p>2$. Since then our coefficients are $\F_p$ of characteristic not 2, we can divide by $2$ and define
\[
\varphi_{aa}:=- \frac{\chi_a^2}{2}.
\]
Then it is straightforward to check that
\[
\begin{aligned}
d\varphi_{aa}(\sigma,\tau)= \varphi_{aa}(\sigma)+\varphi_{aa}(\tau)-\varphi_{aa}(\sigma\tau)=\chi_a(\sigma)\chi_a(\tau)= \chi_a\cup\chi_a(\sigma,\tau).
\end{aligned}
\]
We pick any map $\varphi_{ac}\colon G_F\to \Z/p\Z$ such that $d\varphi_{ac}=\chi_a\cup\chi_c$. Then $\{\varphi_{aa},\varphi_{ac}\}$ is a defining system for $\langle \chi_a,\chi_a,\chi_c\rangle$. We have
\[
\begin{aligned}
{\rm res}_{\ker \chi_a}(\langle \chi_a,\chi_a,\chi_c\rangle_\varphi)&= \res_{\ker\chi_a}(\chi_a\cup \varphi_{ac}+\varphi_{aa}\cup\chi_c )\\
&= \res_{\ker\chi_a}(\chi_a)\cup \res_{\ker\chi_a}(\varphi_{ac})+ \res_{\ker\chi_a}(\varphi_{aa})\cup\res_{\ker\chi_a}(\chi_c)\\
&= 0\cup \res_{\ker\chi_a}(\varphi_{ac})+ 0\cup\res_{\ker\chi_a}(\chi_c) \\
& = 0.
\end{aligned}
\]
By Proposition~\ref{prop:killed by p-cyclic extension}, $\langle \chi_a,\chi_a,\chi_c\rangle_\varphi=\chi_a\cup\chi_x$ for some $x\in F^\times$. Use the same argument as in Case 1, this implies that $\langle \chi_a,\chi_a,\chi_c\rangle$ contains 0. (This also follows from a more general result \cite[Proposition 4.2]{Sha}. See also Theorem~\ref{thm:vanishing higher}.)
\\
\\
{\bf Case 3:} Assume that $a$ and $c$ are linearly independent and that $a$ and $b$ are linearly independent. Then Corollary~\ref{cor:vanishing} says that  the triple Massey product $\langle \chi_a,\chi_b,\chi_c\rangle$ contains 0.
\end{proof}
\begin{rmk}
\label{rmk:modification}
 In Case 3 in the above proof, we can show that  a specific element of  the Massey triple product vanishes. This leads to another proof for this case and hence for Theorem~\ref{thm:vanishing}, which avoids using Corollary~\ref{cor:decomposable}.  

\begin{thm}
\label{thm:modification}
 There exists $f_a,f_c$ in $F^\times$ such that $(f_aA_0,B,f_cC_0)$ is a coboundary. 
\end{thm}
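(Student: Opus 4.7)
My plan is to leverage Lemma~\ref{lem: in image} together with the freedom to rescale $A_0$ and $C_0$ by elements of $F^\times$.

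First I would apply Lemma~\ref{lem: in image} with the particular choice $f_a = f_c = 1$ (the hypotheses of that lemma only require $f_a, f_c \in F^\times$) to obtain elements $x_0, z_0 \in F^\times$ such that $[(A_0, B, C_0)] = u(x_0, z_0)$ in $H^2(G, E^\times)$. Under the explicit description of $u$ in Subsection~\ref{subsec:bicyclic}, this class is represented by the cocycle $(x_0, 1, z_0)$, translating the additive zero in the general bicyclic formulas to the multiplicative identity of the module $E^\times$.

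Next I would set $f_a := x_0^{-1}$ and $f_c := z_0^{-1}$. The triple $(f_a A_0, B, f_c C_0)$ is then the coordinate-wise product of $(A_0, B, C_0)$ with $(f_a, 1, f_c)$ in $(E^\times)^3$. A direct verification shows $(f_a, 1, f_c) \in Z^2(G, E^\times)$ (the cocycle conditions reduce to $\sigma_c(f_a) = f_a$ and $\sigma_a(f_c) = f_c$, which hold because $f_a, f_c \in F^\times$), with cohomology class $u(f_a, f_c)$. Since $u$ is a group homomorphism from $(F^\times)^2$ (multiplicative) to $H^2(G, E^\times)$ (additive)---this follows from the bilinearity of cup product in the formula $u(x,z) = x \cup \delta\chi^{\sigma_c} + z \cup \delta\chi^{\sigma_a}$---the class of $(f_a A_0, B, f_c C_0)$ equals
\[u(x_0, z_0) + u(f_a, f_c) = u(x_0 f_a, z_0 f_c) = u(1, 1) = 0,\]
where the final equality holds because $1 \cup \alpha$ is the trivial cocycle for any $\alpha$ (exponentiating the identity of $E^\times$ always yields $1$). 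Hence $(f_a A_0, B, f_c C_0) \in B^2(G, E^\times)$, i.e., it is a coboundary.

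I do not expect a serious obstacle here: the argument amounts to the observation that rescaling $A_0$ and $C_0$ by $F^\times$-scalars modifies the class $[(A_0, B, C_0)]$ by exactly the image of $u$, and Lemma~\ref{lem: in image} already tells us that $[(A_0, B, C_0)]$ lies in that image. The only delicate point is keeping careful track of multiplicative versus additive conventions when translating the general bicyclic formulas of Subsection~\ref{subsec:bicyclic} to the concrete $G$-module $E^\times$; once this bookkeeping is done, the cancellation $u(x_0 f_a, z_0 f_c) = u(1,1) = 0$ follows immediately.
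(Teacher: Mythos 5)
Your argument is correct, but it takes a genuinely different route from the paper's. You deduce the theorem formally from Lemma~\ref{lem: in image}: applying it with $f_a=f_c=1$ gives $[(A_0,B,C_0)]=u(x_0,z_0)=[(x_0,1,z_0)]$ for some $x_0,z_0\in F^\times=(E^\times)^G$, and since rescaling the outer entries by $G$-fixed elements multiplies the cocycle by $(f_a,1,f_c)\in Z^2(G,E^\times)$, whose class is exactly $u(f_a,f_c)$, the choice $f_a=x_0^{-1}$, $f_c=z_0^{-1}$ kills the class; a cocycle with trivial class is a coboundary. All the steps check out (the four cocycle conditions for $(f_a,1,f_c)$ hold because $f_a,f_c$ are $G$-fixed, and $u$ is a homomorphism either by bilinearity of the cup product or directly from $u(x,z)=[(x,1,z)]$). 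The paper instead argues constructively: it only borrows from the proof of Lemma~\ref{lem: in image} the element $e\in E^\times$ with $B=\sigma_a\sigma_c(e)/e$ coming from Hilbert~90, then uses Lemma~\ref{lem:operators} to show that $N_{\sigma_c}(e)/A_0$ and $N_{\sigma_a}(e^{-1})/C_0$ are $\sigma_a$- resp.\ $\sigma_c$-invariant, hence lie in $F^\times$; this produces explicit $f_a,f_c$ together with an explicit cobounding pair $(C_1,C_2)=(\sigma_c(e),e^{-1})$. What your version buys is brevity and a clean conceptual statement (rescaling by $F^\times$ moves $[(A_0,B,C_0)]$ exactly within $\im u$, and Lemma~\ref{lem: in image} says the class already lies there), but it leaves $x_0,z_0$, and hence $f_a,f_c$, non-explicit, since Lemma~\ref{lem: in image} only asserts membership in $\im u$; the paper's explicit formulas are precisely what makes Theorem~\ref{thm:modification} useful in Remark~\ref{rmk:modification} (exhibiting a specific vanishing element of the Massey product without invoking Corollary~\ref{cor:decomposable}) and in the later construction in \cite{MT4}.
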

\begin{proof}
As in the proof of Lemma~\ref{lem: in image}, there exists $e\in E^\times$ such that $B= \dfrac{\sigma_a\sigma_c(e)}{e}$.

By Lemma~\ref{lem:operators}, we have
\[
\begin{aligned}
\frac{\sigma_a(A_0)}{A_0}=N_{\sigma_c}(B)= N_{\sigma_c}\left(\frac{\sigma_a\sigma_c(e)}{e}\right)=\frac{\sigma_a(N_{\sigma_c}(e))}{N_{\sigma_c}(e)}.
\end{aligned}
\]
This implies that
\[
\frac{N_{\sigma_c}(e)}{A_0}=\sigma_a\left(\frac{N\sigma_c(e)}{A_0}\right).
\]
Hence 
\[
\dfrac{N_{\sigma_c}(e)}{A_0} \in F(\sqrt[p]{c})^\times\cap F(\sqrt[p]{a})^\times=F^\times.
\]
Therefore, there exists $f_a\in F^\times$ such that
$
N_{\sigma_c}(e)= A_0 f_a.
$

Similarly, by Lemma~\ref{lem:operators}, we have
\[
\begin{aligned}
\frac{\sigma_c(C_0)}{C_0}=N_{\sigma_a}(B^{-1})= N_{\sigma_a}\left(\frac{\sigma_a\sigma_c(e^{-1})}{e^{-1}}\right)=\frac{\sigma_c(N_{\sigma_a}(e^{-1}))}{N_{\sigma_a}(e^{-1})}.
\end{aligned}
\]
This implies that
\[
\frac{N_{\sigma_a}(e^{-1})}{C_0}=\sigma_c\left(\frac{N\sigma_a(e^{-1})}{C_0}\right).
\]
Hence 
\[
\dfrac{N_{\sigma_a}(e^{-1})}{C_0} \in F(\sqrt[p]{a})^\times\cap F(\sqrt[p]{c})^\times=F^\times.
\]
Therefore, there exists $f_c\in F^\times$ such that
$
N_{\sigma_a}(e^{-1})= C_0 f_c.
$

Let $C_1=\sigma_c(e)$ and $C_2=e^{-1}$. Then we have
\[
\begin{aligned}
B&=\dfrac{\sigma_a\sigma_c(e)}{e} = \frac{\sigma_a(\sigma_c(e))}{\sigma_c(e)} \frac{e^{-1}}{\sigma_c(e^{-1})}=\frac{\sigma_a(C_1)}{C_1} \frac{C_2}{\sigma_c(C_2)}, \\
N_{\sigma_c}(C_1)&=N_{\sigma_c}(\sigma_c(e))=N_{\sigma_c}(e)=A_0f_a,\\
N_{\sigma_a}(C_2)&=N_{\sigma_a}(e^{-1})=C_0 f_c.
\end{aligned}
\]
This implies that $(A_0f_a,B,C_0f_c)$ is in $B^2(G,E^\times)$.
\end{proof}
As a consequence of Theorem~\ref{thm:modification}, we obtain another proof for Case 3 in the proof of Theorem~\ref{thm:vanishing}.
\begin{proof}[Another proof for Case 3 in Proof of Theorem~\ref{thm:vanishing}]
Let $A=f_aA_0$ and $C=f_cC_0$ so that we use the same notation as  before. We replace $\varphi_{bc}$ by $\varphi_{bc}^\prime=\varphi_{bc}-\chi_x$. Then $\varphi^\prime=\{\varphi_{ab},\varphi_{bc}^\prime\}$ is also a defining system for the triple Massey product $\langle\chi_a,\chi_b,\chi_c\rangle$. 
 Corollary~\ref{cor:relationship} implies that $i(\langle\chi_a,\chi_b,\chi_c\rangle)$ contains
 \[
 \begin{aligned}
 i(\langle\chi_a,\chi_b,\chi_c\rangle_{\varphi^\prime})&=i(\langle\chi_a,\chi_b,\chi_c\rangle_{\varphi})-i(\chi_a\cup\chi_x)\\
 &= {\rm inf}([A,B,C])+(a,x)-(a,x)= {\rm inf}([A,B,C]).
 \end{aligned}
 \]
Hence $\langle\chi_a,\chi_b,\chi_c\rangle$ contains $i^{-1}({\rm inf}([A,B,C]))=0$.
\end{proof}
\end{rmk}
\subsection{Embedding problems and triple Massey products over an arbitrary  field}
A {\it weak embedding problem} $\sE$ for a profinite group $\Pi$ is a diagram 
\[
\sE:=
\xymatrix
{
{} & \Pi \ar[d]^{\alpha}\\
U \ar[r]^f & \bar U
}
\] 
which consists of  profinite groups $U$ and $\bar U$ and homomorphisms $\alpha \colon \Pi\to \bar U$, $f\colon U\to \bar U$ with $f$ being surjective. (All homomorphisms of profinite groups considered in this paper are assumed to be continuous.) If in addition $\alpha$ is also surjective, we call $\sE$ an {\it embedding problem}. 

A {\it weak solution} of $\sE$ is  a homomorphism $\beta\colon \Pi\to U$ such that $f\beta=\alpha$. 
 We call $\sE$ a {\it finite} weak embedding problem if  group $U$ is finite. The {\it kernel} of $\sE$ is defined to be $N:=\ker(f)$. 

Let $\U_4(\F_p)$ be the group of all upper-triangular unipotent $4\times 4$-matrices with entries in  $\F_p$. Let $Z$ be the subgroup of all such matrices with all off-diagonal entries being $0$ except at position $(1,4)$. We may identify $\U_4(\F_p)/Z$ with the group $\bar\U_4(\F_p)$ of all upper-triangular unipotent $4\times 4$-matrices with entries over $\F_p$ with the $(1,4)$-entry omitted.
For any representation $\rho\colon G\to \U_4(\F_p)$ and $1\leq i< j\leq 4$, let $\rho_{ij}\colon G\to \F_p$ be the composition of $\rho$ with the projection from $\U_4(\F_p)$ to its $(i,j)$-coordinate. We use  similar notation for representations $\bar\rho\colon G\to\bar\U_4(\F_p)$. Note that for each $i=1,2,3$, $\rho_{i,i+1}$ (resp., $\bar\rho_{i,i+1}$) is a group homomorphism.

Recall that we have the following result (\cite[Lemma 3.1]{MT2}), which is a direct consequence of \cite[Theorem 2.4]{Dwy}. 

\begin{lem}
\label{lem:vanishingEP}
Let $G$ be a profinite group, and $p$ a prime number. Then the following statements are equivalent:
\begin{enumerate}
\item $G$ has the vanishing triple Massey product property with respect to $\F_p$.
\item For every homomorphism $\bar\rho\colon G\to \bar\U_4(\F_p)$, the finite weak embedding problem 
\[
\xymatrix{
& & &G \ar@{->}[d]^{(\bar\rho_{12},\bar\rho_{23},\bar\rho_{34})} \ar@{-->}[ld]\\
0\ar[r]& M \ar[r] &\U_4(\F_p)\ar[r] &(\F_p)^3\ar[r] &1,
}
\]
has a weak  solution, i.e., $(\bar\rho_{12},\bar\rho_{23},\bar\rho_{34})$ can be lifted to a homomorphism $\rho\colon G\to \U_4(\F_p)$.
\end{enumerate}
\end{lem}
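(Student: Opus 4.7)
The plan is to unfold Dwyer's theorem [Dwy, Theorem 2.4] in the relevant dimensions, setting up an explicit dictionary between defining systems for the triple Massey product $\langle \chi_1,\chi_2,\chi_3\rangle$ and homomorphisms into the unipotent groups $\bar{\U}_4(\F_p)$ and $\U_4(\F_p)$. Once that dictionary is in place, both implications of the lemma become formal.

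First I would carry out the following bookkeeping. Given any function $\bar\rho \colon G \to \bar{\U}_4(\F_p)$, write it in the form
\[
\bar\rho(\sigma) \;=\; \begin{pmatrix} 1 & \chi_1(\sigma) & -D_{12}(\sigma) & \ast \\ 0 & 1 & \chi_2(\sigma) & -D_{23}(\sigma) \\ 0 & 0 & 1 & \chi_3(\sigma) \\ 0 & 0 & 0 & 1 \end{pmatrix}
\]
(with the $(1,4)$-entry omitted). A direct matrix multiplication shows that $\bar\rho$ is a continuous group homomorphism precisely when each $\chi_i$ lies in $\Hom(G,\F_p)$ and
\[
\partial D_{12} = \chi_1 \cup \chi_2, \qquad \partial D_{23} = \chi_2 \cup \chi_3,
\]
i.e., precisely when $\{\chi_1,\chi_2,\chi_3,D_{12},D_{23}\}$ is a defining system for $\langle \chi_1,\chi_2,\chi_3\rangle$. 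Extending the same computation to a candidate $(1,4)$-entry $-D_{13}$ of a lift $\rho\colon G \to \U_4(\F_p)$, one finds that $\rho$ is a homomorphism iff additionally $\partial D_{13} = \chi_1 \cup D_{23} + D_{12} \cup \chi_3$, i.e., iff the 2-cocycle representing $\langle \chi_1,\chi_2,\chi_3\rangle_D$ is a coboundary. Thus $\bar\rho$ admits a lift $\rho\colon G\to \U_4(\F_p)$ sharing its $(i,i{+}1)$-entries exactly when $\langle \chi_1,\chi_2,\chi_3\rangle_D = 0$ in $H^2(G,\F_p)$.

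With this dictionary, the two implications go as follows. For (1) $\Rightarrow$ (2), given $\bar\rho$, read off $\chi_i := \bar\rho_{i,i+1}$ and the defining system $D$ extracted from the formula above. The Massey product $\langle \chi_1,\chi_2,\chi_3\rangle$ is therefore defined, so by (1) it contains $0$; hence some (possibly different) defining system $D'$ has $\langle \chi_1,\chi_2,\chi_3\rangle_{D'} = 0$, and the associated $\bar\rho'$ lifts to some $\rho' \colon G \to \U_4(\F_p)$. Since $\rho'$ still projects to $(\chi_1,\chi_2,\chi_3) = (\bar\rho_{12},\bar\rho_{23},\bar\rho_{34})$, it is a weak solution of the embedding problem. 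For (2) $\Rightarrow$ (1), any defined Massey product $\langle \chi_1,\chi_2,\chi_3\rangle$ with chosen defining system $D$ yields a $\bar\rho \colon G\to \bar\U_4(\F_p)$; applying (2) gives a homomorphism $\rho\colon G\to \U_4(\F_p)$ whose $(i,i{+}1)$-entries are the $\chi_i$ and whose $(1,3), (2,4)$-entries determine a new defining system $D'$ with $\langle \chi_1,\chi_2,\chi_3\rangle_{D'} = 0$, proving $0 \in \langle \chi_1,\chi_2,\chi_3\rangle$.

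The main challenge is simply setting up the dictionary cleanly, getting the signs and the indexing conventions right in the correspondence between matrix entries and cochains; no deeper obstacle arises. One mild subtlety worth flagging is that in (1) $\Rightarrow$ (2) the lift $\rho'$ need not agree with the original $\bar\rho$ on the $(1,3)$ and $(2,4)$ entries, but this is harmless because the embedding problem as stated only demands a lift of the diagonal triple $(\bar\rho_{12},\bar\rho_{23},\bar\rho_{34})$, not of $\bar\rho$ itself.
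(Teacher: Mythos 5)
Your proposal is correct and is essentially the paper's own argument: the paper disposes of this lemma by citing \cite[Lemma 3.1]{MT2}, which is in turn a direct consequence of Dwyer's theorem \cite[Theorem 2.4]{Dwy}, and your explicit matrix dictionary between defining systems $\{\chi_1,\chi_2,\chi_3,D_{12},D_{23}\}$ (plus a candidate $(1,4)$-entry) and homomorphisms to $\bar\U_4(\F_p)$ and $\U_4(\F_p)$ is precisely the content of that cited result specialized to $n=3$. The only nitpick is a sign: with your convention of writing the $(1,4)$-entry as $-D_{13}$, the homomorphism condition comes out as $\partial D_{13}=-\left(\chi_1\cup D_{23}+D_{12}\cup\chi_3\right)$ (taking the $(1,4)$-entry with the opposite sign fixes this), which still says exactly that the Massey-product value is a coboundary, so the equivalence and both implications go through unchanged.
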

\begin{prop}
\label{prop: reduction coprime p}
 Let $G$ be a profinite group, and $p$ a prime number. Let $H$ be an open subgroup of $G$ whose index is coprime to $p$. Assume that $H$ has the vanishing triple Massey product property with respect to $\F_p$, then $G$ also has the vanishing triple Massey product property with respect to $\F_p$.
\end{prop}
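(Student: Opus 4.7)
The plan is a standard $\mathrm{cor}\circ\res$ argument applied directly at the level of the triple Massey bracket, exploiting that $[G:H]$ is invertible in $\F_p$.

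I would fix $\chi_1,\chi_2,\chi_3\in H^1(G,\F_p)$ with $\chi_1\cup\chi_2=\chi_2\cup\chi_3=0$ and pick any defining system $D=\{D_{12},D_{23}\}$ on $G$. The cochain-level formula $\langle \chi_1,\chi_2,\chi_3\rangle_D=\chi_1\cup D_{23}+D_{12}\cup\chi_3$, together with the fact that $\res_G^H$ commutes with $\partial$ and with $\cup$, shows that $\res_G^H D=\{\res D_{12},\res D_{23}\}$ is a defining system for $\langle \res\chi_1,\res\chi_2,\res\chi_3\rangle$ on $H$ and that
\[
\res_G^H\bigl(\langle\chi_1,\chi_2,\chi_3\rangle_D\bigr)=\langle \res\chi_1,\res\chi_2,\res\chi_3\rangle_{\res D}.
\]
Equally standard is that the whole set $\langle\chi_1,\chi_2,\chi_3\rangle$ is a coset of the indeterminacy subgroup $\chi_1\cup H^1(G,\F_p)+H^1(G,\F_p)\cup\chi_3$, so $0\in\langle\chi_1,\chi_2,\chi_3\rangle$ is equivalent to $\langle\chi_1,\chi_2,\chi_3\rangle_D$ lying in that indeterminacy.

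By hypothesis $H$ has the vanishing triple Massey property, hence $0\in\langle \res\chi_1,\res\chi_2,\res\chi_3\rangle$ in $H^2(H,\F_p)$. The displayed restriction therefore belongs to the analogous indeterminacy on $H$, i.e.
\[
\res_G^H\bigl(\langle\chi_1,\chi_2,\chi_3\rangle_D\bigr)=\res_G^H(\chi_1)\cup\xi_1+\xi_2\cup\res_G^H(\chi_3)
\]
for some $\xi_1,\xi_2\in H^1(H,\F_p)$. I would then apply $\mathrm{cor}_H^G$ to both sides and invoke the projection formula $\mathrm{cor}_H^G(\res_G^H(x)\cup y)=x\cup \mathrm{cor}_H^G(y)$ together with the identity $\mathrm{cor}_H^G\circ\res_G^H=[G:H]\cdot\mathrm{id}$, obtaining
\[
[G:H]\cdot\langle\chi_1,\chi_2,\chi_3\rangle_D=\chi_1\cup\mathrm{cor}_H^G(\xi_1)+\mathrm{cor}_H^G(\xi_2)\cup\chi_3.
\]
Since $\gcd([G:H],p)=1$, the scalar $[G:H]$ is a unit in $\F_p$, so dividing through shows that $\langle\chi_1,\chi_2,\chi_3\rangle_D$ itself lies in $\chi_1\cup H^1(G,\F_p)+H^1(G,\F_p)\cup\chi_3$, i.e.\ $0\in\langle\chi_1,\chi_2,\chi_3\rangle$.

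There is no serious obstacle here: the only ingredients are the cochain-level naturality of the Massey value under restriction and the projection formula, both of which are essentially formal. I expect the write-up to occupy only a handful of lines once these standard facts are cited.
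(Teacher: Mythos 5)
Your argument is correct, and it takes a genuinely different route from the paper. You work directly with defining systems: you use that for a \emph{triple} Massey product the set of all values is precisely the coset of the indeterminacy subgroup $\chi_1\cup H^1(G,\F_p)+H^1(G,\F_p)\cup\chi_3$ (changing $D_{12},D_{23}$ by $1$-cocycles, which here are exactly elements of $H^1$, changes the value by exactly such terms), that restriction of cochains commutes with $\partial$ and $\cup$, and then the projection formula together with $\mathrm{cor}\circ\res=[G:H]$ and invertibility of $[G:H]$ in $\F_p$; all of these steps check out, including absorbing the scalar $[G:H]^{-1}$ into $\mathrm{cor}(\xi_1)$ and $\mathrm{cor}(\xi_2)$. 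The paper instead invokes Dwyer's theorem (its Lemma on weak embedding problems) to translate the vanishing property into weak solvability of the embedding problem for $\U_4(\F_p)$, and then transfers solvability from $H$ to $G$ via Hoechsmann's lemma and the injectivity of $\res\colon H^2(G,M)\to H^2(H,M)$ for index coprime to $p$. Your approach is more elementary and self-contained, needing only standard res--cor formalism; its drawback is that it exploits the fact that the indeterminacy of a triple product is a subgroup of the stated form, which is special to $n=3$, whereas the paper's embedding-problem argument carries over verbatim to $n$-fold Massey products and $\U_{n+1}(\F_p)$, which is in the spirit of the broader conjecture the paper discusses.
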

\begin{proof}
We shall prove the condition (2) in Lemma \ref{lem:vanishingEP} for the group $G$. 

Let $\bar\rho\colon G\to \bar\U_4(\F_p)$ be any homomorphism. We   consider the weak embedding problem
\[
(\sE) \xymatrix{
& & &G \ar@{->}[d]^{(\bar\rho_{12}, \bar\rho_{23}, \bar\rho_{34})=:\phi} \\
0\ar[r]& M \ar[r] &\U_4(\F_p)\ar[r] &(\F_p)^3\ar[r] &1.
}
\]

Then by the assumption and by Lemma \ref{lem:vanishingEP},   the weak embedding problem $(\sE\mid_H)$ 
\[
(\sE\mid_H) \xymatrix{
& & &H \ar@{->}[d]^{\phi\mid_H} \ar@{-->}[ld]\\
0\ar[r]& M \ar[r] &\U_4(\F_p)\ar[r] &(\F_p)^3\ar[r] &1,
}
\]
which is induced from $(\sE)$, has a weak solution.  Let $\epsilon$ be the cohomology class in $H^2((\F_p)^3,M)$ which corresponds to the extension
\[
1 \longrightarrow M \longrightarrow \U_4(\F_p) \xrightarrow{(a_{12},a_{23},a_{34})} (\F_p)^3\longrightarrow 1.
\]
We have the following commutative diagram
\[
\xymatrix{
H^2((\F_p)^3,M) \ar@{=}[d] \ar@{->}[r]^{\phi^*} & H^2(G,M) \ar@{->}[d]_{{\rm res}}\\
H^2((\F_p)^3,M) \ar@{->}[r]^-{(\phi\mid_H)^*} & H^2(H,M).
}
\]
In particular, $({\phi\mid}_H)^*(\epsilon)={\rm res}(\phi^*(\epsilon))$. Since the weak embedding problem $({\sE\mid}_H)$ has a weak solution, we see that $({\phi\mid}_H)^*(\epsilon)=0$ by Hoechsmann's lemma (\cite[Chapter 3, \S 5, Proposition 3.5.9]{NSW}). (Note that the statement of Hoechsmann's lemma in \cite{NSW} deals with embedding problems, but its proof goes well with  weak embedding problems.)
  Since $[G:H]$ is coprime to $p$ and the order of $M$ is a $p$-power, we see that the restriction map ${\rm res}\colon H^2(G,M)\longrightarrow H^2(H,M)$ is injective by \cite[Chapter I, \S 2, Corollary to Proposition 9]{Se2}. Hence $\phi^*(\epsilon)=0$. Hoechsmann's lemma implies that the weak embedding problem $(\sE)$ has a weak solution, and we are done.
\end{proof}

\begin{thm}
\label{thm:main vanishing}
 Let $F$ be any field and $p$ a prime number. Then the absolute Galois group $G_F$ of $F$ has the vanishing triple Massey product property with respect to $\F_p$
\end{thm}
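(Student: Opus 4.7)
The plan is to reduce to Theorem~\ref{thm:vanishing} by means of Proposition~\ref{prop: reduction coprime p}. The two cases to consider are naturally separated by the characteristic of $F$.

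First I would dispose of the case $\Char(F) = p$ by a cohomological dimension argument. In this characteristic the $p$-cohomological dimension of $G_F$ is at most $1$ (a standard consequence of Artin--Schreier theory), so $H^2(G_F,\F_p) = 0$. Any defined triple Massey product $\langle \chi_1,\chi_2,\chi_3\rangle$ lives in $H^2(G_F,\F_p)$, so its defining $2$-cocycle $\chi_1\cup D_{23}+D_{12}\cup\chi_3$ is automatically a coboundary, and the product contains $0$.

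Now assume $\Char(F) \neq p$. Let $\xi \in F_s$ be a primitive $p$-th root of unity and set $F' := F(\xi)$. The extension $F'/F$ is Galois with Galois group embedding into $(\Z/p\Z)^{\times}$, so $[F':F]$ divides $p-1$ and in particular is coprime to $p$. Since $F'$ contains a primitive $p$-th root of unity, Theorem~\ref{thm:vanishing} applies to $F'$: the absolute Galois group $G_{F'}$ has the vanishing triple Massey product property with respect to $\F_p$. The subgroup $H := G_{F'}$ is an open subgroup of $G := G_F$ whose index $[G:H]=[F':F]$ is coprime to $p$, so Proposition~\ref{prop: reduction coprime p} yields the vanishing triple Massey product property for $G_F$, as desired.

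The main work has already been carried out in Theorem~\ref{thm:vanishing} (the existence of the vanishing defining system over fields containing $\xi_p$) and in the cohomological transfer/corestriction argument underlying Proposition~\ref{prop: reduction coprime p}; the present theorem is essentially a clean descent. The one point of care is that the primitive-root reduction is only available when $\Char(F) \neq p$, which is precisely why the equal-characteristic case must be handled separately by the vanishing of $H^2(G_F,\F_p)$.
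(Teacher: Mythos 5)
Your proof is correct and follows essentially the same route as the paper: adjoin a primitive $p$-th root of unity, observe that $[G_F:G_{F(\xi)}]$ divides $p-1$ and is thus coprime to $p$, and combine Theorem~\ref{thm:vanishing} with Proposition~\ref{prop: reduction coprime p}. The only cosmetic difference is in the case $\Char(F)=p$, where you invoke $\cd_p(G_F)\leq 1$, hence $H^2(G_F,\F_p)=0$, while the paper cites the freeness of the maximal pro-$p$-quotient $G_F(p)$ --- both are the same standard Artin--Schreier fact.
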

\begin{proof}
If ${\rm char} F=p$, then the maximal pro-$p$-quotient $G_F(p)$ of $G_F$ is a free pro-$p$-group. Therefore $G_F(p)$ and $G_F$ has the vanishing triple Massey product property with respect to $\F_p$.

Now we assume that ${\rm char} F\not=p$. Let $\xi$ be a primitive $p$-th root of unity, and let $K=F(\xi)$. Then $K/F$ is a finite extension of degree $d$, and $d$ divides $p-1$. This implies that $[G_F:G_K]=d$ is coprime to $p$. Since $G_K$ has the vanishing triple Massey product property by Theorem~\ref{thm:vanishing}, it follows that $G_F$ also has the vanishing triple Massey product property by Proposition~\ref{prop: reduction coprime p}. 
\end{proof}

\subsection{Some consequences} In this subsection we assume that $p$ is an odd prime number. The case when $p= 2$ is treated in \cite[Theorem 1.3 and Theorem 1.4]{MT1}.

Recall that for a profinite group $G$ and a prime number $p$,  the Zassenhaus ($p$-)filtration $(G_{(n)})$ of $G$ is defined inductively by
\[
G_{(1)}=G, \quad G_{(n)}=G_{(\lceil n/p\rceil)}^p\prod_{i+j=n}[G_{(i)},G_{(j)}],
\]
where $\lceil n/p \rceil$ is the least integer which is greater than or equal to $n/p$. (Here for two closed subgroups $H$ and $K$ of $G$,  $[H,K]$ means the smallest closed subgroup of $G$ containing the  commutators $[x,y]=x^{-1}y^{-1}xy$, $x\in H, y\in K$. Similarly, $H^p$ means the smallest closed subgroup of $G$ containing  the $p$-th powers $x^p$, $x\in H$.)

Let $(I,<)$ be a well-ordered set. Let $S$ be a free pro-$p$-group on  a set of generators $x_i, i\in I$ (see \cite[Definition 3.5.14]{NSW}). Let $S_{(i)}$, $i=1,2,\ldots$ be the $p$-Zassenhaus filtration of $S$. 
Then any element $r$ in $S_{(2)}$ may be written uniquely as
\begin{equation}
r=
\begin{cases}
\displaystyle  \prod_{i<j}[x_i,x_j]^{b_{ij}}\prod_{i\in I} x_i^{3a_i}  \prod_{i< j, k\leq j}[[x_i,x_j],x_k]]^{c_{ijk}}\cdot r', \text{ if } p=3,\\
\displaystyle  \prod_{i<j}[x_i,x_j]^{b_{ij}}\prod_{ i< j, k\leq j}[[x_i,x_j],x_k]]^{c_{ijk}}\cdot r', \text { if } p\not=3,
\end{cases}
\label{modulo S4}
\end{equation}
where $a_i,b_{ij},c_{ijk}\in \{0,1,\ldots,p-1\}$ and $r^\prime\in S_{(4)}$. For convenience we call (\ref{modulo S4}) the canonical decomposition modulo $S_{(4)}$ of $r$ (with respect to the basis $(x_i)$) and we also set $u_{ij}=b_{ij}$ if $i<j$, and  $u_{ij}=b_{ji}$ if $j<i$.

We denote by $G_F(p)$  the maximal pro-$p$-quotient of an absolute Galois group $G_F$ of a given field $F$.

\begin{thm}
\label{thm:main ob1}
 Let $\sR$ be a set of elements in $S_{(2)}$. Assume that there exists an element $r$ in $\sR$ and distinct indices $i,j,k$ with $i<j, k<j$ such that: 
\begin{enumerate}
\item[(i)]  In (\ref{modulo S4})  the canonical decomposition modulo $S_{(4)}$ of $r$, 
$u_{ij}=u_{kj}=u_{ki}=u_{kl}=u_{jl}=0$  for all $l\neq i,j,k$, and  $c_{ijk}\not=0$; and
\item[(ii)]  for every $s\in \sR$ which is different from  $r$,  the factors $[x_k,x_i]$, $[x_i,x_k]$ and $[x_i,x_j]$ do not occur in the canonical decomposition modulo $S_{(4)}$ of $s$. 
\end{enumerate}
Then $G=S/\langle \sR \rangle$ is not realizable as $G_F(p)$ for any field $F$.
\end{thm}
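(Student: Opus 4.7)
The plan is to argue by contradiction: suppose $G\simeq G_F(p)$ for some field~$F$. By Theorem~\ref{thm:main vanishing}, $G_F$ has the vanishing triple Massey product property with respect to $\F_p$. Since $\U_4(\F_p)$ and $\bar\U_4(\F_p)$ are finite $p$-groups, every homomorphism from $G_F$ into either of them factors through~$G_F(p)$, so via Lemma~\ref{lem:vanishingEP} the property descends to $G=G_F(p)$. It therefore suffices to exhibit a triple Massey product in $H^2(G,\F_p)$ that is defined but does not contain~$0$.

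Since $\sR\subset S_{(2)}$, the projection $S\twoheadrightarrow G$ gives $H^1(G,\F_p)\simeq \Hom(S/S_{(2)},\F_p)$, with dual basis $\chi_\ell$ of $x_\ell\bmod S_{(2)}$. Let $R$ denote the closed normal subgroup of $S$ generated by $\sR$; the $5$-term inflation--restriction sequence, combined with $H^2(S,\F_p)=0$ and the triviality of $H^1(S,\F_p)\to H^1(R,\F_p)^G$ (because $R\subset S_{(2)}$), yields a transgression isomorphism $H^2(G,\F_p)\simeq \Hom(R/R^p[R,S],\F_p)$. Under this identification the cup product $\chi_a\cup\chi_b$ pairs with the class of $s\in\sR$ by (a universal sign times) the coefficient $u_{ab}^s$ from the canonical decomposition~\eqref{modulo S4}: this is the classical transgression computation that relates the quadratic parts of relations to cup products. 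Conditions (i) and (ii) force $u_{ij}^s=0$ and $u_{ki}^s=0$ for \emph{every} $s\in\sR$; hence $\chi_i\cup\chi_j=0$ and $\chi_i\cup\chi_k=0$ in $H^2(G,\F_p)$, and the triple Massey product $\langle\chi_j,\chi_i,\chi_k\rangle$ is defined.

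By Lemma~\ref{lem:vanishingEP}, showing $0\notin\langle\chi_j,\chi_i,\chi_k\rangle$ amounts to proving that the weak homomorphism $\bar\rho\colon G\to\bar\U_4(\F_p)$ specified by $(\bar\rho_{12},\bar\rho_{23},\bar\rho_{34})=(\chi_j,\chi_i,\chi_k)$ admits no lift $\rho\colon G\to\U_4(\F_p)$. Any such lift pulls back to $\tilde\rho\colon S\to\U_4(\F_p)$ with $\tilde\rho(x_\ell)$ having first-superdiagonal entries $\delta_{\ell j},\delta_{\ell i},\delta_{\ell k}$ and otherwise free parameters. Using $\U_4(\F_p)^{(4)}=1$ for $p$ odd to kill the $S_{(4)}$-remainder of~$r$, the canonical decomposition~\eqref{modulo S4} reduces $\tilde\rho(r)$ to a product of $\tilde\rho$ of the double- and triple-commutator factors. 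The $(1,3)$ and $(2,4)$ entries of $\tilde\rho(r)$ vanish automatically because $u_{ij}^r=u_{ki}^r=0$, while the remaining vanishing constraints in~(i), namely $u_{kj}^r=u_{kl}^r=u_{jl}^r=0$ for all $l\neq i,j,k$, kill every other potential interaction of~$r$ with the free parameters of~$\tilde\rho$, so the $(1,4)$ entry of $\tilde\rho(r)$ collapses to $\pm c_{ijk}^r\neq 0$; consequently no lift exists.

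The main obstacle is this last step. A conceptually equivalent formulation is that the class of~$r$ in $H_2(G,\F_p)\simeq R/R^p[R,S]$ pairs to $\pm c_{ijk}^r$ with every representative of $\langle\chi_j,\chi_i,\chi_k\rangle$: indeed, the indeterminacy $\chi_j\cup H^1(G,\F_p)+H^1(G,\F_p)\cup\chi_k$ pairs to~$0$ with~$r$ because the vanishing of $u_{j\ell}^r$ and $u_{k\ell}^r$ for all $\ell$ (which follows from the full strength of~(i)) forces the cup products $\chi_j\cup\chi_\ell$ and $\chi_\ell\cup\chi_k$ to evaluate to~$0$ on~$r$. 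The entry-by-entry commutator computation in $\U_4(\F_p)$ with proper sign accounting is the principal technical burden; hypothesis~(ii) enters globally to ensure the cup products vanish in~$H^2(G,\F_p)$ to begin with, while hypothesis~(i) is precisely what guarantees that on~$r$ only the triple-commutator coefficient $c_{ijk}^r$ survives.
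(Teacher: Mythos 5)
Your proposal is correct, but it follows a genuinely different route from the paper: the paper's entire proof is a citation, deducing the statement from Theorem~\ref{thm:main vanishing} combined with \cite[Theorem 7.8]{MT1} (a pro-$p$ group with a presentation satisfying (i) and (ii) fails the vanishing triple Massey product property) and \cite[Corollary 3.5]{MT1} (the property transfers between $G_F$ and $G_F(p)$), whereas you re-derive both cited ingredients from scratch. Your descent step (maps to the $p$-groups $\U_4(\F_p)$ and $\bar\U_4(\F_p)$ factor through the maximal pro-$p$ quotient, so Lemma~\ref{lem:vanishingEP} passes the property from $G_F$ to $G=G_F(p)$) is exactly the content of \cite[Corollary 3.5]{MT1}, and your obstruction step is in substance the proof of \cite[Theorem 7.8]{MT1}: transgression gives $H^2(G,\F_p)\simeq \Hom(R/R^p[R,S],\F_p)$, cup products pair with relations through the coefficients $u_{ab}$, so (ii) together with $u_{ij}^r=u_{ki}^r=0$ makes $\langle\chi_j,\chi_i,\chi_k\rangle$ defined, and the rest of (i) forces every lift attempt to fail. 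I checked the step you flag as the technical burden and it does collapse as you claim: on $S_{(2)}$ the maps $\tilde\rho_{13},\tilde\rho_{24},\tilde\rho_{14}$ are homomorphisms (the superdiagonal entries vanish there), the central $(1,4)$-parameters of the generators are irrelevant on $S_{(2)}$ since exponent sums there are $\equiv 0 \pmod p$, every monomial in $[\tilde\rho(x_a),\tilde\rho(x_b)]_{14}$ contains a $(1,2)$- or $(3,4)$-entry, i.e.\ a factor $\delta_{ja},\delta_{jb},\delta_{ka},\delta_{kb}$, and hypothesis (i) kills every $u_{ab}^r$ whose index pair meets $\{j,k\}$, so no free $(1,3)$- or $(2,4)$-parameter reaches $\tilde\rho(r)_{14}$; among triple commutators only $[[x_i,x_j],x_k]$ can contribute, because a contribution from $[[x_a,x_b],x_c]$ needs $\{a,b\}=\{i,j\},c=k$ or $\{a,b\}=\{i,k\},c=j$, and the latter violates $c\le b$ since $j>\max(i,k)$. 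Two cosmetic points you should add for completeness: for $p=3$ the factors $x_l^{3a_l}$ in (\ref{modulo S4}) must also be checked, but $(1+N)^3=1+N^3$ in characteristic $3$ and $(N^3)_{14}=\delta_{jl}\delta_{il}\delta_{kl}=0$ as $i,j,k$ are distinct; and the reduction should be phrased, as in Lemma~\ref{lem:vanishingEP}, as ``the character triple $(\chi_j,\chi_i,\chi_k)$ admits no lift to $\U_4(\F_p)$'' rather than ``the $\bar\rho$ specified by these characters admits no lift'' (a $\bar\rho$ carries extra data), though your computation, quantifying over all free parameters, proves exactly the right statement. The trade-off: the paper's citation is short and leans on the general machinery of \cite{MT1}, which also covers Theorem~\ref{thm:main ob2}; your argument is self-contained and makes transparent how the hypotheses split, with (ii) (plus part of (i)) securing definedness and the remainder of (i) forcing every value of the product to pair to $\pm c_{ijk}\neq 0$ with $r$.
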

\begin{proof} This follows immediately from Theorem~\ref{thm:main vanishing}, \cite[Theorem 7.8]{MT1} and \cite[Corollary 3.5]{MT1}.
\end{proof}

\begin{thm}
\label{thm:main ob2}
Let $\sR$ be a set of elements in $S_{(2)}$. Assume that there exists an element $r$ in $\sR$ and distinct indices $i<j$ such that: 
\begin{enumerate}
\item[(i)]  In (\ref{modulo S4}) the canonical decomposition modulo $S_{(4)}$ of $r$,  
$u_{ij}=u_{il}=u_{jl}=0$, for all $l\not=i,j$ and $c_{iji}\not=0$ (respectively, $c_{ijj}\not=0$); and
\item[(ii)] for every $s\in \sR$ which is different from  $r$,  the factor $[x_i,x_j]$  does not occur in the canonical decomposition modulo $S_{(4)}$ of $s$. 
\end{enumerate}
Then $G=S/\langle \sR \rangle$ is not realizable as $G_F(p)$ for any field $F$.
\end{thm}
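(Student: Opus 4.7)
The plan is to argue in close parallel with the proof of Theorem~\ref{thm:main ob1}: assuming for contradiction that $G=S/\langle\sR\rangle$ is realized as $G_F(p)$ for some field $F$, I will exhibit a specific triple Massey product that must be defined but cannot contain $0$, contradicting Theorem~\ref{thm:main vanishing}. The novelty compared with Theorem~\ref{thm:main ob1} is that only two indices $i<j$ intervene and one is repeated, so the Massey product in question will be $\langle\chi_i,\chi_j,\chi_i\rangle$ in the case $c_{iji}\neq 0$, and $\langle\chi_i,\chi_j,\chi_j\rangle$ in the case $c_{ijj}\neq 0$, where $\chi_i,\chi_j\in H^1(G,\F_p)$ denote the classes dual to the generators $x_i,x_j$.

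First I would check that the relevant Massey product is defined. Since $p$ is odd, graded commutativity of the cup product forces $\chi_i\cup\chi_i=\chi_j\cup\chi_j=0$ in $H^2(G,\F_p)$. Next, using the standard Lyndon--Hochschild--Serre identification of $H^2(G,\F_p)$ with $\Hom(R/R^p[R,S],\F_p)$ for $G=S/R$, the value of $\chi_i\cup\chi_j$ on a relation $s$ is read off from its $[x_i,x_j]$-coefficient in the canonical decomposition modulo $S_{(3)}$. Hypothesis (i) gives $u_{ij}=0$ for $r$, while hypothesis (ii) kills the $[x_i,x_j]$-coefficient of every other $s\in\sR$. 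Hence $\chi_i\cup\chi_j=0$, and the triple Massey product $\langle\chi_i,\chi_j,\chi_i\rangle$ (respectively $\langle\chi_i,\chi_j,\chi_j\rangle$) is defined. By Theorem~\ref{thm:main vanishing} it must then contain $0$.

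To reach a contradiction I would invoke \cite[Theorem 7.8]{MT1} together with \cite[Corollary 3.5]{MT1}, the same two ingredients used in Theorem~\ref{thm:main ob1}. These results translate the presence of a defining system for a triple Massey product into the existence of a lift of $(\chi_i,\chi_j,\chi_i)$ (respectively $(\chi_i,\chi_j,\chi_j)$) to a homomorphism $G\to\U_4(\F_p)$, and then relate the value of the Massey product evaluated on a relation $r$ to the $[[x_i,x_j],x_i]$-coefficient (respectively $[[x_i,x_j],x_j]$-coefficient) of $r$ modulo $S_{(4)}$. Hypotheses (i) and (ii) have been carefully calibrated so that in this pairing the only surviving contribution from $\sR$ comes from $r$ itself, with weight $c_{iji}$ (respectively $c_{ijj}$). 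Since this weight is nonzero in $\F_p$, the Massey product cannot contain $0$, contradicting the previous paragraph.

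The main obstacle is the bookkeeping in the last step: one must verify that, when the character is repeated, the other triple-commutator entries of $r$ permitted by hypothesis (i) (for example $[[x_i,x_j],x_l]$ with $l\neq i$ in the $c_{iji}\neq 0$ case) and the Engel-type identity $[[x_i,x_j],x_j]+[[x_j,x_i],x_j]\equiv 0 \pmod{S_{(4)}}$ do not accidentally cancel the contribution of $c_{iji}$ (respectively $c_{ijj}$) against the specific lift produced by the defining system. Provided one chooses the Hall-basis convention used in the decomposition (\ref{modulo S4}) and applies \cite[Corollary 3.5]{MT1} with the matching convention, this Magnus-style calculation goes through and the proof is complete.
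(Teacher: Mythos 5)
Your overall strategy is the right one and matches the paper's in spirit: assume $G=S/\langle\sR\rangle$ is realized as $G_F(p)$, observe that hypotheses (i) and (ii) make a suitable triple Massey product with a repeated character defined, and derive a contradiction with Theorem~\ref{thm:main vanishing} by showing the hypotheses force the product not to contain $0$. The paper's proof is literally a citation: it follows from Theorem~\ref{thm:main vanishing} together with \cite[Theorem 7.12]{MT1} and \cite[Corollary 3.5]{MT1}.

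The gap is that you invoke \cite[Theorem 7.8]{MT1}, which is the ingredient used for Theorem~\ref{thm:main ob1}, i.e.\ the configuration with three \emph{distinct} indices $i,j,k$; the present theorem concerns the degenerate coefficients $c_{iji}$ and $c_{ijj}$, and the pairing between repeated-entry triple Massey products and these coefficients is exactly the content of \cite[Theorem 7.12]{MT1}, which you never use. The part of your argument that would substitute for it --- checking that with a repeated character the surviving contribution on $r$ is precisely $c_{iji}$ (resp.\ $c_{ijj}$), that the extra vanishing hypotheses $u_{il}=u_{jl}=0$ (which you state but never use) suffice to kill all other contributions from the defining system, and that no Engel-type or Jacobi--Hall cancellation occurs --- is precisely what you defer as ``bookkeeping'' and declare to go through without carrying it out. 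That computation is not routine (it is where the repeated-index case genuinely differs from Theorem~\ref{thm:main ob1}, and it is why MT1 devotes a separate theorem to it), so as written the crucial non-vanishing step is unsupported. The fix is simple: replace the appeal to \cite[Theorem 7.8]{MT1} by \cite[Theorem 7.12]{MT1} (keeping \cite[Corollary 3.5]{MT1}), and take the identification of the relevant Massey products and the non-cancellation statement from there rather than from an unverified Magnus-style calculation.
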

\begin{proof} This follows immediately from Theorem~\ref{thm:main vanishing}, \cite[Theorem 7.12]{MT1} and \cite[Corollary 3.5]{MT1}.
\end{proof}

\section{Vanishing of some higher Massey products}
\label{sec: higher Massey}
\subsection{Massey products}
Let $G$ be a profinite group and $p$ a prime number. We consider the finite field $\F_p$ as  a trivial discrete $G$-module. Let $\sC^\bullet=(C^\bullet(G,\F_p),\partial,\cup)$ be the differential graded algebra of inhomogeneous continuous cochains of $G$ with coefficients in $\F_p$ \cite[Ch.\ I, \S2]{NSW}. We write $H^i(G,\F_p)$ for the corresponding cohomology groups. We denote by $Z^1(G,\F_p)$ the subgroup of $C^1(G,\F_p)$ consisting of all 1-cocycles. Because we use  trivial action on the coefficients $\F_p$, $Z^1(G,\F_p)=H^1(G,\F_p)={\rm Hom}(G,\F_p)$.
 (See \cite{MT1,MT2} and references therein for more general setups.)

Let $n\geq 3$ be an integer. Let $a_1,\ldots,a_n$ be elements in $H^1(G,\F_p)=Z^1(G,\F_p)\subseteq C^1(G,\F_p)$.
\begin{defn}
\label{defn:Massey product}
 A collection $\sM=\{a_{ij}\mid 1\leq i<j\leq n+1, (i,j)\not=(1,n+1)\}$ of elements $a_{ij}$ of $\sC^1(G,\F_p)$ is called a {\it defining system} for the {\it $n$-fold Massey product} $\langle a_1,\ldots,a_n\rangle$ if the following conditions are fulfilled:
\begin{enumerate}
\item $a_{i,i+1}=a_i$ for all $i=1,2\ldots,n$.
\item $\partial a_{ij}= \sum_{l=i+1}^{j-1} a_{il}\cup a_{lj}$ for  all $i+1<j$.
\end{enumerate}
Then $\sum_{k=2}^{n} a_{1k}\cup a_{k,n+1}$ is a $2$-cocycle.
Its  cohomology class in $H^2$  is called the {\it value} of the product relative to the defining system $\sM$,
and is denoted by $\langle a_1,\ldots,a_n\rangle_\sM$.
The {\it $n$-fold Massey product} $\langle a_1,\ldots, a_n\rangle$ itself is the subset of $H^2(G,\F_p)$ consisting of all  elements which can be written in the form $\langle a_1,\ldots, a_n\rangle_\sM$ for some defining system $\sM$.

When $n=3$ we will speak about a {\it triple} Massey product. Note that in this case the triple Massey  product $\langle a_1,a_2,a_3\rangle$ is defined if and only if $a_1\cup a_2=a_2\cup a_3=0$ in $H^2(G,\F_p)$.
\end{defn}

For some convenience, we introduce the following definition.
\begin{defn}
Let $n\geq 1$ be an integer. Let $a_1,\ldots,a_n$ be elements in $H^1(G,\F_p)$.
A collection $\sM=\{a_{ij}\mid 1\leq i<j\leq n+1\}$ of elements $a_{ij}$ of $\sC^1(G,\F_p)$ is called a {\it complete defining system} for the $n$-tuple $(a_1,\ldots,a_n)$ if the following conditions are fulfilled:
\begin{enumerate}
\item $a_{i,i+1}=a_i$ for all $i=1,2\ldots,n$.
\item $\partial a_{ij}= \sum_{l=i+1}^{j-1} a_{il}\cup a_{lj}$ for  all $i+1<j$.
\end{enumerate}
\end{defn}
Note that for $n=1$, $\sM=\{a_{12}:=a_1\}$ is a complete defining system for $(a_1)$. For $n=2$, $(a_1,a_2)$ has a complete defining system if and only if $a_1\cup a_2=0$. For $n\geq 3$, $(a_1,\ldots,a_n)$ has a complete defining system if and only if the $n$-fold Massey product $\langle a_1,\ldots,a_n\rangle$ is defined and contains 0.
\subsection{Vanishing of some higher Massey products}
Let $A$ be a unital commutative ring. Let $n$ be a positive integer. Assume that every integer $1\leq k\leq n$ is invertible in $A$. We have the following binomial polynomials in the ring $A[X]$ of polynomials  in one variable $X$ and with coefficients in $A$:
\[
{X \choose 0}= 1 \text{ and } {X\choose k}= \dfrac{X(X-1)\cdots (X-k+1)}{k!}, 1\leq k\leq n.
\]
We have the following elementary lemma. 
\begin{lem}
\label{lem:binom} Let the notation be as above. 
 For $1\leq k\leq n$, one has
\[ 
{X+Y\choose k} =\sum_{l=0}^k {X\choose l}{Y\choose k-l} \in A[X,Y].
\]
Here $A[X,Y]$ is the ring of polynomials with coefficients $A$ in two variables $X$ and $Y$.
\qed
\end{lem}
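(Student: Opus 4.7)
The plan is to treat this as a version of the classical Vandermonde identity for binomial polynomials and to deduce it in $A[X,Y]$ from the corresponding statement over $\Q[X,Y]$ by a universal-coefficient argument. The hypothesis that $1,2,\ldots,n$ are all invertible in $A$ is exactly what is needed to make each $\binom{X}{k}$ a well-defined element of $A[X]$, so it should also be exactly what makes a universality reduction go through cleanly.

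First I would set up the reduction. Since $k!$ divides $n!$ for $k\le n$ and every element of $\{1,\ldots,n\}$ is invertible in $A$, there is a unique ring homomorphism $\varphi\colon \Z[1/n!]\to A$. The induced map $\Z[1/n!][X,Y]\to A[X,Y]$ sends the binomial polynomial $\binom{X}{k}\in \Z[1/n!][X]$ to the binomial polynomial $\binom{X}{k}\in A[X]$, and similarly for $\binom{Y}{k-l}$ and $\binom{X+Y}{k}$, since these are all defined by the same rational formula with denominator dividing $n!$. Consequently it suffices to verify the asserted identity in $\Z[1/n!][X,Y]$; and because this ring embeds into $\Q[X,Y]$, it is enough to check it in $\Q[X,Y]$.

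Next I would establish the identity in $\Q[X,Y]$ by reducing to the usual combinatorial Vandermonde identity. Both sides are polynomials in $X$ and $Y$ over $\Q$ of total degree at most $k$. For nonnegative integers $m,r\ge 0$, the equality
\[
\binom{m+r}{k}=\sum_{l=0}^{k}\binom{m}{l}\binom{r}{k-l}
\]
is classical and has a one-line combinatorial proof: both sides count the $k$-element subsets of a disjoint union of sets of cardinalities $m$ and $r$, the right-hand side partitioning such subsets according to how many elements they contain from each piece. Since two polynomials in $\Q[X,Y]$ that agree at every point of the Zariski-dense set $\Z_{\ge 0}\times\Z_{\ge 0}\subset\A^2_\Q$ must coincide, the identity holds as a polynomial equality in $\Q[X,Y]$, hence in $\Z[1/n!][X,Y]$, and applying $\varphi$ coefficient-wise transports it to $A[X,Y]$.

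There is no real obstacle here; the identity is elementary and the only subtlety is bookkeeping the fact that $\binom{X}{k}$ makes sense in $A[X]$ precisely because $k!$ is a unit, so that the reduction $A\leftarrow \Z[1/n!]$ is literally a change of coefficients for the same polynomial formula. If one preferred to avoid the density argument, an equally short alternative is induction on $k$, using the Pascal-type relation $\binom{X}{k}=\binom{X-1}{k}+\binom{X-1}{k-1}$ (valid in $A[X]$ by direct calculation) together with the inductive hypothesis applied to $\binom{(X-1)+Y}{k}$ and $\binom{(X-1)+Y}{k-1}$; but the density route seems cleaner and is what I would write.
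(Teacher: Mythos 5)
Your argument is correct. Note, though, that the paper offers no proof at all here: the lemma is stated as elementary and closed with a \qed, so there is nothing to compare against line by line. Your route --- base-changing along the unique ring homomorphism $\Z[1/n!]\to A$ (which exists since each of $1,\dots,n$, hence $n!$, is a unit in $A$), then verifying the identity in $\Q[X,Y]$ by the classical Vandermonde count on $\Z_{\ge 0}\times\Z_{\ge 0}$ and Zariski density --- is sound; the one point worth spelling out is that the binomial polynomials are literally the images of the corresponding polynomials over $\Z[1/n!]$, which you do address. That said, this is more machinery than the statement needs: the Pascal-type induction you mention at the end, using $\binom{X}{k}=\binom{X-1}{k}+\binom{X-1}{k-1}$ directly in $A[X]$, proves the identity in $A[X,Y]$ with no change of coefficients and no density argument, and is probably closer in spirit to what the authors had in mind when they declared the lemma elementary. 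Either write-up would be accepted.
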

\begin{rmk} Let the notation be as above. Then for $1\leq k\leq n$, one also has
\[
\frac{(X+Y)^k}{k!} =\sum_{l=0}^k \frac{X^l}{l!}\frac{Y^{k-l}}{(k-l)!} \in A[X,Y].
\]
And all results presented below can be stated and proved by using $\dfrac{X^k}{k!}$ (with some obvious modifications) instead of using $\displaystyle {X \choose k}$.
\QEDB
\end{rmk}

For any element $\chi$ in $H^1(G,\F_p)=Z^1(G,\F_p)={\rm Hom}(G,\F_p)$ and for each $k=0,\ldots,p-1$, we define $\displaystyle {\chi \choose k} \in C^1(G,\F_p)$ by
\[
{\chi \choose k}(\sigma)= {\chi(\sigma) \choose k},\; \forall \sigma\in G.
\]
\begin{cor}
\label{cor:d of binom}
Let $\chi$ be an element in ${\rm Hom}(G,\F_p)$. Let $k$ be an integer with $1\leq k<p$. Then
\[
d {\chi\choose k} = -\sum_{l=1}^{k-1} {\chi \choose l}\cup {\chi \choose k-l} \in C^2(G,\F_p).
\]
\end{cor}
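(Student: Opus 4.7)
The plan is to unwind both sides at a pair $(\sigma,\tau)\in G\times G$ and apply Lemma~\ref{lem:binom} pointwise. Since $\F_p$ has trivial $G$-action and $\chi\colon G\to\F_p$ is a homomorphism, the coboundary of a $1$-cochain $f$ is
\[
(df)(\sigma,\tau)=f(\sigma)+f(\tau)-f(\sigma\tau),
\]
and the cup product of two $1$-cochains satisfies $(f\cup g)(\sigma,\tau)=f(\sigma)g(\tau)$. In particular, because $\chi(\sigma\tau)=\chi(\sigma)+\chi(\tau)$,
\[
\Bigl(d\tbinom{\chi}{k}\Bigr)(\sigma,\tau)=\binom{\chi(\sigma)}{k}+\binom{\chi(\tau)}{k}-\binom{\chi(\sigma)+\chi(\tau)}{k}.
\]

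Next, I would invoke Lemma~\ref{lem:binom} with $X=\chi(\sigma)$ and $Y=\chi(\tau)$, which is legitimate since $1\le k<p$ so that $1,\dots,k$ are invertible in $\F_p$. Extracting the boundary terms $l=0$ and $l=k$ from the sum gives
\[
\binom{\chi(\sigma)+\chi(\tau)}{k}=\binom{\chi(\tau)}{k}+\binom{\chi(\sigma)}{k}+\sum_{l=1}^{k-1}\binom{\chi(\sigma)}{l}\binom{\chi(\tau)}{k-l}.
\]
Substituting this back into the previous display, the two ``edge'' terms cancel, leaving
\[
\Bigl(d\tbinom{\chi}{k}\Bigr)(\sigma,\tau)=-\sum_{l=1}^{k-1}\binom{\chi(\sigma)}{l}\binom{\chi(\tau)}{k-l}=-\sum_{l=1}^{k-1}\Bigl(\tbinom{\chi}{l}\cup\tbinom{\chi}{k-l}\Bigr)(\sigma,\tau),
\]
which is the asserted identity in $C^2(G,\F_p)$.

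There is essentially no obstacle beyond bookkeeping: the only points to verify are that $\chi$ is multiplicative in the appropriate additive sense (which is the hypothesis $\chi\in\mathrm{Hom}(G,\F_p)$), and that the binomial coefficients $\binom{m}{l}$ for $0\le l\le k<p$ make sense in $\F_p$ so that Lemma~\ref{lem:binom} applies. Both are immediate from the setup.
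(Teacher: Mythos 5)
Your proof is correct and follows essentially the same route as the paper: evaluate $d\binom{\chi}{k}$ at $(\sigma,\tau)$, use that $\chi$ is a homomorphism to write $\chi(\sigma\tau)=\chi(\sigma)+\chi(\tau)$, and apply the Vandermonde identity of Lemma~\ref{lem:binom}, with the $l=0$ and $l=k$ terms cancelling the edge terms. No gaps; the verification of the cochain conventions and the invertibility of $1,\dots,k$ in $\F_p$ is exactly as in the paper.
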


\begin{proof} By Lemma~\ref{lem:binom}, we have
\[
\begin{aligned}
d {\chi\choose k}(\sigma,\tau)&= {\chi\choose k}(\sigma) + {\chi\choose k}(\tau) - {\chi\choose k}(\sigma\tau)\\
&= {\chi(\sigma)\choose k} + {\chi(\tau) \choose k} - {\chi(\sigma\tau)\choose k}\\
&= {\chi(\sigma)\choose k} + {\chi(\tau) \choose k} - {\chi(\sigma)+\chi(\tau)\choose k}\\
&= {\chi(\sigma)\choose k} + {\chi(\tau) \choose k} - \sum_{l=0}^{k} {\chi(\sigma) \choose l}{\chi(\tau) \choose k-l}\\
&= -\sum_{l=1}^{k-1} {\chi \choose l}\cup {\chi \choose k-l}(\sigma,\tau), \;\forall \sigma,\tau\in G,
\end{aligned}
\]
as desired.
\end{proof}
\begin{cor}
\label{cor:aaaa}
 Let $\chi$ be an element in $H^1(G,\F_p)={\rm Hom}(G,\F_p)$. Let $k<p$ be a positive integer. Then the system 
\[
\sM =\left\{ -{\chi \choose j-i}\mid 1\leq i<j\leq k+1\right \}
\]
is a complete defining system for the $n$-tuple $(-\chi,-\chi,\ldots,-\chi)$ ($k$ copies of $-\chi$).
\end{cor}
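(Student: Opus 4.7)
The plan is to verify the two axioms in the definition of a complete defining system by a direct computation, leveraging Corollary~\ref{cor:d of binom} as the essential ingredient. There is essentially no conceptual obstacle here; the work is entirely in matching indices.

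First I would check axiom (1). Setting $j = i+1$ in the prescription $a_{ij} = -\binom{\chi}{j-i}$ gives $a_{i,i+1} = -\binom{\chi}{1}$. Since $\binom{\chi}{1}(\sigma) = \binom{\chi(\sigma)}{1} = \chi(\sigma)$ for all $\sigma \in G$, we have $\binom{\chi}{1} = \chi$ in $C^1(G, \F_p)$, so $a_{i,i+1} = -\chi$, as required.

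Next I would verify axiom (2). Fix $1 \le i < j \le k+1$ with $j - i \ge 2$, and set $m := j - i$, so $2 \le m \le k < p$. Then $\partial a_{ij} = -\partial\binom{\chi}{m}$, which by Corollary~\ref{cor:d of binom} equals
\[
\sum_{l=1}^{m-1} \binom{\chi}{l} \cup \binom{\chi}{m-l}.
\]
On the other hand, the right-hand side of axiom (2) is
\[
\sum_{\ell = i+1}^{j-1} a_{i\ell} \cup a_{\ell j} = \sum_{\ell = i+1}^{j-1} \left(-\binom{\chi}{\ell - i}\right) \cup \left(-\binom{\chi}{j - \ell}\right) = \sum_{\ell = i+1}^{j-1} \binom{\chi}{\ell - i} \cup \binom{\chi}{j - \ell}.
\]
Substituting $l = \ell - i$, so that $j - \ell = m - l$, this becomes exactly $\sum_{l=1}^{m-1} \binom{\chi}{l} \cup \binom{\chi}{m-l}$, which matches the computation of $\partial a_{ij}$ above.

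Both axioms being verified, $\mathcal{M}$ is a complete defining system for $(-\chi, \ldots, -\chi)$. The only subtle point is the hypothesis $k < p$, which is needed so that Corollary~\ref{cor:d of binom} applies at every internal step $m \le k$; without it the binomial coefficients $\binom{\chi(\sigma)}{m}$ would fail to be well defined in $\F_p$. No further work is needed.
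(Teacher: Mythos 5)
Your proof is correct and is exactly the computation that the paper compresses into its one-line proof ("this follows immediately from Corollary~\ref{cor:d of binom}"): the sign cancellation and the index substitution $l=\ell-i$ are the whole content. No issues.
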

\begin{proof} This follows immediately from Corollary~\ref{cor:d of binom}.
\end{proof}

\begin{prop}
\label{prop:baaa}
  Let $F$ be a field containing a primitive $p$-th root of unity. Let $a$ and $b$ be elements which are linearly independent modulo $(F^\times)^p$. Let $k<p$ be a positive integer. Assume that $\chi_a\cup\chi_b=0$. Then the $(k+1)$-fold Massey product $\langle -\chi_b, -\chi_a,\ldots,-\chi_a\rangle$ ($k$ copies of $-\chi_a$) is defined and has a complete defining system of the form $\sM=\{a_{ij}\in C^1(G,\F_p)\mid 1\leq i<j\leq k+2\}$, where
\[
a_{ij}=-{\chi_a \choose j-i}, \;\text{ for all } 2\leq i<j\leq k+2.
\]
\end{prop}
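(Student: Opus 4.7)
My plan is to prove the proposition by iteratively constructing the cochains $a_{1j}$ for $j=2,3,\ldots,k+2$, while the remaining cochains $a_{ij}$ for $2 \leq i < j \leq k+2$ are prescribed to be $-{\chi_a \choose j-i}$, which already form a complete defining system for $(-\chi_a,\ldots,-\chi_a)$ by Corollary~\ref{cor:aaaa}. I would start with $a_{12}:=-\chi_b$ and then, at each stage $j\geq 3$, produce $a_{1j}\in C^1(G_F,\F_p)$ satisfying $\partial a_{1j}=X_j$, where $X_j:=\sum_{l=2}^{j-1} a_{1l}\cup a_{lj}$. Associativity of the cup product together with the defining relations already in place at stages $<j$ yields $\partial X_j=0$ via a standard reshuffling of summations over pairs $\{2\leq m<l\leq j-1\}$, so $X_j\in Z^2(G_F,\F_p)$.

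The crux will be to force $X_j$ to be a coboundary, which I accomplish by allowing a cocycle modification of the most recently constructed $a_{1,j-1}$. The key observation is that for each $l\geq 2$ the cochain $a_{lj}=-{\chi_a\choose j-l}$ vanishes identically on $G_{F(\sqrt[p]{a})}=\ker\chi_a$, since ${0\choose m}=0$ for $m\geq 1$. Hence $X_j$ restricted to $G_{F(\sqrt[p]{a})}$ vanishes as a $2$-cochain, and in particular $\res_{\ker\chi_a}[X_j]=0$. Applying the injection $i\colon H^2(G_F,\F_p)\hookrightarrow H^2(G_F,F_s^\times)$ together with Proposition~\ref{prop:killed by p-cyclic extension} then gives $i([X_j])=(a,y)$ for some $y\in F^\times$, and injectivity of $i$ yields $[X_j]=\chi_a\cup\chi_y$. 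Replacing $a_{1,j-1}$ by $a_{1,j-1}-\chi_y$ is permissible because $\chi_y\in Z^1(G_F,\F_p)$, so the lower-stage relation $\partial a_{1,j-1}=X_{j-1}$ is preserved. This replacement shifts $X_j$ by $(-\chi_y)\cup(-\chi_a)=\chi_y\cup\chi_a$, whose cohomology class is $-\chi_a\cup\chi_y$, exactly cancelling the offending class. The new $X_j$ is then a coboundary, and any choice of $a_{1j}$ with $\partial a_{1j}$ equal to it completes the stage.

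I expect the main technical obstacle to be the bookkeeping: verifying that each cocycle modification of $a_{1,j-1}$ at stage $j$ leaves the earlier relations intact and that the sequential modifications across stages compose coherently (they do, since we are only ever adding $1$-cocycles to already-chosen cochains, which does not alter any $\partial a_{1,l}$). The substantive mathematical input is the combination of the restriction-vanishing on $\ker\chi_a$---a consequence of the specific shape $-{\chi_a\choose j-l}$ of the lower entries---with Proposition~\ref{prop:killed by p-cyclic extension}, which together force every obstruction class to have exactly the form $\chi_a\cup\chi_y$ that can be absorbed into the freedom in choosing the previous $a_{1,j-1}$.
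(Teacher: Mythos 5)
Your proposal is correct and is essentially the paper's own argument: at each stage the obstruction cocycle restricts to zero on $\ker\chi_a$ because every entry $-\binom{\chi_a}{m}$ ($m\geq 1$) vanishes there, Proposition~\ref{prop:killed by p-cyclic extension} together with the injection $i$ identifies its class as $\chi_a\cup\chi_y$, and this class is absorbed by adding a $1$-cocycle to the cochain that gets cupped with $-\chi_a$, exactly as in the paper's induction. The one detail to add is that at the first stage $j=3$ you are not allowed to modify $a_{12}=-\chi_b$ (it is part of the data of the defining system), but no modification is needed there since $[X_3]=[\chi_b\cup\chi_a]=0$ by hypothesis, which is precisely the paper's base case.
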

\begin{proof}
By Corollary~\ref{cor:aaaa}, the system $\{a_{ij}:=-{\chi_a \choose j-i}\mid 2\leq i<j\leq k+2\}$ is a complete defining system for the $k$-tuple $(-\chi_a,\ldots,-\chi_a)$. We set $a_{1,2}=-\chi_b$. We shall prove by induction on $j=3,4,\ldots,k+2$ that there exist $a_{13},a_{14},\ldots,a_{1j}\in C^1(G_F,\F_p)$ such that
\[
d a_{1 r}= \sum_{l=2}^{r-1} a_{2l}\cup a_{l r}, \forall r= 3,4,\ldots,j.
\]
Then this will imply immediately that the system $\{ a_{ij}\mid 1\leq i<j\leq k+2\}$ is a complete defining sytem for the $(k+1)$-tuple $(-\chi_b,-\chi_a,\ldots,-\chi_a)$.

If $j=3$, then since $\chi_a\cup\chi_b=0$, there exists $\varphi_{ba}\in C^1(G_F,\F_p)$ such that $d \varphi_{ba}=\chi_b\cup \chi_a$. We set $a_{1,3}:=\varphi_{ba}$. Then 
\[
d a_{1,3}= \chi_b\cup\chi_a= a_{1,2}\cup a_{2,3}.
\]

Now assume that $j>3$. By induction hypothesis there exist $b_{13},b_{14},\ldots,b_{1j}\in C^1(G_F,\F_p)$ such that
\[
d b_{1 r}= \sum_{l=2}^{r-1} a_{1l}\cup a_{l r}, \forall r= 3,4,\ldots,j.
\]

Then the system
\[
\sN:=\{a_{12}, b_{13},\ldots, b_{1j}, \text{ and }a_{\mu\nu} \text{ with } 2\leq \mu<\nu \leq j+1\}
\]
is a defining system  for the $j$-fold Massey product $\langle -\chi_b,-\chi_a,\ldots,-\chi_a\rangle$. Then the value of this Massey product with respect to $\sN$ is
\[
\begin{aligned}
\langle \chi_b,\chi_a,\ldots,\chi_a\rangle_{\sN}&= a_{12}\cup a_{2,j+1}+b_{13}\cup a_{3,j+1}+\cdots+b_{1j}\cup a_{j,j+1}\\
&=-\chi_b\cup -{\chi_a\choose j-1} + b_{13}\cup -{\chi_a\choose j-2}+\cdots+b_{1j}\cup -{\chi_a\choose 1}.
\end{aligned}
\]

For the ease of notation, we denote $\res_{\ker\chi_a}$ just by $\res_a$. Then we have
\[
\begin{aligned}
&\res_a(\langle-\chi_b, -\chi_a,\ldots,-\chi_a\rangle_{\sN})\\
&= \res_a(-\chi_b)\cup \res_a(-{\chi_a\choose j-1}) + \res_a(b_{13})\cup \res_a(-{\chi_a\choose j-2}) + \cdots + \res_a(b_{1j}) \cup \res_a(-{\chi_a\choose 1}) \\
&= \res_a(-\chi_b)\cup 0 + \res_a(b_{13})\cup 0 + \cdots + \res_a(b_{1j}) \cup 0=0.
\end{aligned}
\]
Then by Proposition \ref{prop:killed by p-cyclic extension}, there exists $x\in F^\times$ such that
\[
-\chi_b\cup -{\chi_a\choose j-1} + b_{13}\cup -{\chi_a\choose j-2}+\cdots+b_{1j}\cup -{\chi_a\choose 1}= -\chi_x\cup -{\chi_a\choose 1} \in H^2(G,\F_p).
\]
Hence there exists $a_{1,j+1}$ in $C^1(G,\F_p)$ such that
\[
\chi_b\cup -{\chi_a\choose j-1} + b_{13}\cup -{\chi_a\choose j-2}+\cdots+b_{1,j-1}\cup -{\chi_a\choose 2}+ (b_{1j}+\chi_x)\cup-{\chi_a\choose 1}=d a_{1,j+1}.
\]
We set $a_{1r}:=b_{1r}$ for $r=3,4,\ldots,j-1$, and $a_{1j}:= b_{1j}+\chi_x$. Then we have 
\[
d a_{1 r}= d b_{1r}=\sum_{l=2}^{r-1} a_{1l}\cup a_{l r}, \forall r= 3,4,\ldots,j,
\]
and 
\[
d a_{1,j+1 }= \sum_{l=2}^{j} a_{1l}\cup a_{l ,j+1},
\]
as desired.
\end{proof}
\begin{prop}
\label{prop:abaa}
 Let $F$ be a field containing a primitive $p$-th root of unity. Let $a$ and $b$ be elements which are linearly independent modulo $(F^\times)^p$. Let $k<p$ be a positive integer. Assume that $\chi_a\cup\chi_b=0$. Then the $(k+2)$-fold Massey product $\langle -\chi_a,-\chi_b, -\chi_a,\ldots,-\chi_a\rangle$ ($k+1$ copies of $-\chi_a$) is defined and has a complete defining system of the form $\sM=\{a_{ij}\in C^1(G,\F_p)\mid 1\leq i<j\leq k+3\}$, where
\[
a_{ij}=-{\chi_a \choose j-i}, \;\text{ for all } 3\leq i<j\leq k+3.
\]
\end{prop}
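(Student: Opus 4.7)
The plan is to mimic the inductive argument of Proposition~\ref{prop:baaa}, constructing the required complete defining system in three stages. First I set $a_{12} := -\chi_a$, $a_{23} := -\chi_b$, and $a_{ij} := -{\chi_a \choose j-i}$ for $3 \le i < j \le k+3$. By Corollary~\ref{cor:aaaa} (with indices shifted up by two), the subfamily indexed by $3 \le i < j \le k+3$ already forms a complete defining system for the $(k+1)$-tuple $(-\chi_a, \ldots, -\chi_a)$. All binomials occurring here have $j-i \le k < p$ and are therefore well-defined.

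Next I construct $a_{2j}$ for $j = 4, \ldots, k+3$ by induction on $j$, following the argument of Proposition~\ref{prop:baaa} almost verbatim. The base $a_{24}$ exists because $a_{23} \cup a_{34} = \chi_b \cup \chi_a$ vanishes in $H^2$ by hypothesis. For the inductive step, the key point is that in the $2$-cocycle $\Omega_j := \sum_{l=3}^{j} a_{2l} \cup a_{l, j+1}$ every summand contains a factor $-{\chi_a \choose m}$ with $m \ge 1$, which restricts to zero on $G_{F(\sqrt[p]{a})}$. Hence $\res_{\ker\chi_a}(i([\Omega_j])) = 0$, and Proposition~\ref{prop:killed by p-cyclic extension} produces $x \in F^\times$ with $[\Omega_j] = \chi_x \cup \chi_a$ in $H^2(G_F, \F_p)$. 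Replacing $a_{2j}$ by $a_{2j} + \chi_x$ (which preserves the earlier Massey relation, since $\chi_x$ is a cocycle) changes $\Omega_j$ by $\chi_x \cup (-\chi_a)$, making the modified sum a coboundary whose primitive is $a_{2, j+1}$.

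Finally I construct $a_{1j}$ for $j = 3, \ldots, k+3$ by the parallel induction. The base $a_{13}$ exists because $a_{12} \cup a_{23} = \chi_a \cup \chi_b = 0$ in $H^2$. For the inductive step the same restriction trick applies to the $2$-cocycle $\sum_{l=2}^{j} a_{1l} \cup a_{l, j+1}$: the summand for $l = 2$ contains the factor $a_{12} = -\chi_a$, while every summand with $l \ge 3$ contains the factor $a_{l, j+1} = -{\chi_a \choose j+1-l}$ with $j+1-l \ge 1$, so the whole sum restricts to zero on $G_{F(\sqrt[p]{a})}$. Proposition~\ref{prop:killed by p-cyclic extension} again produces $y \in F^\times$ such that the corresponding class equals $\chi_y \cup \chi_a$, and replacing $a_{1j}$ by $a_{1j} + \chi_y$ completes the step exactly as before.

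The main obstacle is the bookkeeping in each inductive step: one must verify that modifying the penultimate cochain at stage $j$ by a $1$-cocycle neither disturbs the previously built relations (it does not, since $\partial \chi_\ast = 0$ and the modified cochain only appears in the single cup product $a_{\ast j} \cup a_{j, j+1}$) nor the structure of the binomial tail (it does not, since only the first two rows are touched). The constraint $k < p$ is used crucially to keep every binomial ${\chi_a \choose m}$ that occurs within the range $m \le k < p$ where it is defined, and it is precisely the shape of the tail given by Corollary~\ref{cor:aaaa} that enables the restriction-to-$\ker\chi_a$ argument in every inductive step.
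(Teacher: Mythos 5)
Your proof is correct and takes essentially the same route as the paper's: the paper obtains the rows $2\le i<j\le k+3$ by simply citing Proposition~\ref{prop:baaa} (you re-run that induction inline), and then builds the first row exactly as you do, by restricting the obstruction cocycle to $\ker\chi_a$, applying Proposition~\ref{prop:killed by p-cyclic extension} to write its class in the form $\chi_x\cup\chi_a$, and absorbing $\chi_x$ into the penultimate cochain. The only slip is harmless bookkeeping: the subfamily indexed by $3\le i<j\le k+3$ is a complete defining system for the $k$-tuple (not the $(k+1)$-tuple) of copies of $-\chi_a$.
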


\begin{proof}
By Proposition~\ref{prop:baaa}, there exists a system $\{a_{ij}\in C^1(G,\F_p)\mid 2\leq i<j\leq k+3\}$ such that it is a complete defining system for the $k+1$-tuple $(-\chi_b,-\chi_a,\ldots,-\chi_a)$ and that
\[
a_{ij}=-{\chi_a \choose j-i}, \;\text{ for all } 3\leq i<j\leq k+3.
\]
 We set $a_{1,2}=-\chi_a$. We shall prove by induction on $j=3,4,\ldots,k+3$ that there exist $a_{13},a_{14},\ldots,a_{1j}\in C^1(G_F,\F_p)$ such that
\[
d a_{1 r}= \sum_{l=2}^{r-1} a_{2l}\cup a_{l r}, \forall r= 3,4,\ldots,j.
\]
Then this will imply immediately that the system $\{ a_{ij}\mid 1\leq i<j\leq k+3\}$ is a complete defining system for the $(k+2)$-tuple $(-\chi_a,-\chi_b,-\chi_a,\ldots,-\chi_a)$.

If $j=3$, then since $\chi_a\cup\chi_b=0$, there exists $\varphi_{ab}\in C^1(G_F,\F_p)$ such that $d \varphi_{ab}=\chi_a\cup \chi_b$. We set $a_{1,3}:=\varphi_{ab}$. Then 
\[
d a_{1,3}= \chi_a\cup\chi_b= a_{1,2}\cup a_{2,3}.
\]

Now assume that $j>3$. By induction hypothesis there exist $b_{13},b_{14},\ldots,b_{1j}\in C^1(G_F,\F_p)$ such that
\[
d b_{1 r}= \sum_{l=2}^{r-1} a_{1l}\cup a_{l r}, \forall r= 3,4,\ldots,j.
\]

Then the system
\[
\sN:=\{a_{12}, b_{13},\ldots, b_{1j}, \text{ and }a_{\mu\nu} \text{ with } 2\leq \mu<\nu \leq j+1\}
\]
is a defining system  for the $j$-fold Massey product $\langle -\chi_b,-\chi_a,\ldots,-\chi_a\rangle$. Then the value of this Massey product with respect to $\sN$ is
\[
\begin{aligned}
&\langle -\chi_a,-\chi_b,-\chi_a,\ldots,-\chi_a\rangle_{\sN}\\
&= a_{12}\cup a_{2,j+1}+b_{13}\cup a_{3,j+1}+\cdots+b_{1j}\cup a_{j,j+1}\\
&=-\chi_a\cup a_{2,j+1}+b_{13}\cup -{\chi_a\choose j-1} + b_{13}\cup -{\chi_a\choose j-2}+\cdots+b_{1j}\cup -{\chi_a\choose 1}.
\end{aligned}
\]

For the ease of notation, we denote $\res_{\ker\chi_a}$ just by $\res_a$. Then we have
\[
\begin{aligned}
&\res_a(\langle-\chi_a, -\chi_b, -\chi_a,\ldots,-\chi_a\rangle_{\sN})\\
&= \res_a(-\chi_a)\cup \res_a(a_{2,j+1})+ \res_a(b_{13})\cup \res_a(-{\chi_a\choose j-2}) + \cdots + \res_a(b_{1j}) \cup \res_a(-{\chi_a\choose 1}) \\
&= 0\cup \res_a(a_{2,j+1}) + \res_a(b_{13})\cup 0 + \cdots + \res_a(b_{1j}) \cup 0=0.
\end{aligned}
\]
By Proposition \ref{prop:killed by p-cyclic extension}, there exists $x\in F^\times$ such that
\[
-\chi_a\cup a_{2,j+1}+ b_{13}\cup -{\chi_a\choose j-2}+\cdots+b_{1j}\cup -{\chi_a\choose 1}= -\chi_x\cup -{\chi_a\choose 1} \in H^2(G,\F_p).
\]
Hence there exists $a_{1,j+1}$ in $C^1(G,\F_p)$ such that
\[
-\chi_a\cup a_{2,j+1} + b_{13}\cup -{\chi_a\choose j-2}+\cdots+b_{1,j-1}\cup -{\chi_a\choose 2}+ (b_{1j}+\chi_x)\cup-{\chi_a\choose 1}=d a_{1,j+1}.
\]
We set $a_{1r}:=b_{1r}$ for $r=3,4,\ldots,j-1$, and $a_{1j}:= b_{1j}+\chi_x$. Then we have 
\[
d a_{1 r}= d b_{1r}=\sum_{l=2}^{r-1} a_{1l}\cup a_{l r}, \forall r= 3,4,\ldots,j,
\]
and 
\[
d a_{1,j+1 }= \sum_{l=2}^{j} a_{1l}\cup a_{l ,j+1},
\]
as desired.
\end{proof}

\begin{thm}
\label{thm:vanishing higher}
 Let $F$ be a field containing a primitive $p$-th root of unity. Let $a$ and $b$ be elements which are linearly independent modulo $(F^\times)^p$. Let $k<p$ be a positive integer. Assume that $\chi_a\cup\chi_b=0$. Then we have
\begin{enumerate}
\item The $(k+1)$-fold Massey products $\langle \chi_b, \chi_a,\ldots,\chi_a\rangle$ and $\langle \chi_a,\ldots,\chi_a,\chi_b\rangle$ ($k$ copies of $\chi_a$) are defined and contain 0.
\item The $(k+2)$-fold Massey products $\langle \chi_a,\chi_b, \chi_a,\ldots,\chi_a\rangle$ and $\langle \chi_a,\ldots,\chi_a,\chi_b,\chi_a\rangle$ ($k+1$ copies of $\chi_a$) are defined and contain 0.
\end{enumerate}
\end{thm}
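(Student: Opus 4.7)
The plan is to deduce the theorem as a formal consequence of Propositions~\ref{prop:baaa} and \ref{prop:abaa}, using two elementary manipulations of defining systems: sign rescaling and reversal. By the very definition of a complete defining system, the existence of such a system for an $n$-tuple $(c_1,\dots,c_n)$ includes a cochain $a_{1,n+1}$ with $\partial a_{1,n+1}=\sum_{l=2}^{n}a_{1l}\cup a_{l,n+1}$, so the value of the Massey product with respect to it is zero in $H^2$. Thus Propositions~\ref{prop:baaa} and \ref{prop:abaa} directly yield $0\in\langle-\chi_b,-\chi_a,\dots,-\chi_a\rangle$ and $0\in\langle-\chi_a,-\chi_b,-\chi_a,\dots,-\chi_a\rangle$ respectively.

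Next I pass from the signed to the unsigned versions by multilinearity. Given a complete defining system $\{a_{ij}\}$ for $(c_1,\dots,c_n)$ and scalars $\lambda_1,\dots,\lambda_n\in\F_p^\times$, the rescaled collection $\tilde a_{ij}:=\lambda_i\lambda_{i+1}\cdots\lambda_{j-1}\,a_{ij}$ is a complete defining system for $(\lambda_1 c_1,\dots,\lambda_n c_n)$, since both $\partial\tilde a_{ij}$ and $\sum_l\tilde a_{il}\cup\tilde a_{lj}$ equal $\lambda_i\cdots\lambda_{j-1}\sum_l a_{il}\cup a_{lj}$. Taking $\lambda_1=\cdots=\lambda_n=-1$ gives $0\in\langle\chi_b,\chi_a,\dots,\chi_a\rangle$ and $0\in\langle\chi_a,\chi_b,\chi_a,\dots,\chi_a\rangle$, i.e., the ``forward'' half of (1) and (2).

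For the reversed Massey products $\langle\chi_a,\dots,\chi_a,\chi_b\rangle$ and $\langle\chi_a,\dots,\chi_a,\chi_b,\chi_a\rangle$ I will invoke the reversal symmetry for Massey products of one-dimensional classes, which implies $\langle a_n,\dots,a_1\rangle=\pm\langle a_1,\dots,a_n\rangle$ as subsets of $H^2(G,\F_p)$; containing $0$ is preserved under multiplication by $\pm 1$. A self-contained alternative is to mirror the inductions in Propositions~\ref{prop:baaa} and \ref{prop:abaa}: construct $a_{i,k+2}$ (respectively $a_{i,k+3}$) downward in $i$ rather than $a_{1,j}$ upward in $j$, using the same two inputs, namely $\res_{\ker\chi_a}\binom{\chi_a}{m}=0$ for $m\geq 1$ and Proposition~\ref{prop:killed by p-cyclic extension}. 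The substantive work has already been done in the preceding propositions; the only (minor) obstacle is the bookkeeping of checking that reversal preserves the property of containing $0$, which is immediate since the incurred sign is $\pm 1$ and $0=-0$ in $\F_p$.
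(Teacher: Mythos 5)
Your proposal is correct and takes essentially the same route as the paper: the substantive content is Propositions~\ref{prop:baaa} and \ref{prop:abaa} (whose complete defining systems witness vanishing), and the reversed products are handled by Kraines' symmetry $\langle a_1,\ldots,a_n\rangle=\pm\langle a_n,\ldots,a_1\rangle$, exactly as in the paper. The only (harmless) difference is how the signs are removed: you rescale the defining system by $-1$'s, while the paper simply observes that $-\chi_a=\chi_{a^{-1}}$ and applies the propositions with $a^{-1},b^{-1}$ in place of $a,b$.
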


\begin{proof}
We recall the following formal property of Massey products. If $\langle a_1,a_2,\ldots,a_n\rangle$ is defined, then $\langle a_n,a_{n-1},\ldots,a_1\rangle$ is defined and 
\[
\langle a_1,a_2,\ldots,a_n\rangle =\pm \langle a_n,a_{n-1},\ldots,a_1\rangle.
\]
(See \cite[Theorem 8]{Kra}.) Observe also that $-\chi_a=\chi_{a^{-1}}$ for every $a\in F^\times$.
The statement then follows from the two previous  propositions.
\end{proof}

\end{document}